\title[]
{Twisted and singular gravitating vortices}
\author[C.-J. Yao]{Chengjian Yao}
\address{ Institute of Mathematical Sciences, ShanghaiTech University, 393 Middle Huaxia Road, Pudong, 
	Shanghai, 201210 China.}
\email{yaochj@shanghaitech.edu.cn}
\def\YYint#1#2#3{{\setbox0=\hbox{$#1{#2#3}{\int}$}
		\vcenter{\hbox{$#2#3$}}\kern-.52\wd0}}
\theoremstyle{plain}
\newtheorem{theorem}{Theorem}[section]
\newtheorem{lemma}[theorem]{Lemma}
\newtheorem{proposition}[theorem]{Proposition}
\newtheorem*{theorem*}{Theorem}
\theoremstyle{definition}
\newtheorem{definition}[theorem]{Definition}
\newtheorem{definition-theorem}[theorem]{Definition-Theorem}
\theoremstyle{remark}
\newtheorem{remark}[theorem]{Remark}
\numberwithin{equation}{section} \setcounter{tocdepth}{1}
\newcommand{\tr}{\operatorname{tr}}
\newcommand{\Hom}{\operatorname{Hom}}
\newcommand{\surj}{\to\kern-1.8ex\to}
\renewcommand{\Im}{\operatorname{Im}}
\DeclareMathOperator{\osc}{Osc}
\begin{document}
	
	\begin{abstract}
	We introduce the notion of twisted gravitating vortex on a compact Riemann surface. If the genus of the Riemann surface is greater than $1$ and the twisting forms have suitable signs, we prove an existence and uniqueness result for suitable range of the coupling constant generalizing the result of \cite{Al-Ga-Ga-P} in the non twisted setting. It is proved via solving a continuity path deforming the coupling constant from $0$ for which the system decouples as twisted K\"ahler-Einstein metric and twisted vortices. Moreover, specializing to a family of twisting forms smoothing delta distribution terms, we prove the existence of singular gravitating vortices whose K\"ahler metric has conical singularities and Hermitian metric has parabolic singularities. In the Bogomol'nyi phase, we establish an existence result for singular Einstein-Bogomol'nyi equations, which represents cosmic strings with singularities.
	\end{abstract}
	
	\maketitle
	
	\setlength{\parskip}{5pt}
	\setlength{\parindent}{0pt}
	

\section{Introduction}	

The Abelian Higgs model coupled with gravity/or equivalently the \emph{Einstein-Maxwell-Higgs} theory in four dimension is the simplest theory when matter interacts with spacetime according to Einstein's Field Equation and the matter field is described by a gauge theory on this spacetime simultaneously.  Let $g$ be a Lorenztian metric with signature $(-,+,+,+)$ on a four-manifold $X$. 
The Einstein equation coupled with the Abelian Higgs model \cite{Yang} are 

\[
\left\{
\begin{array}{rll}
\text{Ric } g - \frac{1}{2}R g 
& = &
-4\pi G T\\
D_A^*D_A\bm\phi
& = &
\frac{1}{2}(|\bm\phi|^2-\tau)\bm \phi\\
D_A^*F_A
& = & 
= \frac{\sqrt{-1}}{2} \left( \langle \bm \phi, D_A\bm\phi\rangle - \langle D_A\bm\phi, \bm\phi\rangle \right)
\end{array}
\right.
\]
where the \emph{energy-momentum tensor} $T$ is given by 
\[
T_{\mu\nu}
=
g^{\mu'\nu'}F_{\mu\mu'}F_{\nu\nu'} 
+ 
\frac{1}{2}\left(  
[D_\mu\bm\phi][D_\nu\bm\phi]^* + [D_\mu\bm\phi]^*[D_\nu\bm\phi]
\right)
-
g_{\mu\nu}\mathcal{L}
\]
and $\mathcal{L}$ is the Abelian Higgs action density 
\[
\mathcal{L}
=
\frac{1}{4}|F_A|^2 + \frac{1}{2} |D_A\bm\phi|^2 + \frac{1}{8}(|\bm\phi|^2-\lambda)^2. 
\]
In this model, the matter and energy encoded in the energy-momentum tensor are generated by an $U(1)$ connection together with a cross-section $\bm\phi$ of this $U(1)$-bundle breaking the symmetry.

Following the work of \cite{CG, Yang}, the system is equivalent to the following Einstein-Bogomol'nyi equations (also known as self-dual Einstein-Maxwell-Higgs equations in the literature \cite{Ch, HS0, HS}, the solutions are known as cosmic strings):

\[
\left\{
\begin{array}{rll}
K_g - 4\pi G \mathcal{E}
& = &
0\\
D_j\bm\phi \pm \sqrt{-1} \varepsilon^k_{\phantom{k}j} D_k\bm\phi
& = &
0\\
\varepsilon^{jk}F_{jk} 
\pm
(|\bm\phi|^2 - \lambda)
& = & 
0
\end{array}
\right.
\]
in the case when $X=\mathbb{R}^{1,1}\times \Sigma$ and $g$  is a product of the flat metric on $\mathbb{R}^{1,1}$ with a metric $g$ on a surface $\Sigma$ and $L$, $A$, $\bm\phi$ are pulled back from the factor $\Sigma$. Viewing $g$ as a K\"ahler metric with respect to the complex structure determined by $g$, the second equation is translated as a holomorphicity condition on $\bm\phi$ and the system fits into the following more general system, i.e. the \emph{gravitating vortex} equations:

\begin{equation}\label{eq:smoothGV}
\left\{
\begin{array}{rcl}
i F_h + \frac{1}{2}(|\bm\phi|^2_h - \tau)\omega & = &0\\
\text{Ric } \omega - 2\alpha i\partial\bar\partial |\bm\phi|_h^2 +\alpha\tau (|\bm\phi|_h^2 -\tau)\omega & =&  c \omega
\end{array}
\right.
\end{equation}
introduced in \cite{Al-Ga-Ga2} which is a dimensional reduction of the K\"ahler-Yang-Mills equation defined in \cite{Al-Ga-Ga1}. Here, the unknowns are the pair of K\"ahler metric $\omega$ on $\Sigma$ and Hermitian metric $h$ on the holomorphic line bundle $L$, where $L$ is equipped with a holomorphic section $\bm\phi$.  

The existence of smooth solution with $c=0$ on a compact surface was obtained by Yisong Yang \cite{Yang} under an assumption of the multiplicities of the zeros of $\bm\phi$, which was later interpreted as a classical GIT (geometric invariant theory) stability condition for the $PSL(2,\mathbb{C})$ action on $S^N\mathbb{P}^1$ and proved to be necessary including the case $c>0$. This stability condition was recently proved to be  sufficient for the existence of smooth solution in the case $c>0$ in \cite{FPY}.  The case of negative $c$ on compact Riemann surfaces of genus $g\geq 1$ for a suitable range of the coupling constant was studied by \cite{Al-Ga-Ga-P}.   The Euclidean Einstein-Maxwell equation with positive cosmological constant also exhibits interesting relations with K\"ahler geometry in real dimension four \cite{LeB}.

The aim of this article is to solve the following coupled system
\begin{equation}\label{eq:cpGV}
\left\{
\begin{array}{rcl}
i F_h + \frac{1}{2}(|\bm\phi|^2_h - \tau)\omega & = & -2\pi \sum_k \alpha_k [r_k] \\
\text{Ric } \omega - 2\alpha i\partial\bar\partial |\bm\phi|_h^2 +\alpha\tau (|\bm\phi|_h^2 -\tau)\omega & =& \tilde c\omega + 2\pi \sum_j (1-\beta_j)[q_j]
\end{array}
\right.
\end{equation}
about $(\omega, h)$ which simply introduces several delta distribution terms to the system \eqref{eq:smoothGV}.  Here $\omega$ is a K\"ahler metric on the Riemann surface $\Sigma$ and $h$ is a Hermitian metric on the holomorphic line bundle $L$, and both metrics are allowed to have singularities possibly at $ \{q_1, \cdots, q_M; r_1, \cdots, r_S\}$.  The coefficients in front of the distribution terms, i.e. the corresponding weight vectors, are assumed to satisfy $\bm \beta=(\beta_1, \cdots, \beta_M) \in (0,1)^M$ and $\bm \alpha=(\alpha_1, \cdots, \alpha_S)\in \left( \mathbb{R}_{\geq 0} \right)^S$. The points $r_1, \cdots, r_S$ need not be different from $q_1,\cdots, q_M$. A solution to this system is called \emph{singular gravitating vortices} (see Definition \ref{def:singular-gravitating-vortices} for precise definition).

From the mathematical side, Biswas and Baptista in \cite{BB} proposed one motivation for the study of vortices with degenerated singularities generalizing the study of smooth vortices by Noguchi, Bradlow, and Garc\'ia-Prada ~\cite{Brad,G1,Noguchi}. We recall it here. Given a fixed background K\"ahler metric $\omega$ with conical and degenerated singularities on $\Sigma$, the existence of vortex solution to the first equation of \eqref{eq:cpGV} was studied in  \cite{BB}. Hermitian metric with degeneracies is called \emph{parabolic structure} there. If $h$ is a vortex solution  on $(\Sigma, L, \omega, \bm\phi)$ with parabolic singularities $r_k \; (k=1,2,\cdots, S)$ of order $\alpha_k=m_k\in \mathbb{N}_+$, i.e. 

\begin{equation}
	i F_h + \frac{1}{2}(|\bm\phi|_h^2 - \tau)\omega = - 2\pi \sum_k m_k [r_k]
\end{equation}

We could define a singular Hermitian metric $h_k$ on $L_{[r_k]}$ such that $|\bm t_k|^2_{h_k}\equiv 1$ (i.e. we take the Hermitian metric with curvature $F_{h_k}$ satisfies $iF_{h_k}=2\pi [r_k]$ ).  Then $h'= h\otimes \prod_k h_k^{\otimes m_k}$ defines a (bounded) Hermitian metric on $L'=L\otimes \prod_k L_{[r_k]}^{\otimes m_k}$, and $\bm\phi'=\bm\phi\otimes \prod_k \bm t_k^{\otimes m_k}$ is a holomorphic section of $L'$ with $|\bm\phi'|_{h'}^2=|\bm\phi|_h^2 \prod_k |\bm t_k|_{h_k}^{2m_k}=|\bm\phi|_h^2$, and therefore 
\begin{equation}
	iF_{h'} + \frac{1}{2}(|\bm\phi'|_{h'}^2 - \tau )\omega=0
\end{equation}
i.e. we get a solution $h'$  to the vortex equation on $(\Sigma, L', \omega, \bm\phi')$. Conversely, any smooth vortices give rise to a vortex solution with parabolic singularities when the Higgs field splits off some holomorphic factors.  Now, suppose $\beta_j = \frac{1}{n_j}$, and $\alpha_k=m_k$ for positive integers $m_1, \cdots, m_S$ and $n_1, \cdots, n_M$, then Proposition \ref{prop:branched-cover} reveals an intimate link between singular gravitating vortices and smooth gravitating vortex through branched cover of Riemann surfaces.

The following is the outline of this article. In order to solve the existence of singular gravitating vortices, we introduce a natural smoothing of the system, where the distribution terms are replaced by a family of approximating $(1,1)$-forms $\eqref{eq:rcdGV}_\varepsilon$. These are called twisted vortices and twisted gravitating vortex (see Definition \ref{def:twisted-vortices} and \ref{def:twisted-gravitating-vortices}). The existence and uniqueness (Theorem \ref{thm:existence-uniqueness}) of twisted vortices is established in section \ref{sect:twisted-vortices}, generalizing the existence and uniqueness of vortices \cite{Brad,G1,Noguchi}. 
Inspired by the continuity method used in \cite{Al-Ga-Ga-P, FPY}, we study a continuity path deforming the coupling constant $\alpha$ while fixing the twisting forms in section \ref{sect:main}. The openness is established when $g(\Sigma)\geq 2$ and the twisting forms have suitable signs; the closedness is proved by generalizing the a priori estimates in \cite{Al-Ga-Ga-P}, and therefore the existence of twisted gravitating vortex is established in this setting. The uniqueness of twisted gravitating vortex follows from the uniqueness of twisted vortices established in Theorem \ref{thm:existence-uniqueness} and the known uniqueness of twisted K\"ahler-Einstein metric, for instance cf. \cite{Yao}. In the final section, we show uniform  $C^\gamma$ estimates on the Riemann surfaces and uniform higher order estimates away from the singularities for this family of continuity paths $\eqref{eq:rcdGV}_\varepsilon$. By taking subsequential limit (in the same spirit as \cite{Yao, Yao2}) when the smoothing parameter $\varepsilon\to 0$, we obtain existence of singular gravitating vortices (Theorem \ref{thm:existence-singular-gravitating-vortices}).

In analogy to the smooth case, singular gravitating vortices with $\tilde c=0$ gives rise to solution of Abelian Higgs model coupled with gravity such that the four-dimensional spacetime metric has conical singularities along copies of $\mathbb{R}^{1,1}$ and the energy-momentum tensor represents delta distribution along those surfaces. Light rays could split when passing across those world-sheets, and generate double images from the observers' point of view.  As a potential model of spacetime formed according to spontaneous symmetry breaking during the rapid cooling process after the big bang, cosmic string with conical singular spacetime attracts a lot interests \cite{CG, HS0, HS, Mee}. This particularly interesting case is studied in the last section and we obtain existence of solutions to singular Einstein-Bogomol'nyi equations under assumption ({\bf{A}}) (see Theorem \ref{thm:singular-EB}).  The uniqueness of solutions and the interesting case of positive $\tilde c$ are left to future study. \\

{\bf{Acknowledgement}}: 
We would like to thank Gac\'ia-Fern\'andez and Vamsi Pingali for their valuable suggestions.

\section{Approximating Continuity Paths \& Twisted vortices}\label{sect:twisted-vortices}

	\begin{definition}[Hermitian metric with parabolic singularities]
	Let $\Sigma$ be a Riemann surface and $\{r_1, \cdots, r_S\}$ be a set of points on $\Sigma$. Let $\bm \alpha=(\alpha_1, \cdots, \alpha_S)$ be a tuple of positive real numbers. 
	The Hermitian metric $h$ on $L$ is said to have parabolic singularity of order $2\alpha_k$ at $r_k$ if $h= |z|^{2\alpha_k} e^{2\varphi}$ for some H\"older continuous function $\varphi$ on $\Sigma$, and holomorphic coordinate $z$ centered at $r_k$. 
\end{definition}

Let  $\omega$ be a K\"ahler metric on $\Sigma$ and $\{q_1, \cdots, a_M\}$ be a set of points on $\Sigma$. Let $\beta_j\in (0,1), j=1,\cdots, M$ be real numbers.  The metric $\omega$ is said to have conical singularity with angle $2\pi\beta_j$ at $q_j$ if locally near $q_j$ we have $|\bm s_j|^{2-2\beta_j} \omega=\digamma \omega_0$ with $\digamma$ being positive and H\"older continuous and $\omega_0$ smooth. There are extensive studies on constant curvature metric with conical singularities on Riemann surfaces in the past decades, cf. \cite{Mc,Tr, HX}.

Denote $\bm t_k$ a defining section of the divisor $[r_k]$ for $k=1,\cdots, S$. Let $h_0$ be a smooth Hermitian metric on $L$, if the Hermitian metric $h$ on $L$ has parabolic singularity of order $2\alpha_k$ at $r_k$ for $k=1,2, \cdots, S$, then $h=h_0 \prod_k |\bm t_k|^{2\alpha_k} e^{2\tilde f}$ for some smooth Hermitian metric $h_0$ and H\"older continuous function $\tilde f$ globally defined on $\Sigma$, where $|\bm t_k|$ is the norm of the defining section for $L_{[r_k]}$ under some smooth Hermitian metric. Denote $ f= \tilde f+\frac{1}{2}\sum_k \alpha_k \log |\bm t_k|^2 $. In this way, the potential $f$  has logarithmic pole at $r_k$ while the reduced potential $\tilde f$ does not. For any holomorphic section $\bm\phi$ of $L$, $
|\bm\phi|_h^2 
= 
|\bm\phi|^2 e^{2f}= |\bm\phi|^2 \prod_k |\bm t_k|^{2\alpha_k} e^{2\tilde f}$. 
The number 
\[
\tilde N=N + \sum_k \alpha_k.
\]
is called the \emph{parabolic degree} in the literature \cite{BB}. 

Fix $\omega_0$ a constant curvature metric on $\Sigma$ and assume the unknown K\"ahler metric $\omega=\omega_0+2i\partial\bar\partial u \in [\omega_0]$. 
 Let $\tilde \chi(\Sigma) = \chi(\Sigma) - \sum_{j} (1-\beta_j)$ and  
\begin{equation}
\tilde c
=
\frac{2\pi}{\text{Vol}_{\omega_0}} 
\Big( \tilde \chi(\Sigma) - 2\alpha\tau \tilde N \Big)
=
\frac{2\pi}{\text{Vol}_{\omega_0}} 
\Big( 2 -2g(\Sigma)- \sum_j (1-\beta_j) - 2\alpha\tau  N - 2\alpha\tau \sum_k \alpha_k\Big).
\end{equation}

 Let $h_0$ be a constant curvature metric on $L$ and $h_k$ to be a constant curvature metric on $L_{[r_k]}$, i.e. 

\begin{equation}
\text{Ric } \omega_0
=
\frac{2\pi}{\text{Vol}_{\omega_0}}  \chi(\Sigma)\omega_0\;\;, \; 
iF_{h_0}=\frac{2\pi}{\text{Vol}_{\omega_0}}N \omega_0\;\;,\;
iF_{h_k} = \frac{2\pi}{\text{Vol}_{\omega_0}} \omega_0
\end{equation}
Let $\bm s_j$ be a defining section of the divisor $[q_j]$ and fix a Hermitian metric on this line bundle with constant curvature, denote $|\bm s_j|$ the norm under this metric. Using $\Delta$ to represent the Laplacian operator of the metric $\omega_0$. The Poincar\'e-Lelong formula $i\partial\bar\partial \log |\bm t_k|^2 = - \frac{2\pi}{\text{Vol}_{\omega_0}} \omega_0+ 2\pi [r_k]$,  $i\partial\bar\partial \log |\bm s_j|^2 = - \frac{2\pi}{\text{Vol}_{\omega_0}} \omega_0+ 2\pi [q_j]$
together with $\text{Ric }\omega = \text{Ric }\omega_0 - i\partial\bar\partial \log (1-\Delta u)$ transforms the singular gravitating vortices equation \eqref{eq:cpGV} about $(\omega, h)$ into the a PDE system for $u$ and $\tilde f$:
\begin{equation}
\label{eq:cpGV-PDE}
\begin{split}
\Delta \tilde f
+
\frac{1}{2} ( |\bm\phi|_h^2 -\tau ) (1-\Delta u)
& = - \frac{2\pi}{\text{Vol}_{\omega_0}}\tilde N\\
\Delta u +
\lambda \frac{e^{4\alpha\tau \tilde f - 2\alpha |\bm\phi|_h^2 - 2\tilde c u}}{\prod_j |\bm s_j|^{2(1-\beta_j)}} 
& = 1
\end{split}
\end{equation}
where $\lambda>0$ is a real constant. Notice that in case $\tilde c\neq 0$, the constant $\lambda$ could be absorbed into the term $e^{-2\tilde c u}$, therefore there is no loss in assuming $\lambda=1$; while in the case $\tilde c=0$, varying $\lambda$ gives rise to different solutions. These two cases will be treated differently in the following materials.

\begin{definition}[singular gravitating vortices]\label{def:singular-gravitating-vortices}
	Let $\omega$ be a K\"ahler metric with conical singularities of angle $2\pi \beta_j$ at $q_j$ and $h$ be a Hermitian metric with parabolic singularity of order $2\alpha_k$ at $r_k$. If $(\omega, h)$ satisfies Equation \eqref{eq:cpGV} or equivalent \eqref{eq:cpGV-PDE}, it is called \emph{singular gravitating vortices}. 
\end{definition}

\subsection{Smoothing of the current equations}

Let $L_{[q_j]}, L_{[r_k]}$ and $\bm s_j, \bm t_k$ be with the same meaning as before.  In order to solve \eqref{eq:cpGV} we consider a family of approximating equations. Let $\varepsilon>0$, write 
\[
\chi_j^\varepsilon
= \frac{2\pi}{\text{Vol}_{\omega_0}} \omega_0 + i\partial\bar\partial \log (|\bm s_j|^2 +\varepsilon)\;,\;\;
\chi_k^{'\varepsilon}
=\frac{2\pi}{\text{Vol}_{\omega_0}} \omega_0+i\partial\bar\partial \log ( |\bm t_k|^2+\varepsilon ),
\] 
then $\chi_j^\varepsilon$ is a smoothing of the integration current  $2\pi [q_j]$, and $\chi_k^{'\varepsilon}$ is a smoothing of $2\pi [r_k]$. Moreover, $\chi_j^\varepsilon$ and $\chi_k^{'\varepsilon}$ are positive $(1,1)$-forms \cite{CDS, Yao, Yao2}. We are going to firstly solve a pair of smooth metrics $(\omega_{\varepsilon, \alpha}, h_{\varepsilon, \alpha})$ satisfying 

\begin{equation}\label{eq:rcdGV}
\left\{
\begin{array}{rcl}
iF_{h_{\varepsilon,\alpha}} + \frac{1}{2}(|\bm\phi|^2_{h_{\varepsilon,\alpha}} - \tau)\omega_{\varepsilon,\alpha} & = & -\sum_k \alpha_k\chi_k^{'\varepsilon} \\
\text{Ric } \omega_{\varepsilon,\alpha} - 2\alpha i\partial\bar\partial |\bm\phi|_{h_{\varepsilon,\alpha}}^2 +\alpha\tau (|\bm\phi|_{h_{\varepsilon,\alpha}}^2 -\tau)\omega_{\varepsilon,\alpha} & =& \tilde c\omega_{\varepsilon,\alpha} + \sum_j (1-\beta_j)\chi_j^\varepsilon
\end{array}
\right.
\end{equation}

Here $\omega_{\varepsilon,\alpha}=\omega_0+2i\partial\bar\partial u_{\varepsilon,\alpha}$ and $h_{\varepsilon,\alpha}=h_0 e^{2f_{\varepsilon,\alpha}}
=:h_0 \prod_k(|\bm t_k|^2+\varepsilon)^{\alpha_k} e^{2\tilde f_{\varepsilon,\alpha}}$. We denote this system by $\eqref{eq:rcdGV}_{\varepsilon,\alpha}$ to emphasize the dependence on the coupling constant $\alpha$ and the smoothing parameter $\varepsilon$. 
As above, writing $\Phi_{\varepsilon,\alpha}=|\bm\phi|_{h_{\varepsilon,\alpha}}^2$, then the system is transformed to the equivalent system about the potential functions $(u_{\varepsilon,\alpha}, \tilde f_{\varepsilon,\alpha})$:

\begin{equation}\label{eq:smooth-system}
\begin{split}
\Delta \tilde f_{\varepsilon,\alpha}
+
\frac{1}{2} \left(\Phi_{\varepsilon,\alpha} -\tau \right) (1-\Delta u_{\varepsilon,\alpha})
& = -\frac{2\pi}{\text{Vol}_{\omega_0}}\tilde N\\
\Delta u_{\varepsilon,\alpha} + \frac{e^{4\alpha\tau \tilde f_{\varepsilon,\alpha} - 2\alpha \Phi_{\varepsilon,\alpha} - 2\tilde c u_{\varepsilon,\alpha}}}{\prod_j (|\bm s_j|^2+\varepsilon)^{1-\beta_j} } 
& = 1
\end{split}
\end{equation}
This system of semilinear  elliptic PDEs, denoted by $\eqref{eq:smooth-system}_{\varepsilon,\alpha}^{sm}$, will be crucially used in the a priori estimates and regularity study. From now on, we assume $\text{Vol}_{\omega_0}=2\pi$ for notation simplicity. However, we should notice that the system $\eqref{eq:cpGV}$ does not enjoy a scaling symmetry on $\omega$ when $\alpha,\tau$ are fixed, i.e. solutions inside the K\"ahler class $[\omega_0]$ does not scale to a solution inside the K\"ahler class $2[\omega_0]$ for istance. 

\subsection{Twisted vortices}
When the coupling constant $\alpha=0$, the system $\eqref{eq:rcdGV}_{\varepsilon, \alpha}$ decouples into

\begin{equation}\label{eq:twistedKE-vortex}
\left\{
\begin{array}{rcl}
iF_{h_{\varepsilon, 0}} + \frac{1}{2}(|\bm\phi|^2_{h_{\varepsilon, 0}} - \tau)\omega_{\varepsilon, 0} & = & -\sum_k \alpha_k\chi_k^{'\varepsilon} \\
\text{Ric } \omega_{\varepsilon,0} & =&
\frac{2\pi}{\text{Vol}_{\omega_0}} \tilde \chi\omega_{\varepsilon,0} + \sum_j (1-\beta_j)\chi_j^\varepsilon
\end{array}
\right.
\end{equation}

The second equation with $\sum_j(1-\beta_j)\chi_j^\varepsilon$ replaced by a general closed $(1,1)$-form is called \emph{twisted K\"ahler-Einstein equation} whose solution is called a \emph{twisted K\"ahler-Einstein metric}.  This was studied by Aubin and Yau in their classical continuity path to solve K\"ahler-Einstein problem, and this also appears in various other geometric situations, \cite{CDS, ST}. The specialization to this particular kind of ``twisting forms'' smoothing an integration current was used in the study of the existence and regularity of conical K\"ahler-Einstein metrics, for instance see \cite{GP, HX, Yao}. 

Given $\omega_0$ being the constant curvature metric on $\Sigma$, i.e. $\text{Ric }\omega_0=\frac{2\pi}{\text{Vol}_{\omega_0}}\chi \omega_0$ with $\chi=\chi(\Sigma)=2-2g(\Sigma)$.  For any other constant $\tilde\chi$, the way of obtaining twisted K\"ahler-Einstein metric with twisting form $\xi=\frac{2\pi}{\text{Vol}_{\omega_0}} (\chi - \tilde \chi)\omega_0+ i \partial\bar\partial F\in (\chi-\tilde\chi)[\omega_0]$, i.e. solution of 
\begin{equation}
\text{Ric }\omega = \frac{2\pi}{\text{Vol}_{\omega_0}} \tilde \chi  \omega + \xi
\end{equation}
is via solving the continuity path
\begin{equation}
\text{Ric }\omega = (1-t)\text{Ric }\omega_0 + t (\frac{2\pi}{\text{Vol}_{\omega_0}}\tilde \chi \omega + \xi), 
\end{equation}
which is equivalent to the PDE:

\begin{equation}
\omega_{u_t} = e^{-2t \frac{2\pi}{\text{Vol}_{\omega_0}} \tilde \chi u_t - tF}\omega_0. 
\end{equation}

It is proved by Aubin and Yau that for any $\tilde\chi\leq 0$ and $\xi$ of the above, the equation admits a unique solution.  Let us denote the twisted K\"ahler-Einstein metric by $\omega_{\varepsilon, 0}$ when $\xi=\sum_j (1-\beta_j)\chi_j^\varepsilon$.

The first equation of the system  $\eqref{eq:smooth-system}_{\varepsilon, \alpha}^{sm}$ (when fixing the underlying K\"ahler metric $\omega_{\varepsilon,0}$) is a vortex type equation, with a smooth twisting term.  In general, given a Riemann surface $\Sigma$ equipped with a K\"ahler metric $\omega$, letting $\eta$ be any smooth $(1,1)$-form on $\Sigma$ with $\int_\Sigma \eta=2\pi b$. Since any $(1,1)$-form on a Riemann surface is necessarily closed, we could write $\eta=\frac{2\pi}{\text{Vol}_{\omega}}b\omega+ i\partial\bar\partial F$ for some smooth function $F$, then we can consider the equation:
\begin{equation}\label{eq:twisted-vortex-equation}
iF_{h} + \frac{1}{2} (|\bm\phi|_h^2-\tau)\omega= \eta
\end{equation}

\begin{definition}[twisted vortices]
	\label{def:twisted-vortices}
	If $h$ satisfies the Equation \eqref{eq:twisted-vortex-equation}, then it is called a \emph{twisted vortices} with twist form $\eta$. 
	\end{definition}

The following is a necessary condition (obtained by integrating the equation over $\Sigma$) for the existence of twisted vortices with twist form $\eta$:
\begin{equation}\label{eq:necessary-sufficient}
\tau \cdot \frac{\text{Vol}_\omega}{2\pi} > 2(N-b).
\end{equation}

Define $\eta_t=\frac{2\pi}{\text{Vol}_{\omega}} b\omega+ ti\partial\bar\partial F$ for $t\in [0,1]$. We could solve the continuity path

\begin{equation}\label{eq:twisted-vortex}
iF_{h_t}+ \frac{1}{2} (|\bm\phi|_{h_t}^2 - \tau )\omega = \eta_t
\end{equation}

At $t=0$, this becomes the vortex equation 
\[
iF_h + \frac{1}{2}(|\bm\phi|_h^2-\tau)\omega
= 
\frac{2\pi}{\text{Vol}_{\omega}} b\omega
\]

studied by Noguchi, Bradlow and Garc\'ia-Prada, whose existence and uniqueness was established exactly under the numerical assumption \eqref{eq:necessary-sufficient}.  Let us denote the unique solution by
$h_0$, i.e. 

\begin{equation}
iF_{h_0} + \frac{1}{2} (|\bm\phi|_{h_0}^2-\tau)\omega
= 
\frac{2\pi}{\text{Vol}_{\omega}} b\omega
\end{equation} 

Write $h_t = h_0e^{2f_t}$, then Equation $\eqref{eq:twisted-vortex}_{t}$ is equivalent to the following PDE:

\begin{equation}\label{eq:twisted-vortex-PDE}
\Delta f_t + \frac{1}{2} |\bm\phi|_{h_0}^2 ( e^{2f_t}-1) = - \frac{1}{2} \Delta(tF)
\end{equation}

To get \emph{openness} of this continuity path, we set up a map between Banach spaces:
\begin{align}
\mathcal{V}: C^{2,\alpha}(\Sigma)\times [0,1] & \longrightarrow  C^\alpha(\Sigma)\\
(f, t) & \longmapsto \Delta f + \frac{1}{2}|\bm\phi|_{h_0}^2 (e^{2f} - 1) + \frac{1}{2}\Delta (tF)
\end{align}
At the solution $(f_t, t)$ to Equation \eqref{eq:twisted-vortex-PDE}, the linearization in the direction $(\dot f, 0)$ is:
\[
\delta\mathcal{V}|_{(f_t, t)}(\dot f) 
= \Delta \dot f + |\bm\phi|_{h_0}^2 e^{2f_t} \dot f
\]

The kernal is easily seen to be $\{0\}$ by integrating 
\begin{equation}
\label{eqn:integral-vanishing}
\int_\Sigma \left(  \dot f \Delta \dot f + |\bm\phi|_{h_t}^2 \dot f 
\right)\omega
=
\int_\Sigma \left( 2 |\nabla \dot f|^2 + |\bm\phi|_{h_t}^2 \dot f^2 \right)\omega
=0
\end{equation}

Therefore $\delta\mathcal{V}|_{(f_t,t)}$ is an invertible bounded linear operator from $C^{2,\alpha}(\Sigma)$ to $C^\alpha(\Sigma)$, and the openness follows from standard implicit function theorem in Banach spaces.  

To get \emph{closedness}, we need some a priori estimates. The first one is the upper bound of the Hermitian metrics $h_t$ which can be achieved by maximum principle, applied to $\Phi_t=|\bm\phi|_{h_t}^2$ satisfying the equation 
\begin{equation}
\label{eqn:maximum-principle}
\Delta \Phi_t
=
- 2 \frac{|\nabla \Phi_t|^2}{\Phi_t} 
+ \Phi_t \left(
\tau - \Phi_t 
+ 2b - \Delta (tF)
\right)
\end{equation}

\begin{lemma}For any $t\in [0,1]$, 
	\[
	|\bm\phi|_{h_t}^2 \leq \tau + 2b +  |\!|\Delta F|\!|_{C^0(\Sigma)}
	\]
\end{lemma}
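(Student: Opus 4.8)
The plan is to apply the maximum principle to the function $\Phi_t = |\bm\phi|_{h_t}^2$, using the second-order equation \eqref{eqn:maximum-principle} it satisfies. Since $\Sigma$ is a compact Riemann surface and $\Phi_t$ is smooth (as $h_t$ is a smooth metric for each $t$ along the continuity path), $\Phi_t$ attains its maximum at some point $p \in \Sigma$. If $\Phi_t(p) = 0$ identically then $\bm\phi \equiv 0$ and the bound is trivial; otherwise we may assume $\Phi_t(p) > 0$.

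At the maximum point $p$ we have $\nabla \Phi_t(p) = 0$ and $\Delta \Phi_t(p) \leq 0$ (with the sign convention for $\Delta$ fixed by \eqref{eqn:maximum-principle}, i.e. the geometer's Laplacian for which $\Delta \Phi_t \le 0$ at an interior max). Evaluating \eqref{eqn:maximum-principle} at $p$, the gradient term drops out and we obtain
\[
0 \geq \Delta \Phi_t(p) = \Phi_t(p)\left(\tau - \Phi_t(p) + 2b - \Delta(tF)(p)\right).
\]
Dividing by $\Phi_t(p) > 0$ gives $\Phi_t(p) \leq \tau + 2b - \Delta(tF)(p) \leq \tau + 2b + t\,\|\Delta F\|_{C^0(\Sigma)} \leq \tau + 2b + \|\Delta F\|_{C^0(\Sigma)}$, where in the last step I use $t \in [0,1]$. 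Since $p$ is the maximum, $\Phi_t(x) \leq \Phi_t(p)$ for all $x \in \Sigma$, which is exactly the claimed inequality.

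I expect the only genuinely delicate point to be bookkeeping of the sign convention for the Laplacian so that the inequality at the interior maximum has the right direction consistent with how \eqref{eqn:maximum-principle} was derived; once that is pinned down the argument is a one-line maximum principle. One should also note in passing that this bound is uniform in $t$, which is the feature actually needed for the closedness of the continuity path. No further estimates or compactness arguments are required here.
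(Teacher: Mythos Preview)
Your approach is exactly the paper's: it states the lemma immediately after \eqref{eqn:maximum-principle} with the remark that it ``can be achieved by maximum principle,'' and the parallel Proposition~\ref{prop:Phi-estimate} spells out the identical argument. So the method is correct and matches the paper.

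There is, however, a sign slip in your write-up that you yourself flagged as the delicate point. In this paper's convention (see the proof of Proposition~\ref{prop:Phi-estimate}, and the relation $\omega=(1-\Delta u)\omega_0$), one has $\Delta\Phi_t(p)\geq 0$ at an interior maximum, not $\leq 0$. With the inequality as you wrote it, $0\geq \Phi_t(p)\bigl(\tau-\Phi_t(p)+2b-\Delta(tF)(p)\bigr)$ together with $\Phi_t(p)>0$ would force $\Phi_t(p)\geq \tau+2b-\Delta(tF)(p)$, the opposite of what you claim. Replacing $0\geq$ by $0\leq$ (the correct direction here) makes your chain of inequalities valid and yields the stated bound.
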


According to this $C^0$ bound of $|\bm\phi|_{h_t}^2$, we know $\Delta f_t$ is uniformly bound by equation \eqref{eq:twisted-vortex-PDE}, therefore 
\[
|\!| f_t -  \frac{1}{\text{Vol}_{\omega}} \int_\Sigma f_t\omega|\!|_{C^0(\Sigma)}
\leq 
C
\]

Integrating Equation \eqref{eq:twisted-vortex-PDE}, we obtain
\[
\frac{1}{\text{Vol}_{\omega}} 
\int_\Sigma |\bm\phi|_{h_0}^2 e^{2f_t}\omega
= 
\frac{1}{\text{Vol}_{\omega}} \int |\bm\phi|_{h_0}^2 \omega
=
\frac{1}{\text{Vol}_{\omega}}\int_\Sigma e^{2f_t + \log |\bm\phi|_{h_0}^2 }\omega
\geq 
e^{\frac{1}{\text{Vol}_{\omega}} \int_\Sigma \left(  2f_t + \log |\bm\phi|_{h_0}^2 \right) \omega}
\]
from which we get 
\[
\max_\Sigma f_t \geq 0
\]
and 
\[
\frac{1}{\text{Vol}_{\omega}}\int_\Sigma 2f_t\omega
\leq 
\log \big( \frac{1}{\text{Vol}_{\omega}} \int_\Sigma |\bm\phi|_{h_0}^2 \omega  \big)
- \frac{1}{\text{Vol}_{\omega}} \int_\Sigma \log |\bm\phi|_{h_0}^2 \omega
:= C'
\]

It follows that 
\begin{lemma}\label{eq:potential-C0-twisted-vortex}
There exists $C>0$ independent of $t\in [0, 1]$ such that
\begin{equation*}
|\!|f_t|\!|_{C^0(\Sigma)}
\leq 
C
\end{equation*}
\end{lemma}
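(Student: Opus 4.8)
The plan is to assemble the three ingredients already derived above: the uniform oscillation bound, the sign $\max_\Sigma f_t \ge 0$, and the average upper bound. Write $\bar f_t := \frac{1}{\text{Vol}_\omega}\int_\Sigma f_t\,\omega$ for the $\omega$-average of $f_t$. From the $C^0$ bound on $\Phi_t = |\bm\phi|^2_{h_t}$ furnished by the preceding maximum-principle Lemma, the PDE \eqref{eq:twisted-vortex-PDE} gives a $t$-independent bound on $\|\Delta f_t\|_{C^0(\Sigma)}$; the Green's representation for the fixed metric $\omega$ (equivalently, $W^{2,p}$ elliptic estimates plus Sobolev embedding) then bounds the oscillation $\operatorname{osc}_\Sigma f_t := \max_\Sigma f_t - \min_\Sigma f_t$ by a constant independent of $t$.

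Next I would extract the two-sided bound. For the upper bound: the Jensen-type computation carried out above shows $\bar f_t \le \tfrac12 C'$, so $\max_\Sigma f_t \le \bar f_t + \operatorname{osc}_\Sigma f_t \le \tfrac12 C' + C$. For the lower bound: the same computation also forces $\max_\Sigma f_t \ge 0$, hence $\min_\Sigma f_t \ge \max_\Sigma f_t - \operatorname{osc}_\Sigma f_t \ge -C$. Combining, $\|f_t\|_{C^0(\Sigma)} \le C$ with $C$ independent of $t \in [0,1]$, which is the assertion.

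The step I expect to be the real content is the \emph{lower bound} on $f_t$: for a semilinear equation of the form \eqref{eq:twisted-vortex-PDE} a two-sided barrier argument is not available, so $\min_\Sigma f_t$ cannot be controlled directly. It is recovered only by first using Jensen's inequality on the integrated equation to force $\max_\Sigma f_t \ge 0$ and then converting this into a pointwise lower bound via the oscillation estimate. That oscillation estimate is in turn inexpensive precisely because the background metric $\omega$ is held fixed along this continuity path; in the coupled gravitating vortex system, where $\omega$ also varies, the analogous oscillation control becomes the genuinely hard step.
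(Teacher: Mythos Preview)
Your argument is correct and mirrors the paper's exactly: the paper derives the oscillation bound from the uniform $C^0$ control of $\Delta f_t$, then obtains $\max_\Sigma f_t \ge 0$ and $\bar f_t \le \tfrac12 C'$ from the integrated equation via Jensen, and concludes the two-sided $C^0$ bound just as you do. Your added remark about why the lower bound is the nontrivial half and why the fixed background metric makes the oscillation step cheap is accurate and a useful gloss on the paper's terse ``It follows that''.
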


The standard $W^{2,p}$ estimate applied to Equation \eqref{eq:twisted-vortex-PDE} implies $|\!|f_t|\!|_{W^{2,p}(\Sigma)}$ is uniformly bounded and Sobolev's embedding then implies $|\!|f_t|\!|_{C^{1,\alpha}(\Sigma)}$ is uniformly bounded. A bootstrapping argument to \eqref{eq:twisted-vortex-PDE} thus implies 

\begin{proposition}
	For any integer $k\geq 0$, there exists constant $C_{k}>0$ independent of $t\in [0,1]$ such that 
	\[
	|\!| f_t |\!|_{C^k(\Sigma)}
	\leq 
	C_k. 
	\]
	\end{proposition}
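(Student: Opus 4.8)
The plan is to upgrade the already-established uniform bounds $\|f_t\|_{C^0(\Sigma)} \leq C$ and $\|f_t\|_{C^1(\Sigma)} \leq C_1$ (the latter via $W^{2,p}$ estimates and Sobolev embedding, as indicated just before the statement) to uniform bounds on all higher derivatives, by a bootstrapping argument applied to the PDE \eqref{eq:twisted-vortex-PDE}. The key observation is that the right-hand side $-\frac{1}{2}\Delta(tF)$ is independent of the unknown $f_t$ and, since $F$ is a fixed smooth function, it lies in $C^\infty(\Sigma)$ with all its $C^k$ norms bounded uniformly in $t \in [0,1]$. So the only $t$-dependence on the right-hand side of the equation that matters is through $f_t$ itself, appearing in the term $\frac{1}{2}|\bm\phi|_{h_0}^2(e^{2f_t}-1)$.

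The induction runs as follows. Suppose, for some $k \geq 1$, we have a uniform bound $\|f_t\|_{C^{k,\gamma}(\Sigma)} \leq C_{k,\gamma}$ for a fixed Hölder exponent $\gamma \in (0,1)$ (the base case $k=1$ being the $C^{1,\alpha}$ bound mentioned in the excerpt). Write \eqref{eq:twisted-vortex-PDE} as
\begin{equation*}
\Delta f_t = -\frac{1}{2}|\bm\phi|_{h_0}^2(e^{2f_t}-1) - \frac{1}{2}\Delta(tF) =: G_t.
\end{equation*}
Since $|\bm\phi|_{h_0}^2$ is a fixed smooth function on $\Sigma$, and $s \mapsto e^{2s}-1$ is smooth, the composition and product rules show that $G_t \in C^{k,\gamma}(\Sigma)$ with $\|G_t\|_{C^{k-1,\gamma}(\Sigma)}$ bounded in terms of $\|f_t\|_{C^{k-1,\gamma}(\Sigma)}$ and the fixed data — hence uniformly in $t$. (One only needs $k-1$ derivatives of $G_t$ controlled at this stage if one is careful, but it is cleanest to note $G_t \in C^{k,\gamma}$.) Then the interior Schauder estimate for the Laplacian of $\omega_0$ on the compact manifold $\Sigma$ gives $\|f_t\|_{C^{k+1,\gamma}(\Sigma)} \leq C(\|G_t\|_{C^{k-1,\gamma}(\Sigma)} + \|f_t\|_{C^0(\Sigma)})$, which is uniform in $t$. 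This closes the induction and yields $\|f_t\|_{C^{k}(\Sigma)} \leq C_k$ for every $k$, since $C^k \hookrightarrow C^{k-1,\gamma}$ and $C^{k,\gamma} \hookrightarrow C^k$.

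I do not anticipate a serious obstacle here: the estimate is genuinely routine once Lemma \ref{eq:potential-C0-twisted-vortex} and the $C^{1,\alpha}$ bound are in hand, because the nonlinearity $e^{2f_t}$ is tame (the argument $f_t$ is already uniformly bounded, so $e^{2f_t}$ and all its derivatives are controlled once lower-order derivatives of $f_t$ are) and the twisting term is linear in the fixed smooth datum $F$. The one point worth stating carefully is that the Schauder constants depend only on $(\Sigma, \omega_0)$ and $k, \gamma$, not on $t$, which is clear since $\omega_0$ is a single fixed metric; and that the bounds on the $C^k$ norm of the composite function $e^{2f_t}$ follow from the Faà di Bruno / chain rule with the uniform $C^0$ bound ensuring the outer function $e^{2\cdot}$ is evaluated on a compact interval. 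Thus the proposition follows by finite induction on $k$.
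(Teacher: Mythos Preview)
Your proposal is correct and follows the same approach as the paper: the paper simply states that ``a bootstrapping argument to \eqref{eq:twisted-vortex-PDE}'' yields the proposition, without further detail, and your argument is exactly that bootstrapping spelled out via Schauder estimates. The only remark is that your indexing could be streamlined (from $f_t\in C^{k,\gamma}$ one gets $G_t\in C^{k,\gamma}$, hence $f_t\in C^{k+2,\gamma}$), but this does not affect correctness.
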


In conclusion, we establish a generalization of the theorem of Noguchi, Bradlow and Garc\'ia-Prada \cite{Noguchi, Brad, Garcia-Prada} to the case of twisted vortices equation. 

\begin{theorem}[Existence and Uniqueness of twisted vortices]\label{thm:existence-uniqueness}
	Let $\eta$ be a smooth $(1,1)$-form on the compact Riemann surface $\Sigma$ with $\frac{1}{2\pi}\int_\Sigma \eta=b$, and $\bm\phi$ be a holomorphic section of the holomorphic line bundle $L$ with $c_1(L)=N$. Then, the twisted vortices equation \eqref{eq:twisted-vortex-equation} admits a solution iff the numerical assumption 
	$
	\tau\cdot \frac{\text{Vol}_\omega}{2\pi} > 2(N-b)
	$
	is satisfied. Moreover, in this case, the solution is unique. 
	\end{theorem}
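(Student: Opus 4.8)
The plan is to read the statement as an equivalence together with a uniqueness assertion and to prove the three parts separately, reusing the continuity path \eqref{eq:twisted-vortex} and the a priori estimates set up above.

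\emph{Necessity of the numerical condition.} If $h$ solves \eqref{eq:twisted-vortex-equation}, integrate the equation over $\Sigma$; using $\int_\Sigma iF_h=2\pi N$ (Chern--Weil) and $\int_\Sigma\eta=2\pi b$ one gets
\[
\tfrac12\int_\Sigma|\bm\phi|_h^2\,\omega \;=\; 2\pi(b-N)+\tfrac{\tau}{2}\operatorname{Vol}_\omega .
\]
As $\bm\phi$ is a non-zero holomorphic section the left-hand side is strictly positive, which is exactly $\tau\cdot\operatorname{Vol}_\omega/2\pi>2(N-b)$. The same identity shows that if $\bm\phi\equiv 0$ the inequality can never hold, so in the sufficiency argument below we may assume $\bm\phi\not\equiv 0$.

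\emph{Sufficiency.} Assume the numerical condition and write $\eta=\tfrac{2\pi}{\operatorname{Vol}_\omega}b\,\omega+i\partial\bar\partial F$. Run the continuity path \eqref{eq:twisted-vortex}, equivalently the scalar equation \eqref{eq:twisted-vortex-PDE}, for $t\in[0,1]$, and let $S\subseteq[0,1]$ be the set of parameters for which it is solvable. First, $0\in S$: at $t=0$ this is the vortex equation of Noguchi, Bradlow and Garc\'ia-Prada, solvable precisely under the numerical condition. Second, $S$ is open: this is the implicit-function-theorem argument carried out above, the linearisation $\dot f\mapsto\Delta\dot f+|\bm\phi|_{h_t}^2\dot f$ having trivial kernel by \eqref{eqn:integral-vanishing} and, being a self-adjoint elliptic operator of index zero, being an isomorphism $C^{2,\alpha}(\Sigma)\to C^{\alpha}(\Sigma)$. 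Third, $S$ is closed: the maximum principle applied to \eqref{eqn:maximum-principle} bounds $|\bm\phi|_{h_t}^2$, hence $\Delta f_t$ in $C^0$ and hence the oscillation of $f_t$; integrating \eqref{eq:twisted-vortex-PDE} and applying Jensen's inequality bounds the average of $f_t$ from above and $\max_\Sigma f_t$ from below, which gives the uniform $C^0$ bound of Lemma \ref{eq:potential-C0-twisted-vortex}; then $W^{2,p}$ estimates, Sobolev embedding and a bootstrap give uniform $C^k$ bounds for every $k$, so by Arzel\`a--Ascoli any sequence of solutions with parameters in $S$ subconverges in $C^\infty$ to a solution of the limiting equation. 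Since $[0,1]$ is connected, $S=[0,1]$, and $h_1$ solves \eqref{eq:twisted-vortex-equation}.

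\emph{Uniqueness.} Let $h=h_0e^{2f}$ and $h'=h_0e^{2f'}$ both solve \eqref{eq:twisted-vortex-equation} and set $\psi=f'-f$, so $h'=he^{2\psi}$ and $|\bm\phi|_{h'}^2=|\bm\phi|_h^2e^{2\psi}$. Subtracting the two instances of \eqref{eq:twisted-vortex-equation} gives the homogeneous equation
\[
\Delta\psi+\tfrac12|\bm\phi|_h^2\,(e^{2\psi}-1)=0 .
\]
Suppose $\psi\not\equiv 0$; without loss of generality $\max_\Sigma\psi>0$ (otherwise $\min_\Sigma\psi<0$ and one argues symmetrically with $-\psi$). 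On the open set $\{\psi>0\}$ the right-hand side is $\le 0$, hence $\Delta\psi\le 0$ there, so by the strong maximum principle (with the sign of $\Delta$ as in \eqref{eqn:integral-vanishing}) the set where $\psi$ attains its maximum over $\Sigma$ is open as well as closed, hence all of the connected surface $\Sigma$; thus $\psi$ is constant, so $\Delta\psi\equiv 0$, forcing $|\bm\phi|_h^2(e^{2\psi}-1)\equiv 0$, and since $\bm\phi\not\equiv 0$ this gives $\psi\equiv 0$, contradicting $\max_\Sigma\psi>0$. Hence $\psi\equiv 0$ and $h=h'$. (Equivalently, the scalar vortex energy of which \eqref{eq:twisted-vortex-PDE} is the Euler--Lagrange equation is strictly convex in the potential, which again yields uniqueness.)

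\emph{Expected main obstacle.} Openness and uniqueness are soft, so the only genuinely analytic part is the closedness step, i.e.\ the uniform a priori control of $f_t$ along the path. Because the twisting form $\eta$ is smooth and held fixed, these are exactly the mild estimates recorded in the lemmas above; the one point requiring care is the lower bound for $f_t$, which is supplied by Jensen's inequality after integrating \eqref{eq:twisted-vortex-PDE}.
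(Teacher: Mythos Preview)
Your proof is correct. The necessity and existence arguments coincide with the paper's own development: the continuity path \eqref{eq:twisted-vortex}, the linearisation, and the a~priori estimates preceding the theorem are exactly the ones you invoke, and you assemble them in the same way.

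The one genuine difference is in the uniqueness step. The paper observes that the \emph{pointwise} maximum principle does not apply directly, since the maximum of $\psi$ may fall on a zero of $\bm\phi$ where the coefficient $|\bm\phi|_h^2$ vanishes, and instead runs a perturbation argument: it replaces $|\bm\phi|_h^2$ by $|\bm\phi|_h^2+\delta$, produces solutions $f_\delta$ of the perturbed equation via the implicit function theorem, applies the now unobstructed maximum principle to conclude $f_\delta\equiv 0$ for all small $\delta>0$, and lets $\delta\to 0$. Your route via the \emph{strong} maximum principle---subharmonicity of $\psi$ on $\{\psi>0\}$ forcing the maximum set to be clopen in $\Sigma$---sidesteps this detour and is more elementary; the integration/convexity alternative you mention in parentheses (multiply the difference equation by $\psi$ and integrate, using that $\psi(e^{2\psi}-1)\ge 0$) is simpler still. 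All three arguments are valid; yours avoids the auxiliary implicit-function-theorem machinery at the cost of invoking Hopf's strong maximum principle rather than just the weak one.
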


\begin{proof}
Let $h$ and $h'=h e^{2f}$ be two solutions to \eqref{eq:twisted-vortex-equation}. Then by subtracting the two equations, we get 
	\[
	\Delta f + \frac{1}{2} |\bm\phi|_h^2 ( e^{2f}-1)=0
	\]
	The maximum principle does not directly apply since the maximum of $f$ might appear at the vanishing point of $\bm\phi$. 	However, we could still prove $f\equiv 0$ by using an easy perturbation argument. 
	
	We set up the map:
	\begin{align*}
	\mathcal{P}: C^{2,\alpha}(\Sigma)\times [0,1]
	& \longrightarrow C^\alpha(\Sigma)\\
	(f,\delta) 
	& \longmapsto \Delta f + \frac{1}{2} ( |\bm\phi|_h^2 + \delta) (e^{2f}-1)
	\end{align*}
	Since $\delta \mathcal{P}|_{(f,0)}(\dot f)= \Delta \dot f +  |\bm\phi|_h^2 e^{2f}\dot f$ is an invertible operator from $C^{2,\alpha}$ to $C^\alpha$, implicit function theorem tells us that for $\delta\in (0, \delta_0)$, there exists a unique $f_\delta$ such that 
	$
	\mathcal{P}(f_\delta, \delta)=0
	$
	and $|\!|f_\delta - f|\!|_{C^{2,\alpha}}\to 0$ as $\delta\to 0$. 
	
	Applying maximum principle to 
	\[
	\mathcal{P}(f_\delta, \delta)
	=
	\Delta f_\delta + \frac{1}{2}(|\bm\phi|_h^2 + \delta) (e^{2f_\delta}-1)=0
	\]
	at the maximum and minimum of $f_\delta$ on $\Sigma$, we obtain that 
	\[
	f_\delta \equiv 0, \;\; \forall \delta \in (0, \delta_0)
	\]
	As a consequence $f\equiv 0$. 
	\end{proof}

\begin{remark}
	This theorem does not put restriction on the genus of the Riemann surface.  
\end{remark}

Going back to Equation \eqref{eq:twistedKE-vortex}, if the parameters satisfies the numerical assumption 
\[
\tau \cdot \frac{\text{Vol}_{\omega_0}}{2\pi} 
> 
2\tilde N, 
\]
we obtain a solution $h_{\varepsilon, 0}$. The data $(\omega_{\varepsilon, 0}, h_{\varepsilon,0})$ will serve as the starting point for the main continuity path $\eqref{eq:rcdGV}_{\varepsilon,\alpha}$,  deforming the coupling constant $\alpha$. 

\section{Twisted gravitating vortex}\label{sect:main}

In this section, we study system when the RHS twisting forms are replaced by general closed $(1,1)$-forms $\eta$ and $\xi$.
Concretely speaking, we consider  the coupled equations: 



\begin{equation}\label{eqn:twisted-gravitating-vortex}
\left\{
\begin{array}{rcl}
	i F_h + \frac{1}{2} (|\bm\phi|_h^2 - \tau) \omega & =  & \eta\\
	\text{Ric }\omega - 2\alpha i\partial\bar\partial |\bm\phi|_h^2  
	+ \alpha\tau (|\bm\phi|_h^2 - \tau) \omega 
	& =  &
	\tilde c \omega + \xi
	\end{array}
	\right.
	\end{equation}
for real 2-forms $\eta= \sqrt{-1}\eta_{k\bar l}\mathrm{d}z^k\wedge \mathrm{d}\bar z^l=\sqrt{-1}\eta_{1\bar 1}\mathrm{d}z\wedge\mathrm{d}\bar z$, and $\xi= \sqrt{-1}\xi_{k\bar l}\mathrm{d}z^k\wedge\mathrm{d}\bar z^l=\sqrt{-1}\xi_{1\bar 1}\mathrm{d}z\wedge\mathrm{d}\bar z$. 

\begin{definition}[twisted gravitating vortex]\label{def:twisted-gravitating-vortices}
	A pair $(\omega,h)$ satisfying the system 
	$\eqref{eqn:twisted-gravitating-vortex}_{\alpha}$ is called \emph{twisted gravitating vortex} with twisting forms $\eta$ and $\xi$.  
	\end{definition}

Without loss of generality, we only focus on the K\"ahler class of volume $2\pi$, the other K\"ahler class parallels the arguments in this section. Still denote $\omega_0$ to be the constant curvature metric in this fixed K\"ahler class and $h_0$ to be the Hermitian metric of constant curvature on $L$. If $\eta\in b_\eta[\omega_0]$ and $\xi\in b_\xi[\omega_0]$ for real constant $b_\eta$ and $b_\xi$, then we could write
\begin{align*}
\eta
& = 
b_\eta \omega_0 + i\partial\bar\partial F_\eta\\
\xi
& = 
b_\xi \omega_0 + i\partial\bar\partial F_\xi
\end{align*}
Write $\omega=\omega_0+ 2i\partial\bar\partial u$ and $h= h_0e^{2f}= h_0 e^{-F_\eta} e^{2\tilde f}$.  The twisted gravitating vortex equations is equivalent to the semilinear elliptic PDE system:

\begin{equation}\label{eqn:twisted-GV-PDEs}
\begin{split}
\Delta \tilde f
+
\frac{1}{2} \left(\Phi -\tau \right) (1-\Delta u)
& = -( N - b_\eta) \\
\Delta u + e^{4\alpha\tau \tilde f - 2\alpha \Phi - 2\tilde c u}e^{-F_\xi} 
& = 1
\end{split}
\end{equation}
where $\Phi= |\bm\phi|_h^2 = |\bm\phi|^2 e^{-F_\eta} e^{2\tilde f}$. From now on, we assume $\eta\leq 0$ and $\xi\geq 0$, i.e. $\eta_{1\bar 1}\leq 0, \xi_{1 \bar 1}\geq 0$. Write $\tilde \chi = \chi - b_\xi$, $\tilde N= N - b_\eta$ and $\tilde c = \tilde \chi - 2\alpha\tau \tilde N$. We also assume $g(\Sigma)\geq 2$.  It will be clear the role played  by these assumptions in the openness and closedness argument. 


	\subsection{Openness}

	We are taking a more direct way here rather than the momentum map interpretation of  \cite{Al-Ga-Ga-P} studying the non twisted gravitating vortex equation.  For this part, we use the system \eqref{eqn:twisted-gravitating-vortex} instead of the PDE system \eqref{eqn:twisted-GV-PDEs}. 
	
	Fixing $k\geq 2$ be an integer. Let $L_p^2(\Sigma)$ be the space of functions on $\Sigma$ whose distributional derivatives up to order $p$ are square integrable. Denote $ \widehat{\mathcal{U}} \subset L_{k+4}^2(\Sigma)/\mathbb{R}$ to be the open neighborhood of $0$  consisting of $v$ such that $\omega=\omega_0+2i\partial\bar\partial v$ is a K\"ahler metric. Define a nonlinear differential operator
	\[
	\begin{array}{rll}
	\wp: \mathbb{R}\times L_{k+4}^2(\Sigma)\times \widehat{\mathcal{U}}
	& \longrightarrow &
	L_{k+2}^2(\Sigma)\times L_k^2(\Sigma)\\
	(\alpha, f, v)
	& \mapsto &
	(\wp_1(\alpha, f,v), \wp_2(\alpha, f,v))
	\end{array}
	\]
	
	where 
	\begin{align*}
	\wp_1(\alpha,f,v)
	& = 
	i\Lambda_\omega F_h +  \frac{1}{2}|\bm\phi|_h^2 - \frac{\tau}{2} - \Lambda_\omega \eta\\
	\wp_2(\alpha,f,v)
	&
	=   - S_\omega -\alpha \Delta_\omega |\bm\phi|_h^2 + 
	2\alpha\tau \Lambda_\omega (iF_h - \eta) +
	\tilde c + \Lambda_\omega \xi
	\end{align*}
	for $\omega=\omega_0+ 2i\partial\bar\partial v$ and $h=h_0 e^{2f}$. It is easily seen that $\wp$ is well-defined since $L_{k+4}^2(\Sigma)\subset C^0(\Sigma)$ by Sobolev embedding on $\Sigma$. This operator is order $2$ in $f$ and order $4$ in $v$. 
	
	\begin{proposition}
		\label{prop:kernal}
		The map $\wp$ is a $C^1$ map with Fr\'echet derivative (with respect to $(f,v)$) at $(\alpha,f,v)\in \mathbb{R}\times L_{k+4}^2(\Sigma)\times\widehat{\mathcal{U}}$ given by

		\[
		\begin{array}{rll}
		D\wp|_{(\alpha,f,v)}: L_{k+4}^2(\Sigma)\times L_{k+4}^2(\Sigma)/\mathbb{R}
		& \longrightarrow &
		L_{k+2}^2(\Sigma)\times L_k^2(\Sigma)\\
		(\dot f, \dot v)
		& \longmapsto &
		(		\delta \wp_1|_{(\alpha, f,v)} (\dot f, \dot v), 		\delta \wp_2|_{(\alpha, f,v)} (\dot f, \dot v))
		\end{array}
		\]
		
		where 
		\begin{equation}
		\label{eqn:linearization-operator}
		\begin{split}
		\delta \wp_1|_{(\alpha, f,v)} (\dot f, \dot v)
		& = 
		\Delta_\omega \dot f
		+ \frac{1}{2}(\tau - |\bm\phi|_h^2) \Delta_\omega \dot v
		+ \dot f |\bm\phi|_h^2
		+ \wp_1(\alpha, f,v) \Delta_\omega \dot v\; ; \\
		\delta \wp_2|_{(\alpha, f,v)}(\dot f, \dot v)
		& = 
		\frac{1}{2}\Delta_\omega^2 \dot v 
		- 
		\tilde c \Delta_\omega \dot v
		-
		2\alpha  \Delta_\omega ( \dot f|\bm\phi|_h^2) 
		+
		2\alpha\tau \Delta_\omega \dot f
		- \wp_2(\alpha, f,v) \Delta_\omega \dot v\; . 
		\end{split}
		\end{equation}

		In particular, suppose $\wp(\hat \alpha, \hat f, \hat v)=(0,0)$ with $\hat\alpha\geq 0$, then 
		$
		\ker D\wp|_{(\hat\alpha,\hat f, \hat v)}
		=
		\{(0,0)\}. 
		$
	\end{proposition}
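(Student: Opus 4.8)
The plan is to deduce the triviality of the kernel from a direct integration‑by‑parts estimate, in the spirit of the argument showing $\delta\mathcal{V}|_{(f_t,t)}$ is injective, rather than from the momentum‑map Hessian. Write $\Phi=|\bm\phi|_h^2$ for $h=h_0e^{2\hat f}$. Since we sit at a solution, $\wp_1=\wp_2=0$, so the terms $\wp_1\Delta_\omega\dot v$ and $\wp_2\Delta_\omega\dot v$ in \eqref{eqn:linearization-operator} vanish and the linearization reduces to
\[
\delta\wp_1(\dot f,\dot v)=\Delta_\omega\dot f+\tfrac12(\tau-\Phi)\Delta_\omega\dot v+\Phi\dot f,\qquad
\delta\wp_2(\dot f,\dot v)=\Delta_\omega\!\Big(\tfrac12\Delta_\omega\dot v-\tilde c\,\dot v-2\hat\alpha(\Phi-\tau)\dot f\Big).
\]
Two facts from the hypotheses will be used repeatedly. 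First, running the maximum principle on the first equation of \eqref{eqn:twisted-gravitating-vortex} exactly as in the $C^0$ bound for twisted vortices, the sign $\eta\le0$ forces $0\le\Phi\le\tau$ on $\Sigma$. Secondly, $g(\Sigma)\ge2$ together with $\xi\ge0$ gives $\tilde\chi=\chi-b_\xi<0$, while $\eta\le0$ gives $b_\eta\le0$ and hence $\tilde N=N-b_\eta\ge0$; therefore $\tilde c=\tilde\chi-2\hat\alpha\tau\tilde N<0$.

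Now take $(\dot f,\dot v)\in\ker D\wp|_{(\hat\alpha,\hat f,\hat v)}$. Because $\delta\wp_2=0$ and $\Delta_\omega$ has only constants in its kernel, the bracket in $\delta\wp_2$ is constant, and after replacing $\dot v$ by a suitable representative of its class in $L_{k+4}^2(\Sigma)/\mathbb{R}$ (possible since $\tilde c\neq0$) we may assume $\tfrac12\Delta_\omega\dot v-\tilde c\,\dot v-2\hat\alpha(\Phi-\tau)\dot f=0$, i.e. $\Delta_\omega\dot v=2\tilde c\,\dot v+4\hat\alpha(\Phi-\tau)\dot f$, while $\delta\wp_1=0$ reads $(\Delta_\omega+\Phi)\dot f=\tfrac12(\Phi-\tau)\Delta_\omega\dot v$. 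Pairing the second‑order identity with $\dot v$ and $\delta\wp_1=0$ with $\dot f$, and integrating by parts (using $\int_\Sigma u\,\Delta_\omega u\,\omega=2\int_\Sigma|\nabla u|^2\omega$), gives
\[
\int_\Sigma\big(|\nabla\dot v|^2-\tilde c\,\dot v^2\big)\omega=2\hat\alpha\int_\Sigma(\Phi-\tau)\dot f\dot v\,\omega,\qquad
\int_\Sigma\big(2|\nabla\dot f|^2+\Phi\dot f^2\big)\omega=\tfrac12\int_\Sigma(\Phi-\tau)\dot f\,\Delta_\omega\dot v\,\omega .
\]
Substituting $\Delta_\omega\dot v=2\tilde c\dot v+4\hat\alpha(\Phi-\tau)\dot f$ on the right of the second identity and then replacing $\int_\Sigma(\Phi-\tau)\dot f\dot v\,\omega$ by means of the first, all the coupling drops out and one is left with
\[
\int_\Sigma\big(2|\nabla\dot f|^2+\Phi\dot f^2\big)\omega+\frac{|\tilde c|}{2\hat\alpha}\int_\Sigma|\nabla\dot v|^2\omega+\frac{|\tilde c|^2}{2\hat\alpha}\int_\Sigma\dot v^2\omega=2\hat\alpha\int_\Sigma(\Phi-\tau)^2\dot f^2\,\omega .
\]
(For $\hat\alpha=0$ there is nothing to do: $\delta\wp_2=0$ already gives $\tfrac12\Delta_\omega\dot v-\tilde c\dot v=\mathrm{const}$, so $\dot v=0$ by $\tilde c<0$, and then $\delta\wp_1=0$ gives $\dot f=0$.)

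It remains to extract $\dot f\equiv0$ and $\dot v\equiv0$ from this identity, and this is the step I expect to be the main obstacle. Every term on the left is nonnegative; on the right one uses $0\le\Phi\le\tau$, which gives $(\Phi-\tau)^2\le\tau(\tau-\Phi)$, to trade the right‑hand side against $\int_\Sigma\Phi\dot f^2\omega$ and $\int_\Sigma\dot f^2\omega$, and one must then verify that, after this substitution, the left‑hand side strictly dominates — which is precisely where the sign conditions are decisive: $\tilde c<0$, the analytic manifestation of $g(\Sigma)\ge2$ (i.e. of the absence of nonzero holomorphic vector fields on $\Sigma$), keeps the $\dot v$‑block positive and coercive, and $0\le\Phi\le\tau$ (i.e. $\eta\le0$) controls the right‑hand side. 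Once $\dot v\equiv\mathrm{const}$, hence $\dot v=0$ in $L_{k+4}^2(\Sigma)/\mathbb{R}$, the equation $(\Delta_\omega+\Phi)\dot f=0$ together with $\int_\Sigma(2|\nabla\dot f|^2+\Phi\dot f^2)\omega=0$ and $\Phi>0$ away from the finitely many zeros of $\bm\phi$ yields $\dot f\equiv0$. Thus $\ker D\wp|_{(\hat\alpha,\hat f,\hat v)}=\{(0,0)\}$.
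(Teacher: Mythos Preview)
Your integration-by-parts identity
\[
\int_\Sigma\big(2|\nabla\dot f|^2+\Phi\dot f^2\big)\omega+\frac{|\tilde c|}{2\hat\alpha}\int_\Sigma|\nabla\dot v|^2\omega+\frac{\tilde c^2}{2\hat\alpha}\int_\Sigma\dot v^2\,\omega=2\hat\alpha\int_\Sigma(\Phi-\tau)^2\dot f^2\,\omega
\]
is derived correctly, but the step you flag as ``the main obstacle'' is a genuine gap, and the method you sketch does not close it. The bound $(\Phi-\tau)^2\le\tau(\tau-\Phi)$ only yields
\[
2\hat\alpha\int_\Sigma(\Phi-\tau)^2\dot f^2\,\omega\le 2\hat\alpha\tau^2\int_\Sigma\dot f^2\,\omega-2\hat\alpha\tau\int_\Sigma\Phi\dot f^2\,\omega,
\]
and since $\Phi$ vanishes on the zero set of $\bm\phi$ and the statement is supposed to hold for \emph{every} $\hat\alpha\ge0$, there is no way to absorb $2\hat\alpha\tau^2\int_\Sigma\dot f^2\,\omega$ into the left-hand side: the left side controls $\int_\Sigma\Phi\dot f^2$ and $\int_\Sigma|\nabla\dot f|^2$, neither of which dominates $\int_\Sigma\dot f^2$. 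Your appeal to $\tilde c<0$ and to ``the absence of nonzero holomorphic vector fields'' is only rhetorical here --- in your computation the genus hypothesis appears solely through the sign of $\tilde c$, and that sign alone is not enough. In particular, note that your identity makes no use whatsoever of the condition $\xi\ge0$ beyond the sign of $\tilde c$, whereas the statement genuinely needs it.

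What your approach is missing is a Bochner-type step that brings the curvature of $\hat\omega$ into play. The paper pairs $\dot v$ with the fourth-order expression $\delta\wp_2$ (not with its second-order antiderivative) and combines it with $4\hat\alpha\langle\dot f,\delta\wp_1\rangle$, then invokes the identity
\[
\int_\Sigma\Big(\tfrac12(\Delta_{\hat\omega}\dot v)^2-2S_{\hat\omega}|\nabla^{1,0}\dot v|^2\Big)\hat\omega=2\int_\Sigma|\bar\partial\nabla^{1,0}\dot v|^2\,\hat\omega
\]
together with the explicit formula for $S_{\hat\omega}$ at a solution, $S_{\hat\omega}=2\hat\alpha|\mathrm{d}_A\bm\phi|^2+\hat\alpha(\tau-\Phi)^2+\tilde c+\Lambda_{\hat\omega}(\xi-2\hat\alpha\Phi\,\eta)$. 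After this, \emph{all} the cross terms --- precisely the troublesome $(\tau-\Phi)^2$-piece that you could not absorb --- cancel against the scalar-curvature contribution, and one is left with a sum of nonnegative squares plus $2\langle\Lambda_{\hat\omega}(\xi-2\hat\alpha\Phi\,\eta),|\nabla^{1,0}\dot v|^2\rangle$, nonnegative by $\xi\ge0$, $\eta\le0$. The vanishing of $\|\bar\partial\nabla^{1,0}\dot v\|^2$ then forces $\nabla^{1,0}\dot v$ holomorphic, hence zero since $g(\Sigma)\ge2$; this is where the genus hypothesis really enters. The remaining vanishing of $\dot f$ follows as you indicate.
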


	\begin{proof}	
		Direct calculation (for the clarity and consistency of notations we adopt the familiar conventions in K\"ahler geometry) shows that 
		\begin{equation}
		\label{eqn:linearization-operator}
		\begin{split}
		\delta \wp_1|_{(\alpha, f,v)} (\dot f, \dot v)
		& = 
		\Delta_\omega \dot f - 2F^{i\bar j} \dot v_{i\bar j} 
		+ 
		\dot f |\bm\phi|_h^2 
		+
		2  \eta^{i\bar j} \dot v_{i\bar j} \\
		\delta \wp_2|_{(\alpha,f,v)}(\dot f, \dot v)
		& = 
		\frac{1}{2} \Delta_\omega^2 \dot v 
		+ 2 R^{i\bar j}\dot v_{ij}
		- 2\alpha \Delta_\omega \left( \dot f|\bm\phi|_h^2 \right)
		- 4\alpha \dot v^{i\bar j} \partial_i\partial_{\bar j} |\bm\phi|_h^2 \\
		& \qquad 
		+ 2\alpha\tau \left( 
		\Delta_\omega \dot f 
		- 2 ( F^{i\bar j} - \eta^{i\bar j}) \dot v_{i\bar j}	\right) 
		- 2\xi^{i\bar j}\dot v_{i\bar j}
		\end{split}
		\end{equation}
		
		The formula in the proposition follows from the definition of $\wp_1, \wp_2$ then. 	Looking at the coefficients of the linear differential operators $\delta \wp_1, \delta \wp_2$ and their dependence on $(\alpha, f,v)$, it is clear they are continuous with respect to the variables $(\alpha, f, v)$ in $\Hom(L_{k+4}^2\times L_{k+4}^2/\mathbb{R}, L_{k+2}^2\times L_k^2)$ under the operator norm. The directional derivative of $\wp$ in the direction of $\alpha\in\mathbb{R}$ is even more clearly to be continuous with respect to $(\alpha, f, v)$. We conclude that $\wp$ is $C^1$ and its Fr\'echet derivative is as claimed.

		Suppose now $\wp(\hat\alpha, \hat f,\hat v)=0$. Write $\hat\omega=\omega_0+2i\partial\bar\partial \hat v$, $\hat h=h_0e^{2\hat f}$, the two directional derivatives are simplified as 
		\begin{equation}
		\label{eqn:linearization-first-second}
		\begin{split}
		\delta \wp_1:=
		\delta \wp_1|_{(\hat \alpha, \hat f,\hat v)}(\dot f, \dot v)
		& = 
		\Delta_{\hat\omega} \dot f
		+ 
		\frac{1}{2} (\tau - |\bm\phi|_{\hat h}^2) \Delta_{\hat\omega} \dot v
		+ \dot f |\bm\phi|_{\hat h}^2\\
		\delta\wp_2:=
		\delta \wp_2|_{(\hat \alpha, \hat f,\hat v)}(\dot f, \dot v)
		& = 
		\frac{1}{2}\Delta_{\hat\omega}^2 \dot v 
		- 
		\tilde c \Delta_{\hat \omega} \dot v
		-
		2\hat\alpha  \Delta_{\hat\omega} ( \dot f|\bm\phi|_{\hat h}^2) 
		+
		2\hat\alpha\tau \Delta_{\hat\omega} \dot f
		\end{split}
		\end{equation}
		
		Let $\eta_{\dot v}$ be the Hamiltonian vector field generated by $\dot v$ under the symplectic form $\omega$, i.e. $\eta_{\dot v}\lrcorner \omega = \mathrm{d}\dot v$.  
		Notice that $\eta_{\dot v}=- \sqrt{-1} \dot v^k \frac{\partial}{\partial z_k} + \sqrt{-1} \dot v^{\bar k} \frac{\partial}{\partial \bar  z_k}
		$ and 
		\begin{equation}
		\label{eqn:contraction-term}
		\begin{split}
		\mathrm{d}\dot f + \eta_{\dot v}\lrcorner \left( iF_{\hat h} - \eta\right)
		& = 
		\left( 
		\dot f_k +  \frac{1}{2}(\tau-|\bm\phi|_{\hat h}^2) \dot v_k  \right)\mathrm{d}z^k 
		+ 
		\left( 
		\dot f_{\bar k} + \frac{1}{2}(\tau-|\bm\phi|_{\hat h}^2) \dot v_{\bar k}  \right)\mathrm{d}\bar{z}^k 
		\end{split}
		\end{equation}
		from which we obtain 
		\begin{equation}
		\begin{split}
		4\hat\alpha \int_\Sigma \left| \mathrm{d}\dot f + \eta_{\dot v}\lrcorner \left( iF_{\hat h} - \eta\right) \right|^2 \hat\omega 
		& = 
		8\hat\alpha 
		\int_\Sigma \left(  
		\dot f_k + \frac{1}{2}(\tau - |\bm\phi|_{\hat h}^2) \dot v_k
		\right)
		\left(  
		\dot f^k + \frac{1}{2}(\tau - |\bm\phi|_{\hat h}^2) \dot v^k
		\right) \hat\omega\\
		& = 
		8\hat\alpha \int |\nabla^{1,0}\dot f|^2 
		+ 2\hat\alpha \int (\tau - |\bm\phi|_{\hat h}^2 )^2 |\nabla^{1,0}\dot v|^2 
		+ 4\hat\alpha 
		\int (\tau - |\bm\phi|_{\hat h}^2) \left( 
		\dot f_k \dot v^k + \dot f^k \dot v_k
		\right)\\
		& = 
		8\hat\alpha \int |\nabla^{1,0}\dot f|^2 
		+ 2\hat\alpha \int (\tau - |\bm\phi|_{\hat h}^2 )^2 |\nabla^{1,0}\dot v|^2 \\
		& \qquad  + 4 \hat \alpha 
		\int (\tau - |\bm\phi|_{\hat h}^2) \dot f \Delta_\omega \dot v
		+ 4\hat\alpha \int \dot f \left(
		\partial_k |\bm\phi|_{\hat h}^2 \dot v^k + 
		\partial^k |\bm\phi|_{\hat h}^2 \dot v_k
		\right)
		\end{split}
		\end{equation}	
		
		On the other hand, 
		\begin{equation}
        J\eta_{\dot v} \lrcorner \mathrm{d}_A \bm\phi - \dot f \bm\phi
		= J ( -\sqrt{-1}\dot v^k \partial_k + \sqrt{-1} \dot v^{\bar k} \partial_{\bar k})\lrcorner (D_i\bm\phi \mathrm{d}z^i) - \dot f \bm\phi
		= \dot v^k D_k\bm\phi - \dot f\bm\phi		
		\end{equation}
		and thus 
		\begin{equation}
		\begin{split}
		\int \left| J\eta_{\dot v}\lrcorner \mathrm{d}_A \bm\phi - \dot f \bm\phi\right|_{\hat h}^2 
		& = 
		\int \left| \dot v^k D_k\bm\phi - \dot f \bm\phi\right|_{\hat h}^2\\ 
		&  = 
		\int \dot f^2 |\bm\phi|_{\hat h}^2 
		-  \dot f \langle \bm\phi, \dot v^k D_k\bm \phi\rangle 
		- \dot f\langle \dot v^k D_k\bm\phi, \bm\phi\rangle
		+ |\mathrm{d}_A\bm\phi|^2 |\nabla^{1,0}\dot v|^2 
		\end{split}
		\end{equation} 
		where $|\dot v^k D_k\bm\phi|^2 = |\mathrm{d}_A\bm\phi|^2 |\nabla^{1,0}\dot v|^2$ in this particular case where $\Sigma$ is complex $1$-dimensional.  Now, using the formula for $\delta\wp_1$ and $\delta\wp_2$, we have
		
		\begin{equation}
		\begin{split}
		4\hat\alpha\langle \dot f, \delta\wp_1\rangle_{L^2}
		+ \langle \dot v, \delta \wp_2 \rangle_{L^2}
		& := 
		4\hat \alpha\int_\Sigma  \dot f (\delta \wp_1) \hat\omega 
		+ 
		\int_\Sigma \dot v (\delta \wp_2)\hat\omega\\
		& = 
		8\hat\alpha \int |\nabla^{1,0}\dot f|^2 
		+ 
		2\hat\alpha \int (\tau - |\bm\phi|_{\hat h}^2) \dot f \Delta_{\hat\omega} \dot v 
		+ 
		4\hat\alpha \int \dot f^2 |\bm\phi|_{\hat h}^2 \\
		&\qquad + 
		\frac{1}{2}\int (\Delta_{\hat\omega} \dot v)^2 
		- 2\tilde c \int |\nabla^{1,0}\dot v|^2  
		- 2\hat\alpha \int \dot f |\bm\phi|_{\hat h}^2 \Delta_{\hat\omega} \dot v 
		+ 2\hat\alpha \tau 
		\int \dot f \Delta_{\hat\omega} \dot v\\
		& = 
		8\hat\alpha \int |\nabla^{1,0} \dot f|^2 
		+ 4\hat\alpha \int (\tau - |\bm\phi|_{\hat h}^2)  \dot f \Delta_{\hat\omega} \dot v 
		+ 4\hat\alpha \int \dot f^2 |\bm\phi|_{\hat h}^2 \\
		& \qquad + \frac{1}{2}\int (\Delta_{\hat\omega} \dot v)^2 
		- 2\tilde c \int |\nabla^{1,0} \dot v|^2\\
		& = 
		4\hat\alpha \int \left| \mathrm{d}\dot f + \eta_{\dot v}\lrcorner (iF_{\hat h}-\eta) \right|^2 
		+ 
		4\hat\alpha \int \left| J\eta_{\dot v}\lrcorner \mathrm{d}_A\bm\phi - \dot f\bm\phi\right|_{\hat h}^2  \\
		& \qquad + 
		\int 
		\frac{1}{2}(\Delta_{\hat\omega} \dot v)^2 
		- 2 \tilde c |\nabla^{1,0}\dot v|^2 
		-2\hat\alpha (\tau -|\bm\phi|_{\hat h}^2)^2|\nabla^{1,0}\dot v|^2 
		- 4\hat\alpha |\mathrm{d}_A\bm\phi|^2 |\nabla^{1,0}\dot v|^2 \\
		& =
		4\hat\alpha \int \left| \mathrm{d}\dot f + \eta_{\dot v}\lrcorner (iF_{\hat h}-\eta)\right|^2 
		+ 
		\left| J \eta_{\dot v}\lrcorner \mathrm{d}_A \bm\phi - \dot f\bm\phi\right|_{\hat h}^2 \\
		& \qquad + 
		\int 
		\frac{1}{2}(\Delta_{\hat \omega} \dot v)^2 
		- 2 |\nabla^{1,0}\dot v|^2  \left(
		\tilde c + \hat\alpha (\tau-|\bm\phi|_{\hat h}^2)^2 + 2\hat\alpha |\mathrm{d}_A\bm\phi|^2
		\right) \\
		& = 
		4\hat\alpha \int \left| \mathrm{d}\dot f + \eta_{\dot v}\lrcorner (iF_{\hat h}-\eta)\right|^2 
		+ 
		\left| J \eta_{\dot v}\lrcorner \mathrm{d}_A \bm\phi - \dot f\bm\phi\right|_{\hat h}^2 \\
		&\qquad + 
		\int \frac{1}{2}(\Delta_{\hat \omega}\dot v)^2 - 2 S_{\hat \omega} |\nabla^{1,0}\dot v|^2 
		+ 
		2\Lambda_{\hat\omega} \left( \xi - 2\hat\alpha |\bm\phi|_{\hat h}^2 \eta \right) |\nabla^{1,0} \dot v|^2
		\end{split}
		\end{equation}
		where we use the formula for scalar curvature
		\begin{equation}
		\label{eqn:scalar-curvature-formula}
		S_{\hat \omega}
		= 
		2\hat\alpha |\mathrm{d}_A\bm\phi|^2 + \hat\alpha (\tau - |\bm\phi|_{\hat h}^2)^2 + \tilde c 
		+ \Lambda_{\hat \omega} \left( \xi-2\hat\alpha |\bm\phi|_{\hat h}^2 \eta \right)
		\end{equation}
		derived from Equation \eqref{eqn:twisted-gravitating-vortex} directly.  In the final step, we notice that 
		\begin{align*}
		\int 
		\frac{1}{2}(\Delta_{\hat\omega}\dot v)^2 
		- 2 S_{\hat\omega} |\nabla^{1,0} \dot v|^2 
		& = 
		\int 2 \dot v \dot v^{i\phantom{i}j}_{\phantom{i}i\phantom{j}j} 
		- 2 S \dot v_i \dot v^i\\
		& = 
		\int 2 \dot v \left( 
		\dot v^{ij}_{\phantom{ij}i} - R^{j}_{\phantom{j}k}\dot v^k
		\right)_j - 2 S \dot v_i \dot v^i\\
		& = 
		\int 2 \dot v v^{ij}_{\phantom{ij}ij}
		- 2\dot v R^{i\bar j}\dot v_{i\bar j} 
		- 2\dot v S_k \dot v^k 
		+ 2 S_i \dot v^i \dot v 
		- S\dot v \Delta_{\hat\omega} \dot v\\
		& = 
		\int 2 \dot v^{ij}\dot v_{ij} 
		\end{align*}
		
		This concludes the formula (compare with \cite[Equation (3.7)]{FPY})
		\begin{equation}
		\label{eqn:kernal-inner-product}
		\begin{split}
		4\hat\alpha\langle \dot f, \delta\wp_1\rangle_{L^2}
		+ \langle \dot v, \delta \wp_2 \rangle_{L^2}
		& = 
		4\hat\alpha |\!| \mathrm{d}\dot f + \eta_{\dot v}\lrcorner (iF_{\hat h}-\eta)|\!|^2  
		+ 4\hat\alpha |\!| J\eta_{\dot v}\lrcorner \mathrm{d}_A \bm\phi - \dot f\bm\phi|\!|^2 \\
		& \qquad+ 2 |\!| \bar\partial \nabla^{1,0}\dot v|\!|^2 
		+ 2 \langle \Lambda_{\hat\omega} (\xi - 2\hat\alpha |\bm\phi|_{\hat h}^2 \eta), |\nabla^{1,0}\dot v|^2\rangle
		\end{split}
		\end{equation}
		
		Assuming $\xi$ and $-\eta$ are both nonnegative $(1,1)$-forms, then $D \wp|_{(\hat \alpha, \hat f, \hat v)}(\dot f, \dot v)=(0,0)$  implies $\nabla^{1,0}\dot v$ is a holomorphic vector field on $\Sigma$ and thus $\dot v=const$ by the assumption $g(\Sigma)\geq 2$. In the case $\hat \alpha> 0$, the vanishing of the second term of the right hand side of Equation \eqref{eqn:kernal-inner-product} implies $J\eta_{\dot v}\lrcorner \mathrm{d}_A\bm\phi - \dot f \bm\phi=0$ which then forces $\dot f\equiv 0$. 
		In the case $\hat \alpha=0$, the second equation of \eqref{eqn:linearization-first-second} shows $\Delta_{\hat\omega} \dot v\equiv 0$, and the first equation reduces to the case of twisted vortices,  it follows from equation \eqref{eqn:integral-vanishing} that $\dot f\equiv 0$. 
		\end{proof}
	
	\begin{remark}
		\label{formula:scalar-curvature-twisted}
		More generally, at a general pair $(f, v)$, by replacing the left hand side of Equation \eqref{eqn:contraction-term} with $$\mathrm{d}\dot f + \eta_{\dot v}\lrcorner \left( iF_h - \eta - \wp_1\omega\right)$$ and replacing the formula \eqref{eqn:scalar-curvature-formula} of scalar curvature with 
		\[
		S_\omega
		= 
		2\alpha(\tau - |\bm\phi|_h^2) \wp_1 - \wp_2 
		+ \Lambda_\omega (\xi - 2\alpha |\bm\phi|_h^2 \eta) 
		+ 
		2\alpha |\mathrm{d}_A \bm\phi|^2 + \alpha (\tau - |\bm\phi|_h^2)^2 + \tilde c,
		\]
		we derive that the Fr\'echet derivative satisfies
		
		\begin{equation}
		\begin{split}
		4\alpha\langle \dot f,  \delta \wp_1|_{( \alpha, f, v)}(\dot f, \dot v)\rangle_{L^2}
		& + \langle \dot v, \delta \wp_1|_{(\alpha, f, v)}(\dot f, \dot v) \rangle_{L^2}\\
		& = 
		4\alpha |\!| \mathrm{d}\dot f + \eta_{\dot v}\lrcorner \left( iF_h-\eta - \wp_1\omega
		\right)|\!|^2  
		+ 4\alpha |\!| J\eta_{\dot v}\lrcorner \mathrm{d}_A \bm\phi - \dot f\bm\phi|\!|^2 \\
		& 
		\qquad+ 2 |\!| \bar\partial \nabla^{1,0}\dot v|\!|^2 
		+ 2 \langle \Lambda_\omega (\xi - 2\alpha |\bm\phi|_h^2 \eta), |\nabla^{1,0}\dot v|^2\rangle\\
		& 
		\qquad +   4\alpha \int_\Sigma\wp_1 \big(
		\dot f \Delta_\omega \dot v 
		+ 
		(\tau - |\bm\phi|_h^2) |\nabla^{1,0}\dot v|^2 
		\big)\omega
		- 
		\int_\Sigma\wp_2 (
		\dot v \Delta_\omega \dot v + 2|\nabla^{1,0} \dot v|^2
		)\omega\;.
		\end{split}
		\end{equation}
		This formula is not needed in this paper, however it may be useful in further studies.
		
	\end{remark}

	\begin{proposition}
		\label{prop:image}
		Suppose $\wp(\hat \alpha, \hat f, \hat v)=(0,0)$ with $\hat\alpha\geq 0$, then
		\[
		\Im D\wp|_{(\hat\alpha,\hat f,\hat v)}
		= \{
		(\mathfrak{q}_1, \mathfrak{q}_2)|
		\mathfrak{q}_1 \in L_{k+2}^2(\Sigma), 
		\mathfrak{q}_2\in \mathring{L}_k^2(\Sigma,\hat \omega)
		\}\;,
		\]
		where	$\mathring{L}_{k}^2(\Sigma,\hat\omega)=\{
		\mathfrak{q}\in L_k^2(\Sigma)| \int_\Sigma \mathfrak{q} \hat\omega=0
		\}$. 
	\end{proposition}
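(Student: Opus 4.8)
The plan is to read the proposition off the Fredholm theory of the linearisation $D\wp:=D\wp|_{(\hat\alpha,\hat f,\hat v)}$. Write $\hat\omega=\omega_0+2i\partial\bar\partial\hat v$, $\hat h=h_0e^{2\hat f}$, and use the simplified expressions for $\delta\wp_1,\delta\wp_2$ in \eqref{eqn:linearization-first-second}, valid because $\wp(\hat\alpha,\hat f,\hat v)=(0,0)$. I would establish three facts: (i) $\Im D\wp\subseteq L_{k+2}^2(\Sigma)\times\mathring L_k^2(\Sigma,\hat\omega)$; (ii) $D\wp$ is Fredholm of index $-1$; (iii) $\ker D\wp=\{0\}$, which is Proposition \ref{prop:kernal}. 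Granting these, $\dim\operatorname{coker}D\wp=\dim\ker D\wp-\operatorname{index}D\wp=1$, so $\Im D\wp$ is closed of codimension one; being contained in the codimension-one subspace in (i), it must coincide with it.

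Statement (i) is the easy part. By \eqref{eqn:linearization-first-second}, $\delta\wp_2(\dot f,\dot v)=\Delta_{\hat\omega}\bigl(\tfrac12\Delta_{\hat\omega}\dot v-\tilde c\,\dot v-2\hat\alpha|\bm\phi|_{\hat h}^2\dot f+2\hat\alpha\tau\dot f\bigr)$ is a total $\hat\omega$-Laplacian for every $(\dot f,\dot v)$, hence $\int_\Sigma\delta\wp_2(\dot f,\dot v)\,\hat\omega=0$; equivalently, $\int_\Sigma\wp_2(\alpha,f,v)\,\omega$ is a constant independent of $(f,v)$ --- precisely the normalisation defining $\tilde c$ --- so its variation at a zero of $\wp_2$ vanishes. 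No analogous constraint binds the first component, since $\int_\Sigma\wp_1(\alpha,f,v)\,\omega$ really depends on $(f,v)$ through $\int_\Sigma|\bm\phi|_h^2\,\omega$.

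For (ii) I would run a Schur-complement reduction. Since $\bm\phi\not\equiv0$, the self-adjoint operator $\dot f\mapsto\Delta_{\hat\omega}\dot f+|\bm\phi|_{\hat h}^2\dot f$ has trivial kernel --- this is exactly the integration in \eqref{eqn:integral-vanishing} --- so it is an isomorphism, with inverse $G\colon L_{k+2}^2(\Sigma)\to L_{k+4}^2(\Sigma)$. The equation $\delta\wp_1(\dot f,\dot v)=\mathfrak q_1$ then determines $\dot f=G\mathfrak q_1+B\dot v$, where $B:=-\tfrac12\,G\circ\bigl((\tau-|\bm\phi|_{\hat h}^2)\,\Delta_{\hat\omega}\bigr)$ is a pseudodifferential operator of order $0$. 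Substituting into $\delta\wp_2$ collapses the system to the single equation $\mathsf M\dot v=\mathfrak q_2-\delta\wp_2(G\mathfrak q_1,0)$, where $\mathsf M\dot v:=\delta\wp_2(B\dot v,\dot v)=\tfrac12\Delta_{\hat\omega}^2\dot v+(\text{a pseudodifferential operator of order}\le2\text{ applied to }\dot v)$. The operator $\mathsf M\colon L_{k+4}^2(\Sigma)\to L_k^2(\Sigma)$ is elliptic of order four with scalar principal symbol of order four, hence Fredholm; deforming it to $\tfrac12\Delta_{\hat\omega}^2$ along operators sharing that principal symbol keeps it Fredholm, so $\operatorname{index}\mathsf M=\operatorname{index}\bigl(\tfrac12\Delta_{\hat\omega}^2\bigr)=1-1=0$. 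Its kernel is exactly the constants: if $\mathsf M\dot v=0$ then $(B\dot v,\dot v)$ solves $D\wp(\cdot)=0$, so by Proposition \ref{prop:kernal} $\dot v$ is constant and $B\dot v=0$; conversely constants lie in $\ker\mathsf M$. Therefore $\dim\operatorname{coker}\mathsf M=1$, and since $\Im\mathsf M\subseteq\mathring L_k^2(\Sigma,\hat\omega)$ (again a total Laplacian) we get $\Im\mathsf M=\mathring L_k^2(\Sigma,\hat\omega)$. As $\delta\wp_2(G\mathfrak q_1,0)$ is always a total Laplacian, the reduced equation is solvable whenever $\mathfrak q_2\in\mathring L_k^2(\Sigma,\hat\omega)$; elliptic regularity then puts $\dot v\in L_{k+4}^2(\Sigma)$ and hence $\dot f=G\mathfrak q_1+B\dot v\in L_{k+4}^2(\Sigma)$. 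This proves surjectivity of $D\wp$ onto the subspace in (i), which together with (i) already yields the proposition; chasing the cokernels also records $\operatorname{index}D\wp=-1$.

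The main obstacle is the cokernel count for the reduced operator $\mathsf M$ --- establishing at once that $\operatorname{index}\mathsf M=0$ and $\dim\ker\mathsf M=1$. The index is pinned down by the homotopy to $\tfrac12\Delta_{\hat\omega}^2$, which deforms only subleading (possibly pseudodifferential) terms, so ellipticity and Fredholmness persist; the kernel is controlled by feeding solutions back into Proposition \ref{prop:kernal}, which is where the hypotheses $g(\Sigma)\ge2$ and $\eta\le0$, $\xi\ge0$ re-enter, through the identity \eqref{eqn:kernal-inner-product}. An alternative that sidesteps pseudodifferential operators is to treat $D\wp$ directly as a Douglis--Nirenberg elliptic system with equation weights $(0,2)$ and unknown weights $(2,2)$ --- its symbol determinant is a nonzero multiple of $|\zeta|^6$ for a nonzero cotangent covector $\zeta$ --- and to compute $\operatorname{index}D\wp=-1$ by contracting all subleading terms, deforming $D\wp$ to $\operatorname{diag}\bigl(\Delta_{\hat\omega},\tfrac12\Delta_{\hat\omega}^2\bigr)$ on $L_{k+4}^2(\Sigma)\times\bigl(L_{k+4}^2(\Sigma)/\mathbb{R}\bigr)$, whose two summands have index $0$ and $-1$ respectively.
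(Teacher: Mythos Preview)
Your proof is correct but follows a genuinely different route from the paper's. The paper first ``integrates'' the second equation: given $\mathfrak q_2\in\mathring L_k^2(\Sigma,\hat\omega)$, it solves $\Delta_{\hat\omega}\mathfrak p=\mathfrak q_2$ and rewrites the system $D\wp(\dot f,\dot v)=(\mathfrak q_1,\mathfrak q_2)$ as a coupled \emph{second-order} elliptic system $\Upsilon(\dot f,\dot v)=(\mathfrak q_1-(\tau-|\bm\phi|_{\hat h}^2)\mathfrak p,\;\mathfrak p)$, where $\Upsilon_1=\Delta_{\hat\omega}\dot f+(|\bm\phi|_{\hat h}^2-2\hat\alpha(\tau-|\bm\phi|_{\hat h}^2)^2)\dot f+\tilde c(\tau-|\bm\phi|_{\hat h}^2)\dot v$ and $\Upsilon_2=\tfrac12\Delta_{\hat\omega}\dot v-\tilde c\,\dot v+2\hat\alpha(\tau-|\bm\phi|_{\hat h}^2)\dot f$. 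It then runs the index homotopy in the \emph{coupling constant} $\alpha\in[0,\hat\alpha]$: at $\alpha=0$ the system decouples and each factor is invertible because $\tilde c<0$, giving $\operatorname{index}\Upsilon=0$; injectivity comes from Proposition~\ref{prop:kernal}. By contrast, you eliminate $\dot f$ via the Schur complement through the inverse $G$ of $\Delta_{\hat\omega}+|\bm\phi|_{\hat h}^2$, obtaining a single fourth-order pseudodifferential operator $\mathsf M$, and compute its index by contracting subleading terms to $\tfrac12\Delta_{\hat\omega}^2$. The paper's argument stays entirely within classical differential operators and is shorter, but it explicitly consumes the hypothesis $\tilde c<0$ at the $\alpha=0$ endpoint (as the Remark following the proof records: the argument fails for $\tilde c=0$, $\hat\alpha=0$). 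Your argument trades this for a mild dose of pseudodifferential calculus and never invokes $\tilde c<0$ directly; it uses only Proposition~\ref{prop:kernal}, so it would in principle survive at $\tilde c=0$ as well. Your Douglis--Nirenberg alternative is also sound and closest in spirit to the paper's reduction, since both land on a genuine second-order elliptic system (yours by weighting, the paper's by pre-inverting $\Delta_{\hat\omega}$).
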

	
	\begin{proof}
		Given $\mathfrak{q}_1 \in L_{k+2}^2(\Sigma), 
		\mathfrak{q}_2\in \mathring{L}_k^2(\Sigma,\hat \omega)$, let
		$\mathfrak{p}\in L_{k+2}^2(\Sigma)$ satisfy $\int_\Sigma \mathfrak{p}\hat\omega=0$ and 
		$$
		\Delta_{\hat\omega}\mathfrak{p}
		=
		\mathfrak{q}_2. 
		$$
		Using Equation \eqref{eqn:linearization-first-second}, to prove the proposition, it suffices to find $(\dot f, \dot v)\in L_{k+4}^2\times L_{k+4}^2$ such that 
		\begin{equation}
		\label{eqn:transformed-equations}
		\begin{split}
		\Upsilon_1(\dot f, \dot v)
		& :=
		\Delta_{\hat \omega} \dot f
		+ 
		\left(
		|\bm\phi|_{\hat h}^2 
		-
		2\hat\alpha (\tau - |\bm\phi|_{\hat h}^2)^2 
		\right)\dot f
		+ \tilde c (\tau - |\bm\phi|_{\hat h}^2 ) \dot v
		 = 
		\mathfrak{q}_1 
		-
		(\tau - |\bm\phi|_{\hat h}^2) \mathfrak{p}\\
		\Upsilon_2(\dot f, \dot v)
		& :=
		\frac{1}{2}
		\Delta_{\hat \omega} \dot v
		- \tilde c \dot v
		+ 
		2\hat\alpha(\tau - |\bm\phi|_{\hat h}^2) \dot f
	  =
		\mathfrak{p}
		\end{split}
		\end{equation}
		The map $\Upsilon=(\Upsilon_1, \Upsilon_2): L_{k+4}^2\times L_{k+4}^2 
		\longrightarrow 
		L_{k+2}^2 \times L_{k+2}^2 	$ is a linear elliptic differential operator of order $2$, with $\ker \Upsilon=\{(0,0)\}$ according to Proposition \ref{prop:kernal}. By varying $\alpha\in [0,\hat \alpha]$, we can view $\Upsilon$ as a continuous family of Fredholm operators. At $\alpha=0$, the operator $\Upsilon$ is both injective and surjective (since $\Upsilon_1$ and $\Upsilon_2$ decouples thus could be solved independently because our assumption is $\tilde c<0$) and thus has index $0$. By the homotopy invariance of index, we conclude that Equation \eqref{eqn:transformed-equations} is uniquely solvable.  The solution $(\dot f, \dot v)$ satisfies the elliptic estimate 
		\begin{equation}
		\begin{split}
		|\!| \dot f|\!|_{L_{k+4}^2} 
		+
		|\!| \dot v|\!|_{L_{k+4}^2}
		& \leq 
		C 
		\Big(
		|\!| \mathfrak{q}_1 
		- (\tau - |\bm\phi|_{\hat h}^2) \mathfrak{p}|\!|_{L_{k+2}^2}
		+ 
		|\!|\mathfrak{p}|\!|_{L_{k+2}^2}
		\Big)\\
		& \leq 
		C\Big(
		|\!| \mathfrak{q}_1|\!|_{L_{k+2}^2}
		+ 
		|\!| \mathfrak{q}_2|\!|_{L_k^2}
		\Big). 
		\end{split}
		\end{equation}		
	\end{proof}

\begin{remark}
In this argument, we use the assumption $\tilde c<0$. The argument fails for the case $\tilde c=0$ and $\hat\alpha=0$. 
\end{remark}	
	\begin{proposition}[openness]
		\label{thm:openness}
		Suppose the twisted gravitating vortex equations \eqref{eqn:twisted-gravitating-vortex} admits a solution $(\hat\omega, \hat h)=(\omega_0+2i\partial\bar\partial \hat v, h_0e^{2\hat f})$ with $\hat \alpha\geq 0$, then there exists $\varepsilon>0$ such that for all $\alpha\in (\hat \alpha-\varepsilon, \hat\alpha+\varepsilon)$, there exists a unique pair  $(f_\alpha,v_\alpha)\in L_{k+4}^2(\Sigma)\times \widehat{\mathcal{U}}$ such that 
		$(\omega_\alpha,  h_\alpha)=(\omega_0+2i\partial\bar\partial v_\alpha, h_0e^{2 f_\alpha})$ satisfies the Equation \eqref{eqn:twisted-gravitating-vortex}, $(f_\alpha,v_\alpha)$ is $C^1$ in $\alpha$ with $(f_{\hat \alpha}, h_{\hat \alpha})=(\hat f, \hat v)$. 
	\end{proposition}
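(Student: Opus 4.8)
The plan is to obtain Proposition \ref{thm:openness} as an application of the implicit function theorem in Banach spaces to the operator $\wp$ at $(\hat\alpha,\hat f,\hat v)$, the required input being supplied by Propositions \ref{prop:kernal} and \ref{prop:image}: together they assert that $D\wp|_{(\hat\alpha,\hat f,\hat v)}$ is an \emph{injective} bounded operator $L_{k+4}^2(\Sigma)\times L_{k+4}^2(\Sigma)/\mathbb{R}\to L_{k+2}^2(\Sigma)\times L_k^2(\Sigma)$ whose image is the closed codimension-one subspace $L_{k+2}^2(\Sigma)\times\mathring{L}_k^2(\Sigma,\hat\omega)$, hence a Banach-space isomorphism onto that subspace (bounded inverse theorem). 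The one genuine issue is that $\wp$ itself takes values in the \emph{larger} space $L_{k+2}^2(\Sigma)\times L_k^2(\Sigma)$, so the implicit function theorem cannot be invoked verbatim; the remedy is to cut the target down to a fixed codimension-one subspace and to verify that no solutions are lost in doing so.

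The key fact for the second point is the Chern--Weil type identity
\[
\int_\Sigma \wp_2(\alpha,f,v)\,\omega_v = 0, \qquad \omega_v:=\omega_0+2i\partial\bar\partial v,
\]
valid for \emph{every} $(\alpha,f,v)\in\mathbb{R}\times L_{k+4}^2(\Sigma)\times\widehat{\mathcal{U}}$, with no assumption that $\wp_1$ or $\wp_2$ vanish. Indeed, integrating the defining formula for $\wp_2$ over $\Sigma$ kills the $\Delta_{\omega_v}|\bm\phi|_h^2$-term, while the remaining terms are all topological: $\int_\Sigma S_{\omega_v}\,\omega_v=2\pi\chi$ (Gauss--Bonnet, in the convention $S_\omega=\Lambda_\omega\mathrm{Ric}\,\omega$ used to define $\wp_2$), $\int_\Sigma iF_h=2\pi N$, $\int_\Sigma\eta=2\pi b_\eta$, $\int_\Sigma\xi=2\pi b_\xi$ and $\Vol_{\omega_v}=\Vol_{\omega_0}=2\pi$, so the integral reduces to $-2\pi\chi+4\pi\alpha\tau\tilde N+2\pi\tilde c+2\pi b_\xi$, which vanishes precisely because $\tilde c=\tilde\chi-2\alpha\tau\tilde N$ and $\tilde\chi=\chi-b_\xi$. (This is exactly the computation that pins down the value of $\tilde c$ in the first place.)

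With this in hand I would replace $\wp$ by
\[
\widetilde\wp(\alpha,f,v)=\Big(\wp_1(\alpha,f,v),\ \wp_2(\alpha,f,v)-\tfrac{1}{\Vol_{\hat\omega}}\int_\Sigma\wp_2(\alpha,f,v)\,\hat\omega\Big),
\]
a $C^1$ map (by Proposition \ref{prop:kernal}) into the \emph{fixed} Banach space $L_{k+2}^2(\Sigma)\times\mathring{L}_k^2(\Sigma,\hat\omega)$. Since the image of $D\wp|_{(\hat\alpha,\hat f,\hat v)}$ already has $\hat\omega$-mean-zero second component (Proposition \ref{prop:image}), the $\hat\omega$-averaging correction has vanishing differential at $(\hat\alpha,\hat f,\hat v)$, so $D\widetilde\wp|_{(\hat\alpha,\hat f,\hat v)}=D\wp|_{(\hat\alpha,\hat f,\hat v)}$ is exactly the isomorphism described above. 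The implicit function theorem then yields $\varepsilon>0$ and a $C^1$ curve $\alpha\mapsto(f_\alpha,v_\alpha)$ on $(\hat\alpha-\varepsilon,\hat\alpha+\varepsilon)$, locally unique near $(\hat f,\hat v)$, with $(f_{\hat\alpha},v_{\hat\alpha})=(\hat f,\hat v)$ and $\widetilde\wp(\alpha,f_\alpha,v_\alpha)=0$; after shrinking $\varepsilon$ we keep $v_\alpha\in\widehat{\mathcal{U}}$ since $\widehat{\mathcal{U}}$ is open. Finally, $\widetilde\wp(\alpha,f_\alpha,v_\alpha)=0$ means $\wp_1(\alpha,f_\alpha,v_\alpha)=0$ and $\wp_2(\alpha,f_\alpha,v_\alpha)\equiv c_\alpha$ for the real constant $c_\alpha=\tfrac{1}{\Vol_{\hat\omega}}\int_\Sigma\wp_2(\alpha,f_\alpha,v_\alpha)\,\hat\omega$; feeding the constant function $\wp_2\equiv c_\alpha$ into the identity of the previous paragraph gives $0=\int_\Sigma c_\alpha\,\omega_{v_\alpha}=2\pi c_\alpha$, so $c_\alpha=0$ and $\wp(\alpha,f_\alpha,v_\alpha)=(0,0)$. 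Hence $(\omega_\alpha,h_\alpha)=(\omega_0+2i\partial\bar\partial v_\alpha,h_0e^{2f_\alpha})$ solves \eqref{eqn:twisted-gravitating-vortex}, with local uniqueness inherited from the implicit function theorem. The main obstacle here is precisely the codimension-one gap between the target of $\wp$ and the image of its linearization; once it is handled by this projection-plus-identity device, the rest is a routine invocation of the implicit function theorem together with the mapping properties of $D\wp$ already established. (If smoothness of $(\omega_\alpha,h_\alpha)$ is also wanted, a standard elliptic bootstrap on \eqref{eqn:twisted-GV-PDEs} supplies it, but it is not needed for the stated conclusion.)
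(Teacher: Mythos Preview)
Your proposal is correct and follows essentially the same route as the paper: replace $\wp$ by the $\hat\omega$-mean-zero projection $\widetilde\wp$, use Propositions~\ref{prop:kernal} and~\ref{prop:image} to see that $D\widetilde\wp|_{(\hat\alpha,\hat f,\hat v)}=D\wp|_{(\hat\alpha,\hat f,\hat v)}$ is an isomorphism onto $L_{k+2}^2\times\mathring{L}_k^2(\Sigma,\hat\omega)$, apply the implicit function theorem, and then kill the resulting constant $c_\alpha$ via the topological identity $\int_\Sigma\wp_2\,\omega_v=0$ (which the paper phrases as ``$\wp_2=0$ with $\tilde c=\tilde c(\alpha)$''). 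Your write-up is in fact more explicit than the paper's about why the averaging correction has vanishing differential at the base point and why the constant must vanish, but the argument is the same.
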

	
	\begin{proof}The modified map 
		\[
		\begin{array}{rll}
		\tilde\wp: \mathbb{R}\times L_{k+4}^2(\Sigma)\times \widehat{\mathcal{U}}
		& \longrightarrow &
		L_{k+2}^2(\Sigma)\times  \mathring{L}_k^2(\Sigma, \hat\omega)\\
		(\alpha, f, v)
		& \mapsto &
		(\wp_1(\alpha, f,v), \wp_2(\alpha, f,v)
		-\frac{1}{2\pi}\int_\Sigma \wp_2(\alpha, f, v)\hat \omega
		)
		\end{array}
		\]
		is $C^1$ with the same Fr\'echet derivative (with respect to $(f,v)$) at $(\hat\alpha, \hat f, \hat v)$ as $\wp$ since $\int_\Sigma \delta\wp_2 \hat\omega=0$.  By Proposition \ref{prop:kernal} and \ref{prop:image},  $D\tilde\wp|_{(\hat\alpha, \hat f, \hat v)}$ is an isomorphism between the Sobolev spaces 
		$L_{k+4}^2(\Sigma)\times \mathring{L}_{k+4}^2(\Sigma, \hat\omega)$ and $L_{k+2}^2(\Sigma)\times \mathring{L}_k^2(\Sigma, \hat \omega)$.
		Then, the implicit function theorem in Banach spaces \cite[Appendix]{DK} shows that there exists $\varepsilon>0$ and a $C^1$ path $\{(f_\alpha, v_\alpha)\in L_{k+4}^2(\Sigma)\times \mathring{L}_{k+4}^2(\Sigma, \hat\omega)| \alpha\in (\hat\alpha-\varepsilon, \hat\alpha+\varepsilon)\}$ such that $\tilde\wp(\alpha, f_\alpha, v_\alpha)=(0,0)$. This further implies $\wp_1(\alpha, f_\alpha, v_\alpha)=0$ and $\wp_2(\alpha, f_\alpha, v_\alpha)=const$. It follows then $\wp_2(\alpha, f_\alpha, v_\alpha)=0$ with $\tilde c=\tilde c(\alpha)= 2-2g(\Sigma) - 2\alpha\tau c_1(L)\cdot [\Sigma] - \frac{1}{2\pi}([\xi]-2\alpha\tau [\eta])\cdot [\Sigma]$.  
	\end{proof}


\subsection{A priori estimates and Closedness}
\label{sect:a priori}~

To derive useful a priori estimates, we mainly use the PDE system \eqref{eqn:twisted-GV-PDEs} in this section. Let $\Phi= |\bm\phi|_{h}^2$ as above, then there is a $C^0$ bound on $\Phi$ as in \cite[Lemma 4.1]{FPY}. 
	\begin{proposition}
		\label{prop:Phi-estimate}
		\[
		0\leq \Phi  \leq \tau
		\]
	\end{proposition}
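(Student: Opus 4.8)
The bound $\Phi\geq 0$ is automatic since $\Phi=|\bm\phi|_h^2$ is a pointwise squared norm, so the plan is to obtain $\Phi\leq\tau$ by a maximum principle argument, adapting \cite[Lemma 4.1]{FPY} and checking that the twisting form $\eta$ enters with the favourable sign. If $\bm\phi\equiv 0$ there is nothing to prove, so I would assume $\bm\phi\not\equiv 0$; then its zero set $Z$ is finite, $\log\Phi$ is smooth on $\Sigma\setminus Z$ and $\log\Phi\to-\infty$ near each point of $Z$, so the strictly positive maximum of $\Phi$ on the compact surface $\Sigma$ is attained at some point $p\in\Sigma\setminus Z$.

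The computation I would carry out is the traced first equation of $\eqref{eqn:twisted-gravitating-vortex}_{\alpha}$. Since $\bm\phi$ is holomorphic, $\log|\bm\phi|^2$ is pluriharmonic on $\Sigma\setminus Z$, hence $\sqrt{-1}\partial\bar\partial\log\Phi=-\sqrt{-1}F_h$ there; contracting with $\omega$ and inserting $\sqrt{-1}\Lambda_\omega F_h=\tfrac12(\tau-\Phi)+\Lambda_\omega\eta$, which is exactly the first equation traced, gives, with the paper's sign convention $\Delta_\omega=-2\Lambda_\omega\sqrt{-1}\partial\bar\partial$,
\begin{equation*}
\Delta_\omega\log\Phi=2\sqrt{-1}\Lambda_\omega F_h=(\tau-\Phi)+2\Lambda_\omega\eta\qquad\text{on }\Sigma\setminus Z.
\end{equation*}
Evaluating at $p$, where $\Delta_\omega\log\Phi(p)\geq 0$, yields $\Phi(p)\leq\tau+2\Lambda_\omega\eta(p)$. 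Now the standing assumption $\eta\leq 0$ enters: on a Riemann surface $\Lambda_\omega\eta=\eta_{1\bar 1}/\omega_{1\bar 1}\leq 0$, so $\Phi(p)\leq\tau$, and hence $0\leq\Phi\leq\Phi(p)\leq\tau$ everywhere.

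I do not expect a genuine obstacle: this is exactly the argument used in the untwisted case $\eta=0$, and the twisting only helps since $-2\Lambda_\omega\eta\geq 0$. The one point needing care is the degeneracy of $\log\Phi$ at the zeros of $\bm\phi$, handled either as above via $\log\Phi\to-\infty$ there, or equivalently by running the maximum principle on $\Phi$ itself, which off $Z$ satisfies the exact analogue of \eqref{eqn:maximum-principle} with the extra term $2\Lambda_\omega\eta\leq 0$ on the right-hand side and which extends continuously by $0$ across $Z$, so its maximum on $\Sigma$ is attained. Neither the second equation of $\eqref{eqn:twisted-gravitating-vortex}_{\alpha}$ nor the sign of $\xi$ plays any role in this estimate.
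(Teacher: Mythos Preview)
Your argument is correct and is essentially the paper's own proof: both apply the maximum principle to the first equation of \eqref{eqn:twisted-gravitating-vortex}, obtaining $\Phi\leq\tau+2\Lambda_\omega\eta\leq\tau$ at the maximum point using $\eta\leq 0$. The only cosmetic difference is that the paper works directly with $\Phi$ (writing $\Delta_\omega\Phi=-2|\nabla\Phi|^2/\Phi+\Phi(\tau-\Phi)+2\Phi\Lambda_\omega\eta$) rather than with $\log\Phi$, but you explicitly mention this variant as well, and the two are equivalent.
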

	
	\begin{proof}
		The function $\Phi$ satisfies the equation 
		\[
		\Delta_{\omega} \Phi
		=
		- 2 \frac{|\nabla \Phi|^2}{\Phi} + \Phi(\tau - \Phi) + 2\Phi \Lambda_\omega \eta
		\]
		
		At the maximum of $\Phi$ (which is definitely positive), under the assumption $\eta\leq 0$ we could derive
		\[
		\Phi\leq \tau + 2\Lambda_\omega \eta\leq \tau. 
		\]
	\end{proof}

	Before we proceed to $C^0$ estimates of the potential functions $u$ and $\tilde f$, by applying Jensen's inequality to the second and the first equation of \eqref{eqn:twisted-GV-PDEs} we obtain the following integral estimates.

	\begin{lemma}[Integral estimate]\label{lem:integral-estimate}
		There holds
		\[
		\int_\Sigma 
		\big( 4\alpha\tau \tilde f - 2 \tilde c u \big) \omega_0
		\leq 
		4\pi \alpha \tau 
		+ 
		\int_\Sigma F_\xi\omega_0
		\]
		and 
		\begin{align*}
		\int_\Sigma \big(  (2+4\alpha \tau) \tilde f - 2 \tilde c u  \big) \omega_0
		 \leq 
		4\pi \alpha\tau + 2\pi \log (\tau - 2\tilde N) - \int_\Sigma \log |\bm\phi|^2 \omega_0 
		+\int_\Sigma F_\xi \omega_0	+\int_\Sigma F_\eta \omega_0\;.
		\end{align*}
	\end{lemma}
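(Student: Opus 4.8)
The plan is to read off both inequalities from the two equations of \eqref{eqn:twisted-GV-PDEs} (resp. the original system \eqref{eqn:twisted-gravitating-vortex}), integrated against the reference volume form $\omega_0$ (recall $\text{Vol}_{\omega_0}=2\pi$ and $\int_\Sigma\Delta u\,\omega_0=0$), by applying Jensen's inequality on the probability measure $\tfrac1{2\pi}\omega_0$ and then inserting the pointwise bound $0\le\Phi\le\tau$ of Proposition \ref{prop:Phi-estimate} together with $\alpha\ge0$.

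For the first inequality I would integrate the second equation of \eqref{eqn:twisted-GV-PDEs} over $\Sigma$; the $\Delta u$ term drops out, leaving $\int_\Sigma e^{4\alpha\tau\tilde f-2\alpha\Phi-2\tilde c u-F_\xi}\,\omega_0=2\pi$. Jensen's inequality for $\exp$ then gives $\int_\Sigma\big(4\alpha\tau\tilde f-2\alpha\Phi-2\tilde c u-F_\xi\big)\omega_0\le0$, i.e. $\int_\Sigma(4\alpha\tau\tilde f-2\tilde c u)\omega_0\le\int_\Sigma(2\alpha\Phi+F_\xi)\omega_0$, and bounding $2\alpha\int_\Sigma\Phi\,\omega_0\le2\alpha\tau\,\text{Vol}_{\omega_0}=4\pi\alpha\tau$ via $\Phi\le\tau$ yields the first estimate.

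For the second inequality the idea is that one should not try to control $\int_\Sigma2\tilde f\,\omega_0$ in isolation — that would amount to comparing the $\omega$- and $\omega_0$-volumes of $\Phi$, which we do not have — but instead fold both equations into one exponential identity before applying Jensen. First, integrating the first line of \eqref{eqn:twisted-gravitating-vortex} over $\Sigma$ and using $\tfrac1{2\pi}\int_\Sigma iF_h=N$, $\tfrac1{2\pi}\int_\Sigma\eta=b_\eta$, $\text{Vol}_\omega=\text{Vol}_{\omega_0}=2\pi$ gives $\int_\Sigma\Phi\,\omega=2\pi(\tau-2\tilde N)$ (which also re-derives the numerical constraint $\tau>2\tilde N$, so $\log(\tau-2\tilde N)$ is defined). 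Next, substituting the second equation in the form $\omega=e^{4\alpha\tau\tilde f-2\alpha\Phi-2\tilde c u-F_\xi}\omega_0$ and the identity $\Phi=|\bm\phi|^2e^{-F_\eta}e^{2\tilde f}$ turns this into $\int_\Sigma|\bm\phi|^2e^{(2+4\alpha\tau)\tilde f-2\alpha\Phi-2\tilde c u-F_\eta-F_\xi}\,\omega_0=2\pi(\tau-2\tilde N)$. Applying Jensen for $\exp$ on $\tfrac1{2\pi}\omega_0$, taking logarithms and multiplying by $2\pi$ gives
\[
\int_\Sigma\big((2+4\alpha\tau)\tilde f-2\tilde c u\big)\omega_0\le2\pi\log(\tau-2\tilde N)+2\alpha\!\int_\Sigma\!\Phi\,\omega_0+\int_\Sigma F_\eta\,\omega_0+\int_\Sigma F_\xi\,\omega_0-\int_\Sigma\log|\bm\phi|^2\,\omega_0,
\]
and one last application of $2\alpha\int_\Sigma\Phi\,\omega_0\le4\pi\alpha\tau$ produces the second estimate.

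I do not expect a genuine obstacle here: the two points to keep in mind are that $\log|\bm\phi|^2$ must be $\omega_0$-integrable for Jensen to apply (true, since it has only logarithmic singularities at the finitely many zeros of $\bm\phi$) and that in the second step the correct combination of the two equations has to be assembled before invoking Jensen — that recombination is the only non-mechanical step.
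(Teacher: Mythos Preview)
Your proposal is correct and follows exactly the route indicated in the paper: apply Jensen's inequality to the second equation of \eqref{eqn:twisted-GV-PDEs} for the first estimate, and combine the first equation (in the integrated form $\int_\Sigma\Phi\,\omega=2\pi(\tau-2\tilde N)$) with the substitution $\omega=e^{4\alpha\tau\tilde f-2\alpha\Phi-2\tilde c u-F_\xi}\omega_0$ and $\Phi=|\bm\phi|^2e^{-F_\eta}e^{2\tilde f}$ before applying Jensen for the second. Your remarks on the integrability of $\log|\bm\phi|^2$ and on the need to assemble the two equations before invoking Jensen in the second step are on point.
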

	
	Define $\alpha_*:=\frac{-\tilde \chi}{\tau(\tau-2\tilde N)}>0$, then for any $\alpha\in [0, \alpha_*]$, the following inequality 
	\begin{equation}
	\tilde c+\alpha\tau^2 \leq 0
	\end{equation}
	holds. 
	
	Writing $y = e^{4\alpha\tau \tilde f - 2\tilde cu}$ for simplicity, combining the two equations of the system \eqref{eqn:twisted-GV-PDEs}, we obtain
	\begin{equation}
	\Delta \log y
	=
	2y\left[
	\tilde c+\alpha\tau^2 -\alpha\tau \Phi
	\right]
e^{-2\alpha \Phi} e^{-F_\xi}
	-
	2(\tilde c+2\alpha\tau\tilde N)\;.
	\end{equation}
	
	By the Green's representation formula, 
	
	\begin{equation}
	\begin{split}
	\log y (P)
	& =
	\frac{1}{2\pi}\int_\Sigma (4\alpha\tau \tilde f- 2\tilde cu) \omega_0
	+
	\int_\Sigma  G(P, \cdot) \Delta \log y\;	\omega_0	\\
	& =
	\frac{1}{2\pi}\int_\Sigma (4\alpha\tau \tilde f- 2\tilde cu) \omega_0
	+
	\int_\Sigma 2 G(P, \cdot) 		
	\left\{  
	y\left[
	\tilde c+\alpha\tau^2 -\alpha\tau \Phi
	\right]
	e^{-2\alpha \Phi} e^{-F_\xi}
	-
	(\tilde c+2\alpha\tau\tilde N)
	\right\} \omega_0\\
	& \leq
	\frac{1}{2\pi}\int_\Sigma (4\alpha\tau \tilde f- 2\tilde c u) \omega_0
	- 2 (\tilde c+2\alpha\tau \tilde N) \int_\Sigma  G(P, \cdot) \omega_0\\
	& \leq 
	2\alpha\tau
	- 2 \tilde \chi \int_\Sigma  G(P, \cdot) \omega_0
	 + \frac{1}{2\pi} \int_\Sigma F_\xi \omega_0 \\
	 & \leq 
	 C + \frac{1}{2\pi} \int_\Sigma F_\xi \omega_0\\
	 & := C_1
	\end{split}
	\end{equation}

	Going back to the system, 
	
	\begin{equation}\label{eq:reducedConf}
	\Delta \tilde f
	=
	- \tilde N + \frac{1}{2}(\tau - \Phi )
	y e^{-2\alpha \Phi}e^{-F_\xi}
	\end{equation}
	and 
	\begin{equation}\label{eq:Kahlerpotential}
	\Delta u 
	= - y e^{-2\alpha \Phi}e^{-F_\xi}+1\;.
	\end{equation}
	
	With the uniform upper bound $0\leq y\leq e^{C}e^{\frac{1}{2\pi} \int_\Sigma F_\xi \omega_0}$ proved above, for any $p>1$ standard theory of $L^p$ estimate in PDE theory and Sobolev embedding theorem show that 
	
	\begin{equation}\label{eq:oscillation}
	\begin{split}
	|\!| \tilde f - \frac{1}{2\pi} \int_\Sigma \tilde f\; \omega_0 |\!|_{C^\gamma}
	& \leq 
	C(1+|\!| e^{-F_\xi} |\!|_{L^p} e^{\frac{1}{2\pi} \int_\Sigma F_\xi \omega_0})
		:=C_2 \\
	|\!| u - \frac{1}{2\pi} \int_\Sigma u\; \omega_0|\!|_{C^\gamma} 
	& \leq 
	C(1+|\!| e^{-F_\xi} |\!|_{L^p}e^{\frac{1}{2\pi} \int_\Sigma F_\xi \omega_0})
	=C_2
	\end{split}
	\end{equation}
for some $\gamma\in (0,1)$. 

	The consequence is that the 
	\begin{equation}
	\osc_\Sigma \log y \leq 2(4\alpha\tau - 2\tilde c) C_2 .
	\end{equation}
	
	Integrating the equation 
\[
\frac{1}{2\pi} \int_\Sigma  y e^{-2\alpha\Phi} e^{-F_\xi}\omega_0 =1
\]
	We obtain 
	\begin{equation}
	\max_\Sigma \log y
	\geq 
	\log \frac{1}{\frac{1}{2\pi} \int_\Sigma  e^{-2\alpha\Phi} e^{-F_\xi} \omega_0 }
	\geq 
	- \log \frac{1}{2\pi}{\int_\Sigma e^{-F_\xi} \omega_0}\;.
	\end{equation}
	
	In conclusion, there holds the following $C^0$ estimate of $\log y=4\alpha\tau f - 2\tilde c u$:
	\begin{equation}
	C_1
	\geq \log y
	\geq 
	- \log \frac{1}{2\pi}\int_\Sigma e^{-F_\xi}\omega_0
		-
		2 [4\alpha\tau - 2\tilde c]	C_2
		:= - C_3\;.
	\end{equation}

	Next, we would like to obtain $C^0$ bound of $\tilde f$ and $u$ respectively.

	Using Green's representation formula for $\tilde f$, we obtain 
	\begin{equation}
	\begin{split}
	\tilde f
	&	=
	\frac{1}{2\pi}\int_\Sigma \tilde f \omega_0
	+ \int_\Sigma G(P, \cdot) \Delta \tilde f\; \omega_0\\
	& = 
	\frac{1}{2\pi}\int_\Sigma \tilde f \omega_0
	+ \int_\Sigma G(P, \cdot) 
	\left\{ 
	- \tilde N + \frac{1}{2}(\tau - \Phi) y e^{-2\alpha\Phi}e^{-F_\xi}
	\right\} \omega_0\\
	& \leq  
	\frac{1}{2\pi}\int_\Sigma \tilde f \omega_0
	+ \frac{1}{2}\tau e^{C_1} |\!|G(P,\cdot)|\!|_{L^{p_*}} |\!| e^{-F_\xi}  |\!|_{L^p}\\
	& \leq 
	\frac{1}{2}C_3 
	+ \alpha\tau 
	+ 
	\frac{1}{2}\log (\tau - 2\tilde N) 
	- \frac{1}{4\pi} \int \log |\bm\phi|^2 \omega_0 
	+ \frac{1}{4\pi} \int_\Sigma (F_\xi + F_\eta) \omega_0 
	+ \frac{1}{2}\tau e^{C_1} |\!|G(P,\cdot)|\!|_{L^{p_*}} |\!| e^{-F_\xi}  |\!|_{L^p}\\
	&:=
	C_6
	\end{split}	
	\end{equation}
	where in the last inequality we used the upper bound of $\int_\Sigma \tilde f\omega_0$ derived from combining the uniform lower bound of $\log y$ and the second integral estimate in Lemma \ref{lem:integral-estimate} (and $\frac{1}{p_*}+ \frac{1}{p}=1$).

	By integrating \eqref{eq:reducedConf}, we obtain that 
	\begin{equation}
	\int_\Sigma
	|\bm\phi|^2 e^{-F_\xi - F_\eta} y e^{-2\alpha\Phi} e^{2\tilde f} \omega_0
	=
	2\pi(\tau - 2\tilde N)
	\end{equation}
	From this, we get 
	\begin{equation}
	\begin{split}
	\max \tilde f
	& \geq 
	\frac{1}{2} \log  2\pi(\tau - 2\tilde N) 
	- \frac{1}{2} \log \int_\Sigma |\bm\phi|^2 e^{-F_\xi-F_\eta} y e^{-2\alpha\Phi}\omega_0\\
	& \geq 
	\frac{1}{2} \log  2\pi(\tau - 2\tilde N) 
	- \frac{1}{2} (C_1+ \frac{1}{2\pi}\int_\Sigma F_\xi \omega_0) - \frac{1}{2} \log \sup |\bm\phi|^2 - \frac{1}{2} \log \int_\Sigma e^{-F_\xi-F_\eta} \omega_0\\
	& := - C_7
		\end{split}
	\end{equation}
	
	Combining the upper and lower bounds of $\max\tilde f$ with the oscillation bound \eqref{eq:oscillation}, we finally obtain 
	\begin{equation}
	- 2 C_2-C_7\leq 	\tilde f \leq C_6
	\end{equation}
	and 
	\begin{equation}
	- \frac{C_3 + 4\alpha \tau C_6}{-2\tilde c}
	\leq 
	u 
	\leq 
	\frac{C_1+ 4\alpha\tau (2C_2+C_7)}{-2\tilde c}\;.
	\end{equation}
	
	These $C^0$ estimates together with the oscillation bound \eqref{eq:oscillation} implies the following estimates:
	
	\begin{proposition}[$C^\gamma$ estimates]\label{prop:C0-estimate}
		For any $p>1$, there exists $\gamma=\gamma(p)>0$ and $C_8$ depending only on the upper bounds of $\int_\Sigma F_\xi \omega_0$,  $\int_\Sigma F_\eta\omega_0$ , $\int_\Sigma e^{-F_\xi - F_\eta}\omega_0$ and $|\!| e^{-F_\xi}|\!|_{L^p} $ such that 
		for any twisted gravitating vortex (i.e. solution to Equation \eqref{eqn:twisted-gravitating-vortex}) with $\alpha\in [0, \alpha_*]$ there holds
		\[
		|\!| u|\!|_{C^\gamma(\Sigma)}, |\!|\tilde f|\!|_{C^\gamma(\Sigma)}\leq \frac{C_8}{-2\tilde c}.
		\]
	\end{proposition}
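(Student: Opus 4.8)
The plan is to assemble, in order, the chain of a priori estimates derived immediately above in \secref{sect:a priori} and to keep track of the dependence of the constants; the content of the Proposition is essentially a repackaging of those estimates. First I would invoke Proposition~\ref{prop:Phi-estimate}: the maximum principle applied to the equation for $\Phi=|\bm\phi|_h^2$, together with the sign hypothesis $\eta\le 0$, gives $0\le\Phi\le\tau$. This removes $\Phi$ as a source of unboundedness in every exponential nonlinearity that follows, so all remaining work is on the potentials $\tilde f$ and $u$.

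Next I would run the argument through the auxiliary quantity $y=e^{4\alpha\tau\tilde f-2\tilde c u}$. Combining the two equations of \eqref{eqn:twisted-GV-PDEs} produces a single scalar equation for $\Delta\log y$; the number $\alpha_*=\tfrac{-\tilde\chi}{\tau(\tau-2\tilde N)}$ is defined precisely so that $\tilde c+\alpha\tau^2\le 0$ for $\alpha\in[0,\alpha_*]$, and with $0\le\Phi\le\tau$ this makes the bracketed coefficient in $\Delta\log y$ nonpositive, so the Green representation formula yields an upper bound $\log y\le C_1$ with $C_1$ controlled by $\int_\Sigma F_\xi\,\omega_0$. Feeding the resulting bound $0\le y\le e^{C_1}$ into the individual equations \eqref{eq:reducedConf} and \eqref{eq:Kahlerpotential}, standard $L^p$-elliptic estimates and Sobolev embedding give the oscillation bounds $\|\tilde f-\tfrac1{2\pi}\int_\Sigma\tilde f\,\omega_0\|_{C^\gamma},\ \|u-\tfrac1{2\pi}\int_\Sigma u\,\omega_0\|_{C^\gamma}\le C_2$ of \eqref{eq:oscillation}, and hence $\osc_\Sigma\log y\le 2(4\alpha\tau-2\tilde c)C_2$.

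With the oscillation of $\log y$ in hand, integrating \eqref{eq:Kahlerpotential} and applying Jensen's inequality produces a lower bound on $\max_\Sigma\log y$, which upgrades to a two-sided bound $-C_3\le\log y\le C_1$. Combining the lower bound on $\log y$ with the second integral estimate of Lemma~\ref{lem:integral-estimate} bounds $\int_\Sigma\tilde f\,\omega_0$ from above; Green's formula for $\tilde f$ then gives $\tilde f\le C_6$, while integrating \eqref{eq:reducedConf} with Jensen gives $\max_\Sigma\tilde f\ge -C_7$, so adding the oscillation bound yields $-2C_2-C_7\le\tilde f\le C_6$. Finally, from $u=\tfrac{1}{-2\tilde c}(\log y-4\alpha\tau\tilde f)$ the $C^0$ bounds on $\log y$ and $\tilde f$ give a two-sided $C^0$ bound on $u$ carrying the factor $(-2\tilde c)^{-1}$, and combining all $C^0$ bounds with the $C^\gamma$ oscillation estimate \eqref{eq:oscillation} delivers $\|u\|_{C^\gamma(\Sigma)},\ \|\tilde f\|_{C^\gamma(\Sigma)}\le C_8/(-2\tilde c)$ with $C_8$ depending only on the listed quantities.

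The main obstacle, and the only genuinely subtle point, is the \emph{lower} bound on $\tilde f$: upper bounds all come cheaply from the sign conditions $\eta\le 0$, $\xi\ge 0$ together with Jensen, but the maximum principle is useless for the lower bound since the extrema of $\tilde f$ may occur at zeros of $\bm\phi$, so one must extract it indirectly by pushing the lower bound on $\max_\Sigma\log y$ back through the integral estimates of Lemma~\ref{lem:integral-estimate}. One should also note that the whole scheme collapses without the inequality $\tilde c+\alpha\tau^2\le 0$ — this is exactly why the range $\alpha\in[0,\alpha_*]$ is imposed — and, since $\tilde c=\tilde c(\alpha)$ varies along the path, care is needed to separate the universal constant $C_8$ from the $\alpha$-dependent factor $(-2\tilde c)^{-1}$.
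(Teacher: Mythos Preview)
Your proposal is correct and follows essentially the same route as the paper: the paper's proof of this proposition is precisely the chain of estimates assembled in \secref{sect:a priori} (Proposition~\ref{prop:Phi-estimate}, Lemma~\ref{lem:integral-estimate}, the Green-representation bound on $\log y$, the $L^p$/Sobolev oscillation bounds \eqref{eq:oscillation}, and the resulting two-sided bounds on $\tilde f$ and $u$), and you have reproduced each step with the right dependencies. Your closing remarks about the lower bound on $\tilde f$ and the role of $\alpha_*$ are accurate and match the paper's emphasis.
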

	
	Regarding the solvability of Equation \eqref{eqn:twisted-gravitating-vortex}, the following existence and uniqueness theorem holds:
	
	\begin{theorem}[Existence and Uniqueness of twisted gravitating vortex]
		\label{thm:existence-uniqueness-twisted}
		Let $\Sigma$ be any compact Riemann surface with $g(\Sigma)\geq 2$, $\omega_0$ is a constant curvature K\"ahler metric on $\Sigma$. Let $L$ be a holomorphic line bundle equipped with a nonzero holomorphic section $\bm\phi$, and $-\eta, \xi$ be real nonnegative closed $(1,1)$-form with $\int_\Sigma \eta = \frac{2\pi}{\text{Vol}_{\omega_0}} b_\eta$ and $\int_\Sigma \xi= \frac{2\pi}{\text{Vol}_{\omega_0}} b_\xi$. Suppose $\tau\cdot \frac{\text{Vol}_{\omega_0}}{2\pi} > 2\tilde N$, and denote $\alpha_*=\frac{-\tilde \chi}{\tau(\tau-2\tilde N)}$. Then, for any coupling constant $\alpha\in [0, \alpha_*]$ there exists a unique solution $(\omega, h)$ to the twisted gravitating vortex equations \eqref{eqn:twisted-gravitating-vortex} with $\omega \in [\omega_0]$. 
	\end{theorem}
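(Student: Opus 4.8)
The plan is to prove Theorem~\ref{thm:existence-uniqueness-twisted} via the continuity method in the coupling parameter $\alpha$, combining the openness result of Proposition~\ref{thm:openness} with the a priori estimates culminating in Proposition~\ref{prop:C0-estimate}, and to deduce uniqueness separately from the structure at the endpoints of the path. First I would set
\[
I = \{\alpha \in [0, \alpha_*] : \text{Equation } \eqref{eqn:twisted-gravitating-vortex} \text{ admits a solution } (\omega,h) \text{ with } \omega \in [\omega_0]\},
\]
and show $I = [0,\alpha_*]$ by showing $I$ is nonempty, open, and closed.

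For nonemptiness, note that at $\alpha = 0$ the system decouples: the second equation of $\eqref{eqn:twisted-gravitating-vortex}_0$ becomes the twisted K\"ahler--Einstein equation $\text{Ric}\,\omega = \tilde c\,\omega + \xi$ with $\tilde c = \tilde\chi < 0$ (here using $g(\Sigma) \geq 2$ and $\xi \geq 0$, so $\tilde\chi = \chi - b_\xi \leq \chi < 0$), which is solvable and has a unique solution $\omega_{0}$ by the Aubin--Yau theorem recalled in Section~\ref{sect:twisted-vortices}; then the first equation becomes the twisted vortex equation on $(\Sigma, L, \omega_0, \bm\phi)$ with twist form $\eta$, which by Theorem~\ref{thm:existence-uniqueness} is solvable (uniquely) precisely under the hypothesis $\tau \cdot \frac{\text{Vol}_{\omega_0}}{2\pi} > 2\tilde N$. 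Hence $0 \in I$. Openness of $I$ in $[0,\alpha_*]$ (relative topology) is exactly Proposition~\ref{thm:openness}: given a solution at $\hat\alpha \in I$ with $\hat\alpha \geq 0$, the implicit function theorem produces solutions for all nearby $\alpha$, and the elliptic regularity bootstrap promotes the $L^2_{k+4}$ solutions to smooth ones.

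Closedness is the main obstacle and is handled by the a priori estimates of Section~\ref{sect:a priori}. Take $\alpha_i \in I$ with $\alpha_i \to \bar\alpha \in [0,\alpha_*]$ and corresponding solutions $(\omega_i, h_i) = (\omega_0 + 2i\partial\bar\partial u_i,\, h_0 e^{2f_i})$. Proposition~\ref{prop:Phi-estimate} gives $0 \leq \Phi_i \leq \tau$ uniformly; since $\bar\alpha \leq \alpha_*$ and $\tilde c(\alpha) < 0$ stays bounded away from $0$ on $[0,\alpha_*]$, Proposition~\ref{prop:C0-estimate} gives uniform $C^\gamma$ bounds on $u_i$ and $\tilde f_i$, hence (via the PDE system \eqref{eqn:twisted-GV-PDEs}, $L^p$ estimates, Schauder estimates, and a bootstrap) uniform $C^k$ bounds for every $k$; moreover the $C^\gamma$ lower bound on $u_i$ together with $\Delta u_i = 1 - y_i e^{-2\alpha_i\Phi_i}e^{-F_\xi}$ and the uniform two-sided bounds on $y_i$ keeps $\omega_i$ uniformly equivalent to $\omega_0$, so the limit $\omega = \omega_0 + 2i\partial\bar\partial u$ is a genuine K\"ahler metric in $[\omega_0]$. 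Passing to a subsequential $C^\infty$ limit yields a smooth solution at $\bar\alpha$, so $\bar\alpha \in I$; thus $I$ is closed and $I = [0,\alpha_*]$.

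Finally, for uniqueness at a given $\alpha \in [0,\alpha_*]$, I would argue as follows. Suppose $(\omega, h)$ and $(\omega', h')$ are two solutions in $[\omega_0]$. Integrating the contracted second equation and using that $\tilde c < 0$ together with the convexity properties exploited in Lemma~\ref{lem:integral-estimate} and the Green's-function arguments of Section~\ref{sect:a priori}, one shows the K\"ahler potentials must agree; alternatively, and more cleanly, one deforms back along the continuity path: the kernel computation \eqref{eqn:kernal-inner-product} shows $D\wp$ is injective along the entire path (the hypotheses $-\eta, \xi \geq 0$, $g(\Sigma) \geq 2$ force $\nabla^{1,0}\dot v = 0$ hence $\dot v = $ const, and then $\dot f = 0$ either from the $J\eta_{\dot v}\lrcorner \mathrm{d}_A\bm\phi - \dot f\bm\phi = 0$ term when $\alpha > 0$ or from \eqref{eqn:integral-vanishing} when $\alpha = 0$), so the solution set is discrete; combined with the connectedness of $[0,\alpha_*]$, the uniqueness at $\alpha = 0$ (Aubin--Yau uniqueness of the twisted KE metric, e.g. \cite{Yao}, plus Theorem~\ref{thm:existence-uniqueness}) propagates to all $\alpha$. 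I expect the delicate point to be verifying that the continuity path of solutions is \emph{globally} well-defined, i.e.\ that no two distinct branches of solutions exist at some $\alpha$; this follows because injectivity of $D\wp$ at every solution means the implicit function theorem gives, near each $\alpha$, a \emph{unique} local branch, and the a priori estimates prevent branches from escaping to the boundary, so the solution is unique for each $\alpha$.
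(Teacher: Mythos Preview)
Your proposal is correct and follows essentially the same continuity-method approach as the paper: nonemptiness at $\alpha=0$ via decoupling into twisted K\"ahler--Einstein plus twisted vortices, openness via Proposition~\ref{thm:openness}, closedness via the $C^\gamma$ estimates of Proposition~\ref{prop:C0-estimate} together with Schauder bootstrap, and uniqueness by deforming any solution back along the path to $\alpha=0$ where the decoupled system has a unique solution. Your uniqueness discussion is in fact more detailed than the paper's (which states the idea in one sentence); the only superfluous part is the first, vaguer suggestion involving ``convexity properties'' of Lemma~\ref{lem:integral-estimate}, which you rightly abandon in favor of the path-deformation argument.
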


	\begin{proof}
		Now $\xi, \eta$ are fixed and only $\alpha$ varies in $[0,\alpha_*]$. 
			The standard Schauder estimate implies the $C^{2,\gamma}$ norm of $u$ and $\tilde f$ are bounded by $C_9$  independent of $\alpha\in [0,\alpha_*]$.  Then, the bootstrapping argument shows that all higher order norm of $u$ and $\tilde f$ are uniformly bounded (independent of $\alpha\in [0,\alpha_*]$). We thus get the closedness and then the existence of solution to Equation \eqref{eq:rcdGV}. 
		
		The uniqueness follows by connecting any twisted gravitating vortex with coupling constant $\alpha$ to the case when the coupling constant $\alpha=0$ via the continuity path studied, i.e. the system decouples as a twisted K\"ahler-Einstein metric and twisted vortices both proved to be unique. 
		\end{proof}

	\section{Singular gravitating vortices}
	We are ready to establish the existence of singular gravitating vortices introduced in section \ref{sect:twisted-vortices} using the previously proved a priori estimates. 
	
	\subsection{Existence}
	
	In this section, we specialize to a family of continuity paths $\eqref{eq:rcdGV}_{\varepsilon,\alpha}$ with $\eta_\varepsilon= - \sum_k \alpha_k \chi_\varepsilon^{'k}$ and $\chi_\varepsilon= \sum_j (1-\beta_j) \chi_\varepsilon^j$ for  $\varepsilon\in (0,1]$. For each $\varepsilon\in (0,1]$, the path could be solved for $\alpha\in [0,\alpha_*]$ by Theorem \ref{thm:existence-uniqueness-twisted}, whose solution is denoted by $(\omega_{\varepsilon,\alpha}, h_{\varepsilon,\alpha})$.  As $\varepsilon\to 0$, since the twisting forms $\eta_\varepsilon\rightarrow -2\pi \sum_k \alpha_k [r_k]$ and $\chi_\varepsilon\rightarrow 2\pi \sum_j (1-\beta_j) [q_j]$, we expect the twisted gravitating vortex $(\omega_{\varepsilon,\alpha}, h_{\varepsilon, \alpha})$ converges to ``solution" of Equation $\eqref{eq:cpGV}_\alpha$. 
	
	Since $F_{\eta_\varepsilon} = -\sum_k \alpha_k \log (|\bm t_k|^2 + \varepsilon), \; F_{\xi_\varepsilon}
	= \sum_j (1-\beta_j) \log (|\bm s_j|^2 + \varepsilon)$, it is clearly there exists $p>1$ depending only on $\beta_j$'s ($p$ can be taken as $\frac{1}{2} (1+ \min\{\frac{1}{1-\beta_1}, \cdots,  \frac{1}{1-\beta_M}\} )$ and $C>0$ independent of $\varepsilon\in (0,1]$  such that
	
	\begin{equation}
	   \begin{split}
	\int_\Sigma F_{\eta_\varepsilon} \omega_0
	& 
	\leq 
	- \sum_k\alpha_k  \int_\Sigma \log |\bm t_k|^2 \omega_0
	\leq C\\
	\int_\Sigma F_{\xi_\varepsilon} \omega_0
	& 
	\leq 
	\sum_j (1-\beta_j) \int_\Sigma \log (|\bm s_j|^2 + 1) \omega_0
	 \leq C\\
	\int_\Sigma e^{-F_{\xi_\varepsilon}  - F_{\eta_{\varepsilon}}} \omega_0
		& =
		\int_\Sigma \prod_j (|\bm s_j|^2 +\varepsilon)^{\beta_j-1} \prod_k (|\bm t_k|^2 + \varepsilon)^{\alpha_k} \omega_0\\
		& \leq 
		\int_\Sigma  
		 \prod_j |\bm s_j|^{2\beta_j-2} \prod_k (|\bm t_k|^2+1)^{\alpha_k}\omega_0
		 \leq C \\
	\int_\Sigma e^{-pF_{\xi_\varepsilon}}\omega_0
	& = 
	\int_\Sigma \prod_j |\bm s_j|^{2p(\beta_j-1)}\omega_0
	\leq C	 
	\end{split}
	\end{equation}
	
	With these integral upper bounds, Proposition \ref{prop:C0-estimate} implies 
	\begin{equation}
	|\!| u_{\varepsilon,\alpha} |\!|_{C^\gamma(\Sigma)} 
	+
	|\!| \tilde f_{\varepsilon,\alpha} |\!|_{C^\gamma(\Sigma)}
	\leq 
	C
	\end{equation}
	for some $\gamma>0$ and $C$ independent of $\varepsilon\in (0,1]$ and $\alpha\in [0,\alpha_*]$.

	\begin{proposition}
		For any $K\subset\subset\Sigma\backslash\{q_1, \cdots, q_M\}$, there exists $C_K>0$ independent of $\varepsilon\in (0,1]$ and $\alpha\in [0,\alpha_*]$ such that 
		\[
		|\!| u_{\varepsilon,\alpha}|\!|_{C^{2,\gamma}(K)} , |\!| \tilde f_{\varepsilon,\alpha}|\!|_{C^{2,\gamma}(K)}
		\leq 
		C_K. 
		\]
	\end{proposition}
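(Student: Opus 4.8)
The plan is to run a standard interior elliptic bootstrap for the system $\eqref{eq:smooth-system}_{\varepsilon,\alpha}^{sm}$, exploiting that, once we stay away from the excluded points $q_1,\dots,q_M$, the only singular coefficients — the weights $e^{-F_{\xi_\varepsilon}}=\prod_j(|\bm s_j|^2+\varepsilon)^{\beta_j-1}$ — become completely tame. First I would fix auxiliary open sets $K\subset\subset K'\subset\subset\Sigma\setminus\{q_1,\dots,q_M\}$ with $\overline{K'}$ compact. On $\overline{K'}$ one has $|\bm s_j|^2\geq\delta>0$ for every $j$, so $e^{-F_{\xi_\varepsilon}}$ is bounded above and below by positive constants and bounded in every $C^m(\overline{K'})$, uniformly in $\varepsilon\in(0,1]$.

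Next I would record H\"older control of the remaining data on $\overline{K'}$. The function $e^{-F_{\eta_\varepsilon}}=\prod_k(|\bm t_k|^2+\varepsilon)^{\alpha_k}$ is smooth with uniform bounds away from the $r_k$; near an $r_k$ lying in $\overline{K'}$ it is still bounded in $C^\gamma(\overline{K'})$ uniformly in $\varepsilon$, using the elementary inequality $|a^{\alpha_k}-b^{\alpha_k}|\leq C\,|a-b|^{\min(\alpha_k,1)}$ for $a,b\geq 0$ together with the second-order vanishing of $|\bm t_k|^2$ at $r_k$ (shrinking $\gamma$ if needed). Since $\|u_{\varepsilon,\alpha}\|_{C^\gamma(\Sigma)}+\|\tilde f_{\varepsilon,\alpha}\|_{C^\gamma(\Sigma)}\leq C$ by Proposition \ref{prop:C0-estimate} and the integral bounds recorded above, it follows that $\Phi_{\varepsilon,\alpha}=|\bm\phi|^2 e^{-F_{\eta_\varepsilon}}e^{2\tilde f_{\varepsilon,\alpha}}$ (which in addition lies in $[0,\tau]$ by Proposition \ref{prop:Phi-estimate}) and $y_{\varepsilon,\alpha}:=e^{4\alpha\tau\tilde f_{\varepsilon,\alpha}-2\tilde c u_{\varepsilon,\alpha}}$ are bounded in $C^\gamma(\overline{K'})$, uniformly in $\varepsilon\in(0,1]$ and $\alpha\in[0,\alpha_*]$ (for $y_{\varepsilon,\alpha}$, note that $4\alpha\tau\tilde f_{\varepsilon,\alpha}-2\tilde c(\alpha)u_{\varepsilon,\alpha}$ stays in a fixed bounded range as $\alpha$ ranges over the compact interval $[0,\alpha_*]$).

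The mechanism is then as follows: solving for $\Delta u_{\varepsilon,\alpha}$ in the second equation of $\eqref{eq:smooth-system}_{\varepsilon,\alpha}^{sm}$ and substituting into the first, the system becomes
\[
\Delta u_{\varepsilon,\alpha}=1-y_{\varepsilon,\alpha}\,e^{-2\alpha\Phi_{\varepsilon,\alpha}}\,e^{-F_{\xi_\varepsilon}},\qquad
\Delta \tilde f_{\varepsilon,\alpha}=-\tilde N+\tfrac12(\tau-\Phi_{\varepsilon,\alpha})\,y_{\varepsilon,\alpha}\,e^{-2\alpha\Phi_{\varepsilon,\alpha}}\,e^{-F_{\xi_\varepsilon}},
\]
where $\Delta=\Delta_{\omega_0}$ is the smooth, uniformly elliptic Laplacian of the fixed metric $\omega_0$. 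By the previous paragraph both right-hand sides are bounded in $C^\gamma(\overline{K'})$ uniformly in $\varepsilon\in(0,1]$ and $\alpha\in[0,\alpha_*]$. Interior Schauder estimates on the pair $K\subset\subset K'$, combined with the uniform $C^0(\Sigma)$ bounds on $u_{\varepsilon,\alpha}$ and $\tilde f_{\varepsilon,\alpha}$, then yield $\|u_{\varepsilon,\alpha}\|_{C^{2,\gamma}(K)}+\|\tilde f_{\varepsilon,\alpha}\|_{C^{2,\gamma}(K)}\leq C_K$ with $C_K$ independent of $\varepsilon$ and $\alpha$.

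The only step needing genuine care is the uniform H\"older bound for $e^{-F_{\eta_\varepsilon}}$ — and hence for $\Phi_{\varepsilon,\alpha}$ — at the parabolic points $r_k$, which are allowed to sit inside $K$ and at which the gradient of $e^{-F_{\eta_\varepsilon}}$ need not be uniformly bounded in $\varepsilon$ when $\alpha_k<\tfrac12$; this is precisely what the elementary power inequality (at the cost of possibly shrinking $\gamma$) is for. Everything else is routine bootstrap, the essential use of the hypothesis $K\subset\subset\Sigma\setminus\{q_1,\dots,q_M\}$ being that it keeps the argument away from the non-integrable weight $e^{-F_{\xi_\varepsilon}}$.
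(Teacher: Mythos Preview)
Your argument is correct and follows essentially the same route as the paper's proof: establish a uniform $C^\gamma$ bound on $\Phi_{\varepsilon,\alpha}$ (shrinking $\gamma$ near the $r_k$), note that the weight $e^{-F_{\xi_\varepsilon}}$ is uniformly smooth on $\overline{K'}$, and then apply interior Schauder estimates to the decoupled equations for $\Delta u_{\varepsilon,\alpha}$ and $\Delta\tilde f_{\varepsilon,\alpha}$. Your write-up is in fact a bit more careful than the paper's, making explicit the nested sets $K\subset\subset K'$ and the elementary power inequality behind the uniform H\"older bound for $\prod_k(|\bm t_k|^2+\varepsilon)^{\alpha_k}$.
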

	
	\begin{proof}
		
		We notice that $\Phi_{\varepsilon,\alpha}=|\bm\phi|^2 \prod_k (|\bm t_k|^2+\varepsilon)^{\alpha_k} e^{2\tilde f_{\varepsilon,\alpha}}$ has a uniform $C^\gamma$ bound on $\Sigma$ (possibly by redefining $\gamma$ to be $\min \{\gamma, 2\alpha_1, \cdots, 2\alpha_S \} $). For any $K\subset\subset\Sigma\backslash\{q_1, \cdots, q_M\}$, the term  $\frac{e^{4\alpha\tau \tilde f_{\varepsilon,\alpha} - 2\alpha \Phi_{\varepsilon,\alpha} - 2\tilde c u_{\varepsilon,\alpha} }}{\prod_j (|\bm s_j|^2+\varepsilon)^{1-\beta_j} }$ has a uniform $C^\gamma$ bound depending only on $K$ (independent of $\varepsilon$ and $\alpha\in [0,\alpha_*]$), and then the proposition follows from the standard Schauder's estimates. Precisely speaking, the second equation of the system \eqref{eq:smooth-system} implies $u_{\varepsilon,\alpha}$ has uniform $C^{2,\gamma}$ bound on $K$. Back to the first equation, we conclude $\tilde f_{\varepsilon,\alpha}$ has uniform $C^{2, \gamma}$ bound on $K$. 	
	\end{proof}
	
	For higher order estimates, we have 
	
	\begin{proposition}
		For any $K\subset\subset\Sigma\backslash\{q_1, \cdots, q_M; r_1, \cdots, r_S\}$ and integer $s\geq 3$, there exists $C_{K,s}>0$ independent of $\varepsilon\in (0,1]$ and $\alpha\in [0,\alpha_*]$ such that 
		\[
		|\!| u|\!|_{C^{s,\gamma}(K)} , |\!| \tilde f|\!|_{C^{s,\gamma}(K)}
		\leq 
		C_{K, s}
		\]
	\end{proposition}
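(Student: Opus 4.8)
The plan is to run a straightforward interior Schauder bootstrap on the smooth system $\eqref{eq:smooth-system}_{\varepsilon,\alpha}^{sm}$, taking as input the uniform $C^{2,\gamma}$ bounds on $u_{\varepsilon,\alpha}$ and $\tilde f_{\varepsilon,\alpha}$ away from the $q_j$ established in the preceding proposition. The key observation is that on a compact set disjoint from \emph{all} of $q_1,\dots,q_M,r_1,\dots,r_S$ the two smoothed weight factors $\prod_k(|\bm t_k|^2+\varepsilon)^{\alpha_k}$ and $\prod_j(|\bm s_j|^2+\varepsilon)^{1-\beta_j}$ are smooth functions whose $C^{s,\gamma}$ norms and positive lower bounds are uniform in $\varepsilon\in(0,1]$, so the only genuine source of singular behaviour has been removed. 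Combined with the fact that the Laplacian $\Delta=\Delta_{\omega_0}$ appearing in $\eqref{eq:smooth-system}_{\varepsilon,\alpha}^{sm}$ is the \emph{fixed} smooth elliptic operator of the background metric --- hence has interior Schauder constants depending only on $\omega_0$ and the chosen domains, not on $\varepsilon$ or $\alpha$ --- this reduces the estimate to the standard bootstrap one would run for a smooth gravitating vortex.

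Concretely, I would fix a nested chain of compact sets $K\subset K_s\subset K_{s-1}\subset\cdots\subset K_2\subset\subset\Sigma\setminus\{q_j;r_k\}$ and argue by induction on $r\ge 2$ that $u_{\varepsilon,\alpha},\tilde f_{\varepsilon,\alpha},\Phi_{\varepsilon,\alpha}$ are bounded in $C^{r,\gamma}(K_r)$ uniformly in $\varepsilon,\alpha$. The base case $r=2$ follows from the preceding proposition applied on $K_2$ (compactly contained in $\Sigma\setminus\{q_j\}$), together with the uniform $C^{2,\gamma}$ bound on $\Phi_{\varepsilon,\alpha}=|\bm\phi|^2\prod_k(|\bm t_k|^2+\varepsilon)^{\alpha_k}e^{2\tilde f_{\varepsilon,\alpha}}$, legitimate since $|\bm t_k|^2\ge\delta>0$ on $K_2$. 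For the inductive step I would rewrite the second equation of $\eqref{eq:smooth-system}_{\varepsilon,\alpha}^{sm}$ as
\[
\Delta u_{\varepsilon,\alpha}=1-\frac{e^{4\alpha\tau\tilde f_{\varepsilon,\alpha}-2\alpha\Phi_{\varepsilon,\alpha}-2\tilde c u_{\varepsilon,\alpha}}}{\prod_j(|\bm s_j|^2+\varepsilon)^{1-\beta_j}},
\]
observe that its right-hand side is uniformly bounded in $C^{r,\gamma}(K_r)$ (an $\exp$ of a uniformly $C^{r,\gamma}$-bounded exponent over a uniformly $C^{r,\gamma}$-bounded and uniformly positive denominator), and conclude by interior Schauder that $\|u_{\varepsilon,\alpha}\|_{C^{r+2,\gamma}(K_{r+1})}$ is uniformly bounded. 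Feeding this into the first equation, written as
\[
\Delta\tilde f_{\varepsilon,\alpha}=-\tilde N-\tfrac12(\Phi_{\varepsilon,\alpha}-\tau)(1-\Delta u_{\varepsilon,\alpha}),
\]
whose right-hand side is then uniformly bounded in $C^{r,\gamma}(K_{r+1})$, gives a uniform bound on $\|\tilde f_{\varepsilon,\alpha}\|_{C^{r+2,\gamma}(K_{r+1})}$; and $\Phi_{\varepsilon,\alpha}$ inherits the same bound from its formula. After $s-2$ such steps one lands on $C^{s,\gamma}(K)$.

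The one point that demands genuine attention --- and the reason the bound is $\varepsilon$-independent rather than automatic --- is the uniform control of the weight factors near, but not at, the singular points: on each $K_r$ the smooth functions $|\bm s_j|^2,|\bm t_k|^2$ take values in a fixed compact subinterval $[\delta,\Lambda]\subset(0,\infty)$, so for $\varepsilon\in(0,1]$ the arguments $|\bm s_j|^2+\varepsilon$ and $|\bm t_k|^2+\varepsilon$ stay in $[\delta,\Lambda+1]$, on which the power functions $x\mapsto x^{1-\beta_j}$ and $x\mapsto x^{\alpha_k}$ have all derivatives bounded; the chain and product rules then yield $C^{r,\gamma}$ and two-sided bounds uniform in $\varepsilon$ (and $\alpha$, together with $\tilde c=\tilde c(\alpha)$, ranges over the compact set $[0,\alpha_*]$ and contributes nothing). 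I do not anticipate any serious obstacle beyond verifying this uniformity and keeping track of the finitely many, hence harmless, small losses of domain at each iteration.
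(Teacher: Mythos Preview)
Your proposal is correct and follows essentially the same approach as the paper: the paper's proof is a two-line sketch observing that for any integer $l\geq 1$ the factor $|\bm\phi|^2\prod_k(|\bm t_k|^2+\varepsilon)^{\alpha_k}$ has uniform $C^{l,\gamma}$ norm on $K$, and then invokes ``a standard bootstrapping argument of Schauder's estimate'' together with the preceding $C^{2,\gamma}$ proposition. Your nested-domain induction on $r$ and explicit verification of the $\varepsilon$-uniform control of the weight factors is precisely a spelled-out version of that same bootstrap.
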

	
	\begin{proof}
		We only need to notice that the for any integer $l\geq 1$, the $C^{l,\gamma}$ norm of $|\bm\phi|^2 \prod_k (|\bm t_k|^2+\varepsilon)^{\alpha_k}$ on $K$ is uniformly bounded. This fact together with the above proposition yield the proposition by a standard bootstrapping argument of Schauder's estimate. 
	\end{proof}

	Fixing any $\alpha\in (0, \alpha_*]$, we take a sequence $K_m\subset\subset\Sigma\backslash\{q_1,\cdots, q_M; r_1,\cdots, r_S\}$ with $K_m\subset K_{m+1}$ and $\cup_{m=1}^{+\infty} K_m= \Sigma\backslash\{q_1,\cdots, q_M; r_1,\cdots, r_S\}$. Applying Arzela-Ascoli's theorem successively on $K_m$, we get a subsequence $\varepsilon_m\to 0$ such that $(\omega_{\varepsilon_m,\alpha}, h_{\varepsilon_m,\alpha})$ converges to a limit $(\omega_{0,\alpha}, h_{0,\alpha})$ solving Equation \eqref{eq:cpGV} on   $\Sigma\backslash\{q_1,\cdots, q_M; r_1,\cdots, r_S\}$, with the potential functions $\tilde f_{0,\alpha}, u_{0,\alpha}\in C^\gamma(\Sigma)\cap C^\infty(\Sigma\backslash\{q_1,\cdots, q_M; r_1,\cdots, r_S\})$. The pair satisfies 
	
	\begin{equation}
	\begin{split}
	\Delta \tilde f_{0,\alpha}
	+
	\frac{1}{2} ( |\bm\phi|_{h_{0,\alpha}}^2 -\tau ) (1-\Delta u_{0,\alpha})
	& = -\tilde N\\
	\Delta u_{0,\alpha} + \frac{e^{4\alpha\tau \tilde f_{0,\alpha} - 2\alpha |\bm\phi|_{h_{0,\alpha}}^2 - 2\tilde c u_{0,\alpha}}}{\prod_j |\bm s_j|^{2(1-\beta_j)}} 
	& = 1
	\end{split}
	\end{equation}
	with $|\bm\phi|_{h_{0,\alpha}}^2 = |\bm\phi|^2 \prod_k |\bm t_k|^{2\alpha_k} e^{2\tilde f_{0,\alpha}}$ on $\Sigma\backslash\{q_1,\cdots, q_M; r_1,\cdots, r_S\} $. Therefore, $h_{0,\alpha} = h_0 \prod_k |\bm t_k|^{2\alpha_k} e^{2\tilde f_{0,\alpha}}$ is a H\"older continuous Hermitian metric on $L$ with parabolic singularity of order $2\alpha_k$ at $r_k$ for $k=1,2,\cdots,S$; and $\omega_{0,\alpha}=(1-\Delta u_{0,\alpha})\omega_0 = \prod_j |\bm s_j|^{2(\beta_j-1)} \digamma \omega_0$ is a H\"older continuous K\"ahler metric with conical singularity of angle $2\pi \beta_j$ at $q_j$ for $j=1,2,\cdots, M$. Moreover, $\omega_{0, \alpha}$ and $h_{0,\alpha}$ are smooth on $\Sigma\backslash\{q_1, \cdots, q_M; r_1, \cdots, r_S\}$.

    In conclusion, regarding the solvability of \eqref{eq:cpGV}, we finally reach the following existence theorem. 
	\begin{theorem}[Existence of singular gravitating vortices]
		\label{thm:existence-singular-gravitating-vortices}
		Let $\Sigma$ be any compact Riemann surface with $g(\Sigma)\geq 2$, $\omega_0$ is a constant curvature K\"ahler metric on $\Sigma$. Let $L$ be a holomorphic line bundle equipped with a nonzero holomorphic section $\bm\phi$. 
		Let $\beta_j\in (0,1)$ for $j=1,\cdots,M$ and $\alpha_k>0$ for $k=1,\cdots, S$ such that $\tilde \chi=\chi(\Sigma) - \sum_j (1-\beta_j)<0$, and $\tau \cdot \frac{\text{Vol}_{\omega_0}}{2\pi} >2\tilde N=2(N+\sum_k \alpha_k)$ and denote $\alpha_*=\frac{-\tilde \chi}{\tau(\tau-2\tilde N)}$. For any coupling constant $\alpha\in [0, \alpha_*]$, there exists a solution $(\omega, h)$ to Equation \eqref{eq:cpGV} with $\omega\in [\omega_0]$. Moreover, $\omega$ has conical singularity with angle $2\pi \beta_j$ at $q_j$ for $j=1,2,\cdots, M$ and $h$ is a Hermitian metric having parabolic singularities of order $2\alpha_k$ at $r_k$ for $k=1,2,\cdots, S$.
	\end{theorem}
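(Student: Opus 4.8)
The plan is to fix the coupling constant $\alpha\in[0,\alpha_*]$ and let the smoothing parameter $\varepsilon\to0$ along the family of twisted gravitating vortex equations $\eqref{eq:rcdGV}_{\varepsilon,\alpha}$, i.e.\ Equation $\eqref{eqn:twisted-gravitating-vortex}_\alpha$ with the specific twisting forms $\eta_\varepsilon=-\sum_k\alpha_k\chi_k^{'\varepsilon}$ and $\xi_\varepsilon=\sum_j(1-\beta_j)\chi_j^\varepsilon$, and then to verify that the limit is a singular gravitating vortex in the sense of Definition \ref{def:singular-gravitating-vortices}. First I would check that for each $\varepsilon\in(0,1]$ Theorem \ref{thm:existence-uniqueness-twisted} applies: $-\eta_\varepsilon$ and $\xi_\varepsilon$ are nonnegative closed $(1,1)$-forms because $\chi_j^\varepsilon,\chi_k^{'\varepsilon}$ are positive and $\alpha_k\geq0$, $1-\beta_j\in(0,1)$; the cohomology classes give $b_{\eta_\varepsilon}=-\sum_k\alpha_k$ and $b_{\xi_\varepsilon}=\sum_j(1-\beta_j)$, so $\tilde N=N+\sum_k\alpha_k$ and $\tilde\chi=\chi(\Sigma)-\sum_j(1-\beta_j)<0$, and hence the numerical condition $\tau\cdot\frac{\text{Vol}_{\omega_0}}{2\pi}>2\tilde N$ and the threshold $\alpha_*=\frac{-\tilde\chi}{\tau(\tau-2\tilde N)}$ are all independent of $\varepsilon$; and $g(\Sigma)\geq2$ by hypothesis. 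This produces, for every $\varepsilon\in(0,1]$, a unique smooth twisted gravitating vortex $(\omega_{\varepsilon,\alpha},h_{\varepsilon,\alpha})=(\omega_0+2i\partial\bar\partial u_{\varepsilon,\alpha},\,h_0\prod_k(|\bm t_k|^2+\varepsilon)^{\alpha_k}e^{2\tilde f_{\varepsilon,\alpha}})$ solving $\eqref{eq:smooth-system}_{\varepsilon,\alpha}^{sm}$; the case $\alpha=0$ is the decoupled pair of a twisted K\"ahler--Einstein metric and twisted vortices.

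Second, I would assemble the $\varepsilon$-uniform a priori estimates. The crucial observation is that every quantity on which the constant $C_8$ of Proposition \ref{prop:C0-estimate} depends --- upper bounds for $\int_\Sigma F_{\xi_\varepsilon}\omega_0$, $\int_\Sigma F_{\eta_\varepsilon}\omega_0$, $\int_\Sigma e^{-F_{\xi_\varepsilon}-F_{\eta_\varepsilon}}\omega_0$, and $|\!|e^{-F_{\xi_\varepsilon}}|\!|_{L^p}$ --- is bounded uniformly in $\varepsilon\in(0,1]$. With $F_{\eta_\varepsilon}=-\sum_k\alpha_k\log(|\bm t_k|^2+\varepsilon)$ and $F_{\xi_\varepsilon}=\sum_j(1-\beta_j)\log(|\bm s_j|^2+\varepsilon)$, the first three follow from $\varepsilon\leq1$ together with the integrability of $\log|\bm t_k|^2$ and of $|\bm s_j|^{2(\beta_j-1)}$ --- the latter integrable precisely because $\beta_j>0$ --- while the $L^p$ bound forces $p$ strictly below the critical exponent $\min_j\frac1{1-\beta_j}$ determined by the cone angles, e.g.\ $p=\frac12\bigl(1+\min_j\frac1{1-\beta_j}\bigr)$, so that $|\bm s_j|^{2p(\beta_j-1)}$ is integrable. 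Proposition \ref{prop:C0-estimate} then yields $|\!|u_{\varepsilon,\alpha}|\!|_{C^\gamma(\Sigma)}+|\!|\tilde f_{\varepsilon,\alpha}|\!|_{C^\gamma(\Sigma)}\leq C$ uniformly in $\varepsilon\in(0,1]$ and $\alpha\in[0,\alpha_*]$ (here $-2\tilde c\geq-2\tilde\chi>0$ stays bounded away from $0$), Proposition \ref{prop:Phi-estimate} gives $0\leq\Phi_{\varepsilon,\alpha}\leq\tau$, and the two interior estimates established just above --- exploiting the uniform $C^{l,\gamma}(K)$ control of $|\bm\phi|^2\prod_k(|\bm t_k|^2+\varepsilon)^{\alpha_k}$ --- upgrade this to $\varepsilon$-uniform $C^{2,\gamma}$ bounds on compact subsets of $\Sigma\setminus\{q_1,\dots,q_M\}$ and to $\varepsilon$-uniform $C^{s,\gamma}$ bounds of every order on compact subsets of $\Sigma\setminus\{q_1,\dots,q_M;r_1,\dots,r_S\}$.

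Third, with $\alpha$ fixed I would pick an exhaustion $K_m\subset\subset\Sigma\setminus\{q_1,\dots,q_M;r_1,\dots,r_S\}$ with $K_m\subset K_{m+1}$ and union all of $\Sigma\setminus\{q_1,\dots,q_M;r_1,\dots,r_S\}$, and by a diagonal Arzela--Ascoli argument extract a subsequence $\varepsilon_m\to0$ along which $(u_{\varepsilon_m,\alpha},\tilde f_{\varepsilon_m,\alpha})\to(u_{0,\alpha},\tilde f_{0,\alpha})$ in $C^\gamma(\Sigma)$ and in $C^\infty$ on every $K_m$. Letting $\varepsilon_m\to0$ in the system $\eqref{eq:smooth-system}_{\varepsilon,\alpha}^{sm}$ shows $(u_{0,\alpha},\tilde f_{0,\alpha})$ solves \eqref{eq:cpGV-PDE} on $\Sigma\setminus\{q_1,\dots,q_M;r_1,\dots,r_S\}$ with $u_{0,\alpha},\tilde f_{0,\alpha}\in C^\gamma(\Sigma)\cap C^\infty(\Sigma\setminus\{q_1,\dots,q_M;r_1,\dots,r_S\})$. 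It then remains to read off the singular structure: $h_{0,\alpha}=h_0\prod_k|\bm t_k|^{2\alpha_k}e^{2\tilde f_{0,\alpha}}$ with $\tilde f_{0,\alpha}$ H\"older, so $h_{0,\alpha}$ has parabolic singularity of order $2\alpha_k$ at $r_k$; and from the second equation of \eqref{eq:cpGV-PDE}, $1-\Delta u_{0,\alpha}=y\,e^{-2\alpha\Phi_{0,\alpha}}\prod_j|\bm s_j|^{2(\beta_j-1)}$ with $y=e^{4\alpha\tau\tilde f_{0,\alpha}-2\tilde c u_{0,\alpha}}>0$ H\"older, so $\omega_{0,\alpha}=(1-\Delta u_{0,\alpha})\omega_0=\digamma\prod_j|\bm s_j|^{2(\beta_j-1)}\omega_0$ with $\digamma$ positive and H\"older, i.e.\ $\omega_{0,\alpha}$ has conical singularity of angle $2\pi\beta_j$ at $q_j$; both metrics are smooth off $\{q_1,\dots,q_M;r_1,\dots,r_S\}$ and $\omega_{0,\alpha}\in[\omega_0]$ by construction, so $(\omega_{0,\alpha},h_{0,\alpha})$ is the desired singular gravitating vortex.

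I expect the principal obstacle to be the $\varepsilon$-uniformity of the $C^0$ (hence $C^\gamma$) estimates for the potentials: the per-$\varepsilon$ existence is supplied by Theorem \ref{thm:existence-uniqueness-twisted}, and the interior bootstrap and the Arzela--Ascoli passage to the limit are routine once the \emph{global} $C^\gamma(\Sigma)$ bound is in hand. The delicate input is the Green's-representation/Jensen chain of Proposition \ref{prop:C0-estimate} combined with the $\varepsilon$-independence of the above integrals, and in particular the choice of $p$ strictly below $\min_j\frac1{1-\beta_j}$ --- this is exactly where $\beta_j\in(0,1)$ is used --- while the hypothesis $\tilde\chi<0$ is needed both to make $\alpha_*>0$ and to keep $\tilde c\leq\tilde c+\alpha\tau^2\leq0$ on all of $[0,\alpha_*]$. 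A secondary point not to be glossed over is that the limiting pair is \emph{a priori} constructed only on the punctured surface, so one must invoke the global $C^\gamma(\Sigma)$ estimate --- not merely the interior ones --- to extend $u_{0,\alpha},\tilde f_{0,\alpha}$ H\"older-continuously across the punctures and thereby pin down the local models $|\bm s_j|^{2(\beta_j-1)}\omega_0$ and $|\bm t_k|^{2\alpha_k}h_0$.
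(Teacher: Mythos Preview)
Your proposal is correct and follows essentially the same approach as the paper: apply Theorem \ref{thm:existence-uniqueness-twisted} for each $\varepsilon\in(0,1]$, verify that the four integral quantities feeding into Proposition \ref{prop:C0-estimate} are $\varepsilon$-uniformly bounded (with the same choice $p=\tfrac12(1+\min_j\tfrac{1}{1-\beta_j})$), bootstrap via the two interior propositions to get $C^{s,\gamma}$ bounds on compacta away from the singular set, run a diagonal Arzel\`a--Ascoli on an exhaustion, and finally read off the conical/parabolic local models from the limit PDE and the global $C^\gamma$ control of $(u_{0,\alpha},\tilde f_{0,\alpha})$. Your identification of the decisive step --- the $\varepsilon$-independence of the $C^\gamma$ estimate via the integral bounds --- and of the role of $\beta_j\in(0,1)$ and $\tilde\chi<0$ matches the paper precisely.
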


	\subsection{Regularity}

	
	Continuing the notations introduced in the introduction, the following proposition shows how to build smooth gravitating vortices from singular gravitating vortices when the orders $\bm \alpha=(\alpha_1,\cdots, \alpha_S)$ and $\bm \beta=(\beta_1,\beta_2,\cdots, \beta_M)$ take some special values.
	
	\begin{proposition} 
		\label{prop:branched-cover}
		Let $\pi: \Sigma'\to \Sigma$ be a branched cover with degree $n_j$ around $q_j$, let $(\omega, h)$ be a singular gravitating vortices on $(\Sigma, L)$ with conical singularity with angle $2\pi\frac{1}{n_j}$ at $q_j \; (j=1,2,\cdots, M)$ for $\omega$ and  parabolic singularity of order $2m_k$ at $r_k\; (k=1,2,\cdots, S)$ for $h$, with the Higgs field $\bm\phi$. Then $\big( \pi^*\omega, \pi^*(h\otimes \prod_k h_k^{m_k}) \big)$ is a smooth gravitating vortex solution on $\big( \Sigma', \pi^*(L\otimes \prod_k L_{[r_k]}^{\otimes m_k}) \big)$, with the Higgs field $\pi^*\left(  \bm\phi\otimes \prod_k \bm t_k^{m_k} \right)$. 
	\end{proposition}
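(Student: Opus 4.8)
The plan is to proceed in two stages: first absorb the parabolic singularities of $h$ into a twist of the line bundle, reducing to a singular gravitating vortices whose only singularities are the conical points of $\omega$; then pull everything back by $\pi$ and observe that a cone of angle $2\pi/n_j$ is turned into an ordinary smooth point by the $n_j$-fold ramification. For the first stage I would follow the construction recalled in the introduction: set $L'=L\otimes\prod_k L_{[r_k]}^{\otimes m_k}$, $h'=h\otimes\prod_k h_k^{\otimes m_k}$ and $\bm\phi'=\bm\phi\otimes\prod_k\bm t_k^{\otimes m_k}$, where $h_k$ is the singular Hermitian metric on $L_{[r_k]}$ with $iF_{h_k}=2\pi[r_k]$ and $|\bm t_k|_{h_k}^2\equiv 1$. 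Since $|\bm\phi'|_{h'}^2=|\bm\phi|_h^2$ and $iF_{h'}=iF_h+2\pi\sum_k m_k[r_k]$, the hypothesis $\alpha_k=m_k$ turns the first equation of \eqref{eq:cpGV} for $(\omega,h)$ into $iF_{h'}+\tfrac12(|\bm\phi'|_{h'}^2-\tau)\omega=0$, while the second equation is unchanged because only $|\bm\phi|_h^2=|\bm\phi'|_{h'}^2$ enters it. The parabolic poles of $h$ are exactly cancelled by the factors $h_k^{\otimes m_k}$, so $h'$ is a bounded H\"older Hermitian metric on $L'$; hence $(\omega,h')$ is a singular gravitating vortices on $(\Sigma,L')$ with Higgs field $\bm\phi'$ whose only singularities are the conical points $q_j$ of angle $2\pi/n_j$.

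For the second stage, let $R=\pi^{-1}(\{q_1,\dots,q_M\})$. On $\Sigma\setminus\{q_1,\dots,q_M\}$ the pair $(\omega,h')$ satisfies the smooth gravitating vortex equations \eqref{eq:smoothGV} with $c=\tilde c$, the distribution terms being supported at the $q_j$. Since $\pi$ restricts to an unramified holomorphic covering $\Sigma'\setminus R\to\Sigma\setminus\{q_1,\dots,q_M\}$, hence a local biholomorphism, and since $\pi^*$ commutes with $\partial\bar\partial$, with the curvature of the pulled-back Chern connection, with $\text{Ric}$, and with pointwise Hermitian norms, the triple $(\pi^*\omega,\pi^*h',\pi^*\bm\phi')$ --- a K\"ahler metric, a Hermitian metric on $\pi^*L'$, and a holomorphic section that is nonzero since $\pi$ is surjective --- solves \eqref{eq:smoothGV} on $\Sigma'\setminus R$ with the same constant $\tilde c$.

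It remains to cross $R$, which is the heart of the argument. Around $q'\in\pi^{-1}(q_j)$ I would take a holomorphic coordinate $w$ with $\pi$ given by $z=w^{n_j}$ in a coordinate $z$ centered at $q_j$. Writing the conical structure as $\omega=\digamma\,|\bm s_j|^{2(1/n_j-1)}\omega_0$ with $\digamma$ positive H\"older and $|\bm s_j|^2$ comparable to $|z|^2$ with smooth ratio, the pullback contributes a factor $|w|^{2(1-n_j)}$ from $|\bm s_j|^{2(1/n_j-1)}$ and a factor $n_j^2|w|^{2(n_j-1)}$ from the Jacobian of $\pi^*\omega_0$; these powers cancel, so $\pi^*\omega$ extends across $q'$ to a positive H\"older $(1,1)$-form, i.e.\ a K\"ahler metric on $\Sigma'$ bounded above and below near $q'$. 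Likewise $h'$ is nonvanishing and H\"older near $q_j$, so $\pi^*h'$ extends to a H\"older Hermitian metric on $\pi^*L'$, and $\pi^*\bm\phi'$ is holomorphic. To promote \eqref{eq:smoothGV} from $\Sigma'\setminus R$ to all of $\Sigma'$, I would note that the local potentials $\log(\text{density of }\pi^*h')$ and $\log(\text{density of }\pi^*\omega)+2\alpha|\pi^*\bm\phi'|^2$ are bounded and that their $i\partial\bar\partial$ coincide on $\Sigma'\setminus R$ with bounded H\"older forms supplied by \eqref{eq:smoothGV}; the classical removable-singularity theorem (a bounded function whose distributional Laplacian is a bounded form across an isolated point carries no atomic mass there) then shows that \eqref{eq:smoothGV} holds as an identity of currents on all of $\Sigma'$, with no spurious term at $R$. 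A standard Schauder bootstrap --- as in the smooth theory of \cite{Al-Ga-Ga2} and the regularity arguments already used in this paper --- then upgrades $(\pi^*\omega,\pi^*h')$ to a genuine smooth solution; and Riemann--Hurwitz, which gives $\chi(\Sigma')=\deg(\pi)\,\tilde\chi$ and $\deg(\pi^*L')=\deg(\pi)\,\tilde N$, confirms that $\tilde c$ is precisely the constant attached to a gravitating vortex on $\Sigma'$. I expect this last step --- checking that the pulled-back equations extend across $R$ without creating distributional mass, so that the problem on $\Sigma'$ is genuinely a smooth elliptic system --- to be the main obstacle; the rest is bookkeeping.
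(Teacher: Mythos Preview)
Your proposal is correct and follows essentially the same route as the paper: absorb the parabolic singularities into the line bundle, check via the local model $z=w^{n_j}$ that the cone angles $2\pi/n_j$ pull back to nondegenerate H\"older metrics, and then invoke elliptic regularity to upgrade to smoothness. The paper packages the last step slightly more directly by writing $\pi^*\omega=\omega_0'+2i\partial\bar\partial u'$ and $\pi^*h'=h_0'e^{2f'}$ against smooth background metrics on $\Sigma'$, so that $(u',f')$ are globally H\"older and satisfy the semilinear elliptic system \eqref{eq:cpGV-PDE} with smooth coefficients away from finitely many points---this sidesteps your explicit removable-singularity discussion and feeds straight into the bootstrap.
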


	\begin{proof}
		Obviously, for each $q_j$ where $\omega\sim \sqrt{-1}|z|^{2\beta_j-2}\mathrm{d}z\wedge\mathrm{d}\bar z$ and the branch map $\pi$ is modeled on the map $\pi(w)=w^{n_j}$, we have  $\pi^*\omega\sim \sqrt{-1}\mathrm{d}w\wedge\mathrm{d}\bar w$.  Thus, $\omega'=\pi^*\omega$ is a H\"older continuous K\"ahler metric on $\Sigma'$ which is bounded above and below by a background smooth K\"ahler metric $\tilde\omega_0$. The Hermitian metric $h'$ upstairs is a H\"older continuous non-degenerated Hermitian metric on $L'$. Moreover, $(\omega', h')$ satisfy the equation 
		\begin{equation}\label{eq:pull-back-cpGV}
		\left\{
		\begin{array}{rcl}
		iF_{h'} + \frac{1}{2}(|\bm\phi'|^2_{h'} - \tau)\omega' & = & 0\\
		\text{Ric } \omega' - 2\alpha i\partial\bar\partial |\bm\phi'|_{h'}^2 + \alpha\tau (|\bm\phi'|_{h'}^2 -\tau)\omega' & =& \tilde c\omega'
		\end{array}
		\right.
		\end{equation}
		on $\Sigma'$. If we write $\omega'=\omega_0'+2i\partial\bar\partial u'$ and $h'= h_0' e^{2f'}$ with $\omega_0'$ and $h_0'$ the background smooth constant curvature metrics, then $(u',f')$ satisfy the PDE system:
		\begin{equation}
		\begin{split}
		\Delta_{\omega_0'}  f'
		+
		\frac{1}{2} ( |\bm\phi'|_{h'}^2 -\tau ) (1-\Delta_{\omega_0'} u')
		& = -\tilde N\\
		\Delta_{\omega_0'} u' + e^{4\alpha\tau  f' - 2\alpha |\bm\phi'|_{h'}^2 - 2\tilde c u'} 
		& = 1
		\end{split}
		\end{equation}
		on $\Sigma'\backslash \pi^{-1} \left( \{q_1, \cdots, q_M, r_1, \cdots, r_S \} \right)$. 
        Standard elliptic regularity and bootstrapping arguments show that $u'$ and $f'$ are actually smooth on $\Sigma'$. 
		
	\end{proof}

\section{Singular Einstein-Bogomol'nyi equations}\label{sect:Bogmol'nyi-phase}

The case $\tilde c=0$, equivalently when the symmetry breaking parameter $\tau=\frac{\tilde \chi}{2\alpha \tilde N}$,  which is called Bogomol'nyi phase,  has particular interests in physics since it leads to non-vacuum spacetime satisfying  Einstein Field Equation with singularity and with pure magnetic field.  We call this system \emph{singular Einstein-Bogomol'nyi equations}.  This case is treated separately since the continuity method deforming coupling constant $\alpha$ does not work, the openness fails due to the nontrivial kernal of $D\wp $ at $\hat\alpha=0$ and the $C^0$ estimate is not clear along the path. 

%

The two equations in \eqref{eq:cpGV} can be combied into one elliptic PDE with a peculiar-looking nonlinear term:

\begin{equation}
\Delta \tilde f
+ 
\frac{1}{2} \lambda\left(
e^{2\tilde f 
	+\log |\bm \phi|^2
	+ \sum_k \alpha_k \log |\bm t_k|^2 }
- \tau \right)
e^{4\alpha\tau \tilde f - 2\alpha 
	e^{2\tilde f 
		+\log |\bm \phi|^2
		+ \sum_k \alpha_k \log |\bm t_k|^2 }
	- \sum_j (1-\beta_j)  \log |\bm s_j|^2}  
=
-\frac{2\pi \tilde N}{\text{Vol}_{g_0}}
\end{equation}
in which $\lambda$ is an indefinite positive constant. Notice that our $\Delta=\Delta_{g_0}$ is precisely the \emph{negative} of the Laplacian used by Yang \cite{Yang, HS}.  

When the singularities are absent, Yang used sup/subsolution method to find solution of this equation, cf. \cite[Equation (36)]{Yang} (see also \cite{HS}). We will show that Yang's method directly carries over to the current singular setting. The sets
\begin{align}
\mathcal{Z}
& =
\{(p_i,n_i)|p_i \text{ is a zero of }\bm\phi \text{ with multiplicity }n_i, i=1,2,\cdots, Z\}\\
\mathcal{C} 
& = 
\{ (q_j, 1-\beta_j)| j=1,2,\cdots, M\}\\
\mathcal{P}
& = 
\{
(r_k, \alpha_k)| k=1,2,\cdots, S
\}
\end{align}
are intended to represent the zeros of the Higgs field with corresponding multiplicities, the conical points of the K\"ahler metric with corresponding angle defects and the parabolic points of the Hermitian metric with corresponding degrees respectively.

 Write 
\[
u_0 
=
\log |\bm \phi|^2 + \sum_k \alpha_k \log |\bm t_k|^2,
\]
then the equation is written as 
\begin{equation}
\label{eq:singular-Bogomolnyi}
\begin{split}
\Delta \tilde f 
+ 
\frac{1}{2} \lambda e^{-\left( 2\alpha\tau u_0 + \sum_j (1-\beta_j) \log |\bm s_j|^2 \right) } F( 2\tilde f + u_0 ) 
& =
- \frac{2\pi \tilde N}{\text{Vol}_{g_0}}
\end{split}
\end{equation}
where $F(t)= e^{2\alpha\tau t -2\alpha e^t} (e^t - \tau)$. Suppose we  can find a solution to \eqref{eq:singular-Bogomolnyi} for some $\lambda>0$, then the conformal metric
\begin{align}
\label{def:conformal-metric}
g
=
\lambda \frac{e^{4\alpha\tau \tilde f - 2\alpha |\bm \phi|_h^2}}{\prod_j |\bm s_j|^{2(1-\beta_j)}}  g_0
\end{align}
where $|\bm \phi|_h^2=e^{2\tilde f+u_0} $ together with the Hermitian metric 
\begin{align}
\label{def:conformal-Hermitian}
h
=
h_0 \prod_k |\bm t_k|^{2\alpha_k} e^{2\tilde f}
\end{align}
satisfy the \emph{singular Einstein-Bogomol'nyi equations}:
\begin{equation}
\label{eq:Bogomolnyi-phase}
\begin{split}
iF_h 
+ 
\frac{1}{2} (|\bm \phi|_h^2 - \tau )\text{dvol}_g  
& =
-2\pi \sum_k \alpha_k [r_k]\\
S_g
+ 
\alpha (\Delta_g + \tau )(|\bm \phi|_h^2 - \tau) 
& = 2\pi \sum_j (1-\beta_j)\delta_{q_j}.
\end{split}
\end{equation}

Let us try to solve a regularization of Equation \eqref{eq:singular-Bogomolnyi} first. Let $u_0^\delta= \log (|\bm \phi|^2 + \delta) 
+ 
\sum_k \alpha_k \log (|\bm t_k|^2 + \delta)$, then the aim is to find solution $\tilde f$ to

\begin{equation}
\label{eq:combined}
\Delta \tilde f 
+ 
\frac{1}{2} \lambda e^{-\left( 2\alpha\tau u_0^\delta + \sum_j (1-\beta_j) \log \left( |\bm s_j|^2 + \delta\right) \right) } F( 2\tilde f + u_0^\delta ) 
=
- \frac{2\pi \tilde N}{\text{Vol}_{g_0}}
\end{equation}
for all $\delta\in (0,1)$ for some fixed $\lambda>0$. 

Let $v_0^\delta=2\alpha\tau u_0^\delta + \sum_j  (1-\beta_j) \log (|\bm s_j|^2 + \delta)$ be a regularization of $v_0=2\alpha\tau u_0 + \sum_j  (1-\beta_j)\log |\bm s_j|^2$ and 
$C_\delta
: =
1+\lambda \sup_{x\in \Sigma} \{  e^{-v_0^\delta(x)} \}\sup_{t\in \mathbb{R}} F'(t)$, and define a sequence of functions $\{\tilde f_n\}_{n=1,2,\cdots}$ iteratively by 

\begin{equation}
\label{eq:iterative}
\begin{split}
(\Delta + C_\delta) \tilde f_n
& =
-\frac{1}{2}\lambda e^{-v_0^\delta} F(2 \tilde f_{n-1}+u_0^\delta)
+ 
C_\delta \tilde f_{n-1}
- \frac{2\pi \tilde N}{\text{Vol}_{g_0}}\\
2\tilde f_1
& := 
- u_0^\delta + \log \tau.
\end{split}
\end{equation}
Notice that $\tilde f_n$ could be solved uniquely since $\Delta +C_\delta$ is a positive operator, and $\tilde f_n$ are all smooth on $\Sigma$ by elliptic regularity. Let $\Psi_\sigma$ be a $C^\infty $ cut-off function supported in $\cup_{x\in \mathcal{S}} B_{g_0}(x,2\sigma)$ and equals to $1$ on $\cup_{x\in \mathcal{S}} B_{g_0}(x, \sigma)$ where $\mathcal{S}=\mathcal{Z}\cup \mathcal{C}\cup \mathcal{P}$. Let $w$ solve
\[
\Delta w
=
-\frac{4\pi\tilde N}{\text{Vol}_{g_0}} \Psi_\sigma + C(\sigma)
\]
where $C(\sigma) = \frac{4\pi\tilde N}{\text{Vol}_{g_0}^2}\int_\Sigma \Psi_\sigma \text{dvol}_{g_0}$. If $\sigma $ is chosen sufficiently small, then $C(\sigma)>0$ is sufficiently small and ( by subtracting a big constant from $w$ to make $2w+u_0^\delta<\log \tau$ for all $\delta\in (0,1)$) there holds on $\cup_{x\in S} B_{g_0}(x, \sigma)$
\begin{equation}
\label{eq:supsolution}
\Delta w
< 
- \frac{ 2\pi\tilde N}{\text{Vol}_{g_0}}
- 
\frac{1}{2} \lambda e^{-v_0^\delta} F(2w+u_0^\delta) 
\end{equation}
for any $\lambda>0$ and $\delta\in (0,1)$. With $w$ and $\sigma$ chosen as above, we choose $\lambda>0$ sufficiently large such that 
\[
- 
\frac{1}{2}\lambda e^{-v_0^\delta} F(2w+u_0^\delta) 
=
\frac{1}{2}\lambda (\tau - e^{2w+u_0^\delta}) e^{-v_0^\delta}
\geq 
\frac{1}{2}\lambda (\tau - e^{2w+u_0^\delta}) e^{-v_0^1}>
\frac{2\pi \tilde N}{\text{Vol}_{g_0}} + \Delta w
\]
on $\Sigma\backslash \cup_{x\in \mathcal{S}} B_{g_0}(x, \sigma)$ for all $\delta \in (0,1)$. Therefore, the inequality \eqref{eq:supsolution} is satisfied on whole $\Sigma$, meaning that we have constructed a supsolution $w$ to Equation \eqref{eq:combined}. 

\begin{proposition}
	For any $\delta\in (0,1)$, the sequence of functions $\tilde f_n$ satisfies 
	\[
	\tilde f_1>  \tilde f_2> \cdots> \tilde f_n> \cdots >  w	
	\]
\end{proposition}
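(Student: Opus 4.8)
The plan is the classical monotone sub/supersolution iteration, with $w$ playing the role of a subsolution and $\tilde f_1$ that of a supersolution of the fixed-point problem for the iteration map. Write
\[
\mathcal R(g):=-\tfrac12\lambda e^{-v_0^\delta}F(2g+u_0^\delta)+C_\delta g-\frac{2\pi\tilde N}{\text{Vol}_{g_0}},
\]
so that \eqref{eq:iterative} reads $(\Delta+C_\delta)\tilde f_n=\mathcal R(\tilde f_{n-1})$, i.e.\ $\tilde f_n=T\tilde f_{n-1}$ with $T:=(\Delta+C_\delta)^{-1}\circ\mathcal R$, which is well defined because $\Delta+C_\delta$ is a positive elliptic operator. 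The first point is that $T$ is monotone: the nonlinearity $g\mapsto\mathcal R(g)$ is, pointwise, strictly increasing in the value of $g$, with derivative $C_\delta-\lambda e^{-v_0^\delta}F'(2g+u_0^\delta)\ge C_\delta-\lambda\sup_\Sigma e^{-v_0^\delta}\,\sup_{\mathbb R}F'=1>0$ (this uses $\alpha>0$, so that $F$ and $F'$ decay at $\pm\infty$ and $\sup_{\mathbb R}F'<\infty$, and it is precisely the reason for the choice of $C_\delta$); hence $g_1\ge g_2$ pointwise implies $\mathcal R(g_1)\ge\mathcal R(g_2)$ pointwise, and the maximum principle for $\Delta+C_\delta$ gives $Tg_1\ge Tg_2$, with strict inequality everywhere whenever $g_1>g_2$ everywhere.

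Next I would verify the two endpoint inequalities $w<Tw$ and $\tilde f_1>T\tilde f_1$. Since the supersolution inequality \eqref{eq:supsolution} holds on all of $\Sigma$ for the chosen $\sigma,\lambda$, adding $C_\delta w$ to both sides gives $(\Delta+C_\delta)w<\mathcal R(w)=(\Delta+C_\delta)(Tw)$, hence $(\Delta+C_\delta)(Tw-w)>0$ and $Tw>w$ on $\Sigma$. For $\tilde f_1$: from $2\tilde f_1+u_0^\delta=\log\tau$ we have $F(2\tilde f_1+u_0^\delta)=F(\log\tau)=0$, so $\mathcal R(\tilde f_1)=C_\delta\tilde f_1-\frac{2\pi\tilde N}{\text{Vol}_{g_0}}$ and therefore
\[
(\Delta+C_\delta)(\tilde f_1-T\tilde f_1)=(\Delta+C_\delta)\tilde f_1-\mathcal R(\tilde f_1)=\Delta\tilde f_1+\frac{2\pi\tilde N}{\text{Vol}_{g_0}}=-\tfrac12\Delta u_0^\delta+\frac{2\pi\tilde N}{\text{Vol}_{g_0}}.
\]
The only real input here is the curvature bound $\Delta u_0^\delta\le\frac{4\pi\tilde N}{\text{Vol}_{g_0}}$: applying the log-norm (Griffiths) inequality to $(\bm\phi,\sqrt\delta)$ and $(\bm t_k,\sqrt\delta)$ viewed as holomorphic sections of $L\oplus\mathcal O$ and $L_{[r_k]}\oplus\mathcal O$ with the direct-sum metrics, one gets $\sqrt{-1}\partial\bar\partial\log(|s|^2+\delta)\ge-\tfrac{|s|^2}{|s|^2+\delta}\,iF\ge-iF$ since $iF_{h_0}=\frac{2\pi N}{\text{Vol}_{g_0}}\omega_0\ge0$ and $iF_{h_k}=\frac{2\pi}{\text{Vol}_{g_0}}\omega_0\ge0$; using $\Delta=-2\Lambda_{\omega_0}\sqrt{-1}\partial\bar\partial$ this yields $\Delta\log(|\bm\phi|^2_{h_0}+\delta)\le\frac{4\pi N}{\text{Vol}_{g_0}}$ and $\Delta\log(|\bm t_k|^2_{h_k}+\delta)\le\frac{4\pi}{\text{Vol}_{g_0}}$, hence $\Delta u_0^\delta\le\frac{4\pi\tilde N}{\text{Vol}_{g_0}}$. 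Thus $(\Delta+C_\delta)(\tilde f_1-T\tilde f_1)\ge0$, and since $\int_\Sigma\Delta u_0^\delta\,\omega_0=0<4\pi\tilde N$ this quantity is strictly positive on a nonempty open set, so the maximum principle forces $\tilde f_1>T\tilde f_1=\tilde f_2$ on $\Sigma$.

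Finally I would close the induction. Applying $T$ to $\tilde f_1>\tilde f_2$ gives $\tilde f_2=T\tilde f_1>T\tilde f_2=\tilde f_3$, and more generally, if $\tilde f_n>\tilde f_{n+1}$ everywhere then $\mathcal R(\tilde f_{n+1})<\mathcal R(\tilde f_n)$ everywhere, so $(\Delta+C_\delta)(\tilde f_{n+2}-\tilde f_{n+1})<0$ everywhere and $\tilde f_{n+2}<\tilde f_{n+1}$ everywhere. For the lower bound, $2w+u_0^\delta<\log\tau=2\tilde f_1+u_0^\delta$ gives $w<\tilde f_1$, and if $w<\tilde f_n$ then $Tw\le T\tilde f_n=\tilde f_{n+1}$, hence $w<Tw\le\tilde f_{n+1}$. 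Combining, $\tilde f_1>\tilde f_2>\cdots>\tilde f_n>\cdots>w$. The only ingredient that is not a direct application of the maximum principle is the verification that $\tilde f_1$ is a supersolution, i.e.\ the estimate $\Delta u_0^\delta\le\frac{4\pi\tilde N}{\text{Vol}_{g_0}}$; I expect that to be the crux, the rest being a routine monotone iteration.
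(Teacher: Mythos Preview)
Your proof is correct and follows essentially the same monotone iteration scheme as the paper. The only cosmetic difference is packaging: you phrase things via the operator $T=(\Delta+C_\delta)^{-1}\circ\mathcal R$ and its monotonicity, while the paper subtracts the iterative equations and invokes the mean value theorem for $F$ at each step; these are the same computation. For the key step $\tilde f_1>\tilde f_2$, your Griffiths-type bound $\Delta u_0^\delta\le \frac{4\pi\tilde N}{\text{Vol}_{g_0}}$ is exactly what drops out of the paper's explicit formula for $\Delta\tilde f_1$ (the gradient terms you discard are the paper's positive remainder), and in fact that formula shows $(\Delta+C_\delta)(\tilde f_1-\tilde f_2)>0$ \emph{everywhere}, so you do not even need to pass through the strong maximum principle. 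Your identification of this supersolution verification as the crux is accurate; the rest is indeed routine.
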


\begin{proof}
	By a direct calcuation
	\begin{equation*}
	\Delta \tilde f_1
	=
	-  \frac{2\pi N}{\text{Vol}_{g_0}} \frac{|\bm \phi|^2}{|\bm \phi|^2+\delta}
	- 
	\sum_k \frac{2\pi\alpha_k }{\text{Vol}_{g_0}} \frac{|\bm t_k|^2}{|\bm t_k|^2 + \delta} 
	+ 
	\frac{\delta|\bm\phi|^2}{(|\bm \phi|^2+\delta)^2} \left|\nabla^{1,0}\log |\bm \phi|^2\right| 
	+
	\sum_k \alpha_k \frac{\delta|\bm t_k|^2}{(|\bm t_k|^2+\delta)^2} \left|\nabla^{1,0}\log |\bm t_k|^2\right|
	\end{equation*}
The function $\tilde f_2-\tilde f_1$ satisfies 
	\begin{equation}
	\begin{split}
	\left( \Delta + C_\delta \right) \left( \tilde f_2-\tilde f_1\right)
	& =
	-  \frac{2\pi N}{\text{Vol}_{g_0}} \frac{\delta}{|\bm \phi|^2+\delta}
	- 
	\sum_k \frac{2\pi\alpha_k }{\text{Vol}_{g_0}} \frac{\delta}{|\bm t_k|^2 + \delta} \\
   &\qquad	-
	\frac{\delta|\bm\phi|^2}{(|\bm \phi|^2+\delta)^2} \left|\nabla^{1,0}\log |\bm \phi|^2\right| 
	-
	\sum_k \alpha_k \frac{\delta|\bm t_k|^2}{(|\bm t_k|^2+\delta)^2} \left|\nabla^{1,0}\log |\bm t_k|^2\right|\\
	& <0
	\end{split}
	\end{equation}
	The maximum principle concludes that $
	\tilde f_2 -\tilde f_1<0
	$ on $\Sigma$. On the other hand, there holds away from $\mathcal{Z}\cup \mathcal{P}$ that
	\begin{equation}
	\begin{split}
	(\Delta + C_\delta) \left(
	w-\tilde f_2\right)
	&< 
	C_\delta (w-\tilde f_1)
	-\frac{1}{2} \lambda e^{-v_0^\delta} \left(  
	F(2w+u_0^\delta) - F(2\tilde f_1 + u_0^\delta)\right)\\
	& = 
	\left( 
	C_\delta - \lambda e^{-v_0^\delta} F'(u_0^\delta+\xi) 
	\right) \left( w-\tilde f_1 \right)\\
	& <0
	\end{split}
	\end{equation}
	where $\xi$ is a function on $\Sigma$. The maximum principle implies that 
	\[
	w< \tilde f_2. 
	\]
	The claimed inequality follows inductively. Precisely speaking, the maximum principle applied to 
	\begin{equation}
	\begin{split}
	(\Delta + C_\delta)\left(
	\tilde f_{n+1}- \tilde f_n\right)
	& =
	\left(C_\delta -  \lambda e^{-v_0^\delta}F'(u_0+\kappa_n)
	\right) \left( \tilde f_{n} - \tilde f_{n-1} \right)\\
	(\Delta + C_\delta) \left(
	w-\tilde f_{n+1}\right)
	&< 
	\left( 
	C_\delta -\lambda e^{-v_0^\delta} F'(u_0+\xi_n) 
	\right) \left( w-\tilde f_n \right)\\
	\end{split}
	\end{equation}
	where $\kappa_n$ and $\xi_n$ are functions on $\Sigma$ gives $\tilde f_1>\tilde f_2>\cdots >\tilde f_n>\cdots >w$ on $\Sigma$. 
\end{proof}

This proposition implies $|\!|\tilde f_{n-1}|\!|_{L^\infty(\Sigma)}$ are uniformly bounded (when $\delta\in (0,1)$ is fixed) and thus by Equation \eqref{eq:iterative} the $C^{1,\gamma}$ norm of $\tilde f_n$ are uniformly bounded for any $\gamma\in (0,1)$. The RHS of the equation 
has a uniform $C^{\gamma}$ bound and therefore the $C^{2,\gamma}$ norm of $\tilde f_n$ are uniformly bounded for any $\gamma\in (0, 1)$.  As a consequence, the sequence uniformly converges as $n\to +\infty$ to a $C^{2,\gamma}$ function $\tilde f^\delta$ solving the equation \eqref{eq:combined} on $\Sigma$. Notice that $\tilde f^\delta$ a priori depends on $\Psi_\sigma, w$ and $\lambda$. 

Looking back to the equation 
\begin{equation}
\label{eq:combined-re}
\Delta \tilde f^\delta 
+ 
\frac{1}{2}\lambda e^{-v_0^\delta } F( 2\tilde f^\delta + u_0^\delta ) 
=
- \frac{2\pi \tilde N}{\text{Vol}_{g_0}}
\end{equation}
and the above construction shows that 
\begin{equation}
w
\leq 
\tilde f^\delta
<
 \tilde f_1
 =- \frac{1}{2}u_0^\delta+\frac{1}{2}\log \tau\;\;,\;\; \forall \delta\in (0,1),
\end{equation}
which in particular implies that for any $q>1$, 
\begin{equation}
|\!|\tilde f^\delta|\!|_{L^q(\Sigma)}
\leq 
C
\end{equation}
for some $C$ independent of $\delta\in (0,1)$. 
Denote   ({\bfseries{A}}) for the analytical assumption:
\begin{equation}
\exists\; p>1, \text{ s.t. } 
e^{-v_0}\in L^p(\Sigma). 
\end{equation}
This assumption implies 
\begin{equation}
\label{ass:Lp-bounded}
\text{$\exists\; C>0$, $p>1$, s.t. } 
|\!| e^{-v_0^\delta} |\!|_{L^p(\Sigma)}
\leq 
C\;\;, \;\;\forall \delta\in (0,1).
\end{equation}
Since $F$ is uniformly bounded on $\mathbb{R}$, \eqref{ass:Lp-bounded} implies there is a uniform constant $C$ independent of $\delta\in (0,1)$ such that 
\begin{equation}
|\!|\tilde f^\delta|\!|_{W^{2,p}}\leq C
\end{equation}
by Equation \eqref{eq:combined-re}. The Sobolev embedding then implies that 
\begin{equation}
|\!| \tilde f^\delta |\!|_{C^{\gamma}}
\leq C
\end{equation}
for a $\delta$-independent constant $C$ and $\gamma\in (0,1)$. The Equation \eqref{eq:combined-re} is then rewritten as 
\begin{equation}
\Delta \tilde f^\delta
=
- \frac{\lambda}{2} \frac{1}{\prod_j (|\bm s_j|^2+\delta)^{1-\beta_j}} 
e^{4\alpha\tau \tilde f^\delta - 2\alpha e^{2\tilde f^\delta+ u_0^\delta}} (e^{2\tilde f^\delta + u_0^\delta}-\tau)
- 
\frac{2\pi \tilde N}{\text{Vol}_{g_0}}
\end{equation}
whose RHS is easily seen to be uniformly bounded in $L^{p'}$ for some $p'>1$. The consequence is that 
\begin{equation}
|\!|\tilde f^\delta|\!|_{C^{1,\gamma'}(\Sigma)}
\leq 
C
\end{equation}
for some $C$ independent of $\delta\in (0,1)$.  On any compact subset $K$ away from $\mathcal{S}$ we have $\Delta \tilde f^\delta$ is uniformly bounded in $C^{\gamma'}(K)$ and therefore $\tilde f^\delta$ is bounded in $C^{2,\gamma'}(K)$. By taking  an exhaustion of $\Sigma\backslash \mathcal{S}$ and taking a diagonal subsequence as $\delta\to 0$, we have a limit function $\tilde f$ in $C^2_{loc}(\Sigma\backslash \mathcal{S})\cap C^{1,\gamma'}(\Sigma)$ solving the Equation \eqref{eq:singular-Bogomolnyi}. 

We divide the singularity set $\mathcal{S}$ into seven groups according to the Venn diagram of the three sets $A=\mathcal{Z}$, $B=\mathcal{C}$ and $C=\mathcal{P}$.  The analytical assumption ({\bf{A}}) is satisfied if and only if the the numerical assumption ({\bf{N}}) about any singular point $x$ is satisfied for $x$ in the corresponding part of the Venn diagram  (Figure 1), for instance if $x\in (\mathcal{Z}\cap \mathcal{C})\backslash \mathcal{P}$ saying $x=p_k=q_k$ then we should have $4\alpha\tau n_k +2(1-\beta_k) <2$.  We expect there is a stability explanation to these numerical assumptions as what happens in the smooth case \cite{Al-Ga-Ga-P, FPY}. 

\begin{theorem}[Existence of singular cosmic strings]
	\label{thm:singular-EB}
	Let $L$ be a holomorphic line bundle on $\mathbb{P}^1$, suppose $c_1(L)=N$ and $\bm\phi$ is a holomorphic section of $L$ with the zero set $\mathcal{Z}$. Let $\mathcal{C}, \mathcal{P}$ be two subsets with degrees, such that the numerical assumption ({\bf{N}}) is satisfied, then there exists singular gravitating vortex solution to the Equation \eqref{eq:cpGV} when the coupling constant and symmetry breaking parameter satisfies $\alpha\tau = \frac{\tilde \chi}{2\tilde N}= \frac{2-\sum_j (1-\beta_j)}{2(N+\sum_k \alpha_k)}$.  
\end{theorem}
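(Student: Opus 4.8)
The plan is to reduce the singular Einstein--Bogomol'nyi system to the single scalar equation \eqref{eq:singular-Bogomolnyi} and to solve that equation by the sub/supersolution scheme set up in the lines preceding the statement. On $\Sigma=\mathbb{P}^1$ one has $\chi(\Sigma)=2$, so the hypothesis $\alpha\tau=\tilde\chi/(2\tilde N)=\bigl(2-\sum_j(1-\beta_j)\bigr)/\bigl(2(N+\sum_k\alpha_k)\bigr)$ is precisely the condition $\tilde c=0$; in particular it forces $\tilde\chi>0$. Under $\tilde c=0$ the two equations of \eqref{eq:cpGV} combine into \eqref{eq:singular-Bogomolnyi} for the reduced potential $\tilde f$, with an undetermined positive constant $\lambda$ (whose value is free, so the resulting K\"ahler class is not prescribed, in agreement with the remark following \eqref{eq:cpGV-PDE}), and conversely any $\tilde f$ solving \eqref{eq:singular-Bogomolnyi} for some $\lambda>0$ produces, via the conformal formulas \eqref{def:conformal-metric} and \eqref{def:conformal-Hermitian}, a pair $(\omega,h)$ solving \eqref{eq:Bogomolnyi-phase}, i.e.\ \eqref{eq:cpGV} with $\tilde c=0$. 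So it suffices to construct one solution $\tilde f$ of \eqref{eq:singular-Bogomolnyi} with regularity making these formulas meaningful.

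Next I would verify that the numerical assumption $(\mathbf{N})$ is equivalent to the analytic assumption $(\mathbf{A})$, namely $e^{-v_0}\in L^p(\Sigma)$ for some $p>1$. Reading off the leading logarithmic behaviour of $v_0=2\alpha\tau u_0+\sum_j(1-\beta_j)\log|\bm s_j|^2$, with $u_0=\log|\bm\phi|^2+\sum_k\alpha_k\log|\bm t_k|^2$, at each point of $\mathcal{S}=\mathcal{Z}\cup\mathcal{C}\cup\mathcal{P}$, the exponent of $|z|$ in $e^{-v_0}$ near a point lying in a given cell of the Venn diagram is exactly the combination entering $(\mathbf{N})$ for that cell (for instance $-(4\alpha\tau n_k+2(1-\beta_k))$ at $x=p_k=q_k\notin\mathcal{P}$); local $L^p$ integrability for some $p>1$ holds precisely when that exponent exceeds $-2$, which is the stated inequality. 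Away from $\mathcal{S}$ the function $e^{-v_0}$ is smooth and positive, so $(\mathbf{N})\Leftrightarrow(\mathbf{A})$.

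Then I would carry out the sub/supersolution argument exactly as above: fix a small cutoff scale $\sigma$, solve $\Delta w=-\tfrac{4\pi\tilde N}{\mathrm{Vol}_{g_0}}\Psi_\sigma+C(\sigma)$, and lower $w$ by a constant so that $2w+u_0^1<\log\tau$ (hence $2w+u_0^\delta<\log\tau$ for all $\delta\in(0,1)$); this makes $w$ a supersolution of \eqref{eq:combined} near $\mathcal{S}$ for every $\lambda>0$, and choosing $\lambda$ large, independently of $\delta$, makes \eqref{eq:supsolution} hold on all of $\Sigma$. The monotone iteration \eqref{eq:iterative} started at $\tilde f_1=-\tfrac{1}{2}u_0^\delta+\tfrac{1}{2}\log\tau$ then yields $\tilde f_1>\tilde f_2>\cdots>\tilde f_n>\cdots>w$ by the maximum principle (the Proposition just proved), so $\tilde f_n\downarrow\tilde f^\delta$ solving \eqref{eq:combined}, with the uniform pinch $w\le\tilde f^\delta\le\tilde f_1=-\tfrac{1}{2}u_0^\delta+\tfrac{1}{2}\log\tau$ since $\sigma,w,\lambda$ do not depend on $\delta$. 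Feeding $(\mathbf{A})$ and the boundedness of $F$ on $\mathbb{R}$ into \eqref{eq:combined-re} gives a uniform $W^{2,p}$ bound, hence uniform $C^\gamma$ and then (rewriting \eqref{eq:combined-re} with $e^{-v_0^\delta}$ displayed) uniform $C^{1,\gamma'}(\Sigma)$ bounds on $\tilde f^\delta$, together with uniform $C^2_{\mathrm{loc}}(\Sigma\setminus\mathcal{S})$ bounds; a diagonal subsequence as $\delta\to0$ over an exhaustion of $\Sigma\setminus\mathcal{S}$ produces $\tilde f\in C^2_{\mathrm{loc}}(\Sigma\setminus\mathcal{S})\cap C^{1,\gamma'}(\Sigma)$ solving \eqref{eq:singular-Bogomolnyi}.

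Finally I would translate this back. Since $\tilde f$ is bounded and H\"older on $\Sigma$ and smooth off $\mathcal{S}$, $h=h_0\prod_k|\bm t_k|^{2\alpha_k}e^{2\tilde f}$ is a Hermitian metric on $L$ with parabolic singularity of order $2\alpha_k$ at $r_k$; and $\omega=g=\lambda\,e^{4\alpha\tau\tilde f-2\alpha|\bm\phi|_h^2}\prod_j|\bm s_j|^{-2(1-\beta_j)}g_0$, where $|\bm\phi|_h^2=e^{2\tilde f+u_0}$ satisfies $0\le|\bm\phi|_h^2\le\tau$ in the limit (from the pinch $2\tilde f^\delta+u_0^\delta\le\log\tau$ as $\delta\to0$), has $\prod_j|\bm s_j|^{2(1-\beta_j)}\omega$ positive, bounded and H\"older, so $\omega$ is a K\"ahler metric with conical singularity of angle $2\pi\beta_j$ at $q_j$; thus $(\omega,h)$ is a singular gravitating vortex in the sense of Definition \ref{def:singular-gravitating-vortices}. \emph{The main obstacle} is precisely this last step: one must check that the limit $\tilde f$, known a priori only to solve \eqref{eq:singular-Bogomolnyi} pointwise off $\mathcal{S}$ and to lie in $C^{1,\gamma'}(\Sigma)$, genuinely solves \eqref{eq:cpGV} as an identity of currents, so that the distributional curvature of $h$ and Ricci form of $\omega$ pick up exactly the masses $-2\pi\sum_k\alpha_k[r_k]$ and $2\pi\sum_j(1-\beta_j)[q_j]$ and the cone angles are exactly $2\pi\beta_j$ rather than merely at most that. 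This requires combining the Poincar\'e--Lelong formula with the precise logarithmic pole structure of $u_0$ and a removable-singularity argument across $\mathcal{S}$, using the uniform $C^{1,\gamma'}$ control to exclude additional concentrated mass.
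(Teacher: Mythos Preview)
Your proposal is correct and follows essentially the same route as the paper: reduce \eqref{eq:cpGV} with $\tilde c=0$ to the scalar equation \eqref{eq:singular-Bogomolnyi}, regularize to \eqref{eq:combined}, run Yang's monotone sub/supersolution iteration with the $\delta$-independent barrier $w$ and a large enough $\lambda$, pass to the limit $\delta\to 0$ using the $W^{2,p}$ and $C^{1,\gamma'}$ bounds coming from $(\mathbf{A})$, and read off the equivalence $(\mathbf{A})\Leftrightarrow(\mathbf{N})$ from the local exponents at each cell of the Venn diagram. The ``main obstacle'' you flag --- verifying that the limit satisfies \eqref{eq:cpGV} as an identity of currents across $\mathcal{S}$ --- is indeed glossed over in the paper as well; your suggested remedy via Poincar\'e--Lelong and the $C^{1,\gamma'}$ control on $\tilde f$ is the natural way to close it.
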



\begin{figure}
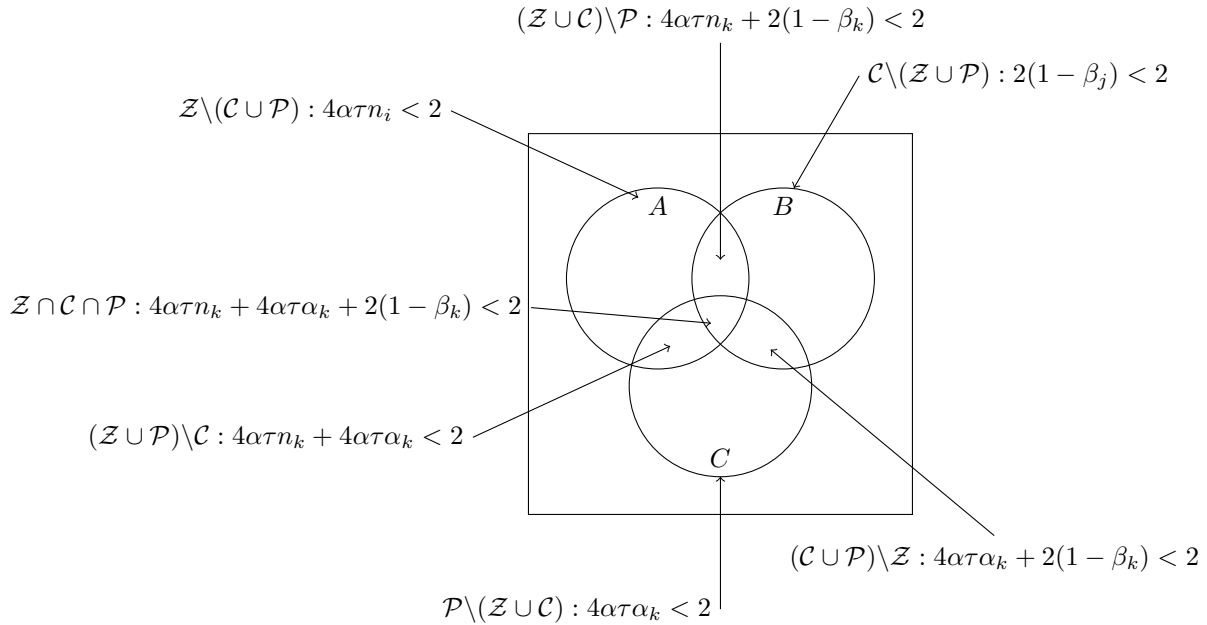

	\label{fig:venn-diagram}
\begin{venndiagram3sets}[labelOnlyA={},labelOnlyB={},labelOnlyC={}, labelOnlyAB={},labelOnlyAC={},labelOnlyBC={},labelABC={}]
	\setpostvennhook
	{
		\draw[<-] (labelA) -- ++(155:3cm) node[left] {$\mathcal{Z}\backslash (\mathcal{C}\cup \mathcal{P}): 4\alpha\tau n_i <2$};
		\draw[<-] (labelB) -- ++(60:2cm) node[right] {$\mathcal{C}\backslash (\mathcal{Z}\cup \mathcal{P}): 2(1-\beta_j)<2$};
		\draw[<-] (labelC) -- ++(-90:2cm) node[left] {$\mathcal{P}\backslash (\mathcal{Z}\cup \mathcal{C}): 4\alpha\tau \alpha_k <2$};
				\draw[<-] (labelOnlyAB) -- ++(90:3cm) node[above] {$(\mathcal{Z}\cup \mathcal{C})\backslash \mathcal{P} : 4\alpha\tau n_k + 2(1-\beta_k)<2$};
						\draw[<-] (labelOnlyBC) -- ++(-40:4cm) node[below] {$(\mathcal{C} \cup \mathcal{P})\backslash \mathcal{Z}: 4\alpha\tau \alpha_k +2(1-\beta_k)<2$};
								\draw[<-] (labelOnlyAC) -- ++(-155:3cm) node[left] {$(\mathcal{Z}\cup \mathcal{P})\backslash \mathcal{C}: 4\alpha\tau n_k + 4\alpha\tau \alpha_k <2$};
		\draw[<-] (labelABC) -- ++(175:2.5cm) node[left]
		{$\mathcal{Z}\cap \mathcal{C}\cap \mathcal{P}: 4\alpha\tau n_k + 4\alpha\tau \alpha_k + 2(1-\beta_k)<2$}; 
	}
\end{venndiagram3sets}
	\caption{Numerical assumption for various group of singularities}
\end{figure}

\begin{remark}
	Even though the solution $g$ to singular Einstein-Bogomol'nyi equations is conformal to the starting smooth metric $g_0$, they are not necessarily with the same volume. The metric resulted from the above construction seems to depend on $\lambda$. Actually, the uniqueness of solution is a subtle issue as indicated by \cite{HS}, and is naturally a quite interesting question thinking of the infinite dimensional moment map picture \cite{Al-Ga-Ga1, Al-Ga-Ga2} and the physical background regarding cosmic strings.   
	\end{remark}

\end{document}